\documentclass[11pt]{amsart}

\usepackage[margin=1.12in]{geometry}
\usepackage{amsmath,amssymb,mathtools}
\usepackage{enumitem}
\usepackage{xcolor}
\usepackage{microtype}
\usepackage[colorlinks=true,linkcolor=blue!55!black,
  citecolor=blue!55!black,urlcolor=blue!55!black]{hyperref}

\numberwithin{equation}{section}

\newtheorem{theorem}{Theorem}[section]
\newtheorem{proposition}[theorem]{Proposition}
\newtheorem{lemma}[theorem]{Lemma}
\newtheorem{corollary}[theorem]{Corollary}
\theoremstyle{definition}
\newtheorem{definition}[theorem]{Definition}
\newtheorem{remark}[theorem]{Remark}

\newcommand{\dd}{\,\mathrm d}
\newcommand{\ddcJ}{\mathrm d\mathrm d_J^c}
\newcommand{\ii}{\sqrt{-1}}

\newcommand{\cM}{\mathcal M}

\newcommand{\cD}{\mathcal D}

\newcommand{\T}{\mathbb T}
\newcommand{\TC}{\T^{\mathbb C}}

\newcommand{\KRS}{\mathrm{KRS}}
\newcommand{\KE}{\mathrm{KE}}
\newcommand{\tr}{\operatorname{tr}}
\newcommand{\Ric}{\operatorname{Ric}}
\newcommand{\Vol}{\operatorname{Vol}}

\newcommand{\Aut}{\operatorname{Aut}}

\makeatletter
\let\gkrf@setauthors\@setauthors
\renewcommand{\@setauthors}{%
  \begingroup
  \let\MakeUppercase\@firstofone
  \gkrf@setauthors
  \endgroup
}
\makeatother

\title[GKRF on toric Fano manifolds]
{Convergence of Generalized K\"ahler--Ricci Flow
on Toric Fano Manifolds}

\author[L. Huang and X. Zhu]{Liding Huang$^{*}$ and Xiaohua Zhu$^{\ddag}$}
		\address{School of Mathematical Sciences, Xiamen University, Xiamen 361005, P. R. China}
	\email{huangliding@xmu.edu.cn}
	\address{School of Mathematical Sciences,  Capital Normal University,  Beijing,  100048, P. R. China }
	\email{xhzhu@math.pku.edu.cn}

	 \thanks {$*$ partially supported  by NSFC 12401075.}
\thanks {$\ddag$ partially supported  by National Key R\&D Program of China   2023YFA1009900  and  NSFC 12271009.}
\subjclass[2000]{Primary: 53E30; Secondary: 53C55, 32Q20, 14M25}

\keywords{generalized K\"ahler--Ricci flow,
toric Fano manifold, K\"ahler--Ricci soliton}

\begin{document}

\begin{abstract}We  first define an extended  Mabuchi  functional  for a class of  generalized  K\"ahler metrics of symplectic type 
 on a  toric Fano manifold $(M,J,\T)$, and prove it is proper.
Then by showing the 
 monotonicity of functional  along the
normalized generalized K\"ahler--Ricci flow for any  initial $\T$-invariant
 generalized  K\"ahler metric with its symplectic form  in  $2\pi c_1(M,J)$, we prove that  the flow  converges to a   K\"ahler--Ricci soliton.   The result  confirms   a  conjecture of Apostolov--Streets--Ustinovskiy.

\end{abstract}

\maketitle

\section{Introduction}

Generalized K\"ahler geometry originated in the work of
Gates--Hull--Ro\v{c}ek on two-dimensional supersymmetric nonlinear
sigma models~\cite{GatesHullRocek}.
Hitchin later introduced generalized complex geometry  \cite{HitchinGC}.
Gualtieri developed this theory and identified
generalized K\"ahler geometry with bi-Hermitian geometry \cite{GualtieriGC,GualtieriGK}.
 

In bi-Hermitian notation, a generalized K\"ahler structure consists
of a Riemannian metric $g$, two
integrable $g$-Hermitian complex structures $I$ and $J$, and a closed
torsion three-form $\mathcal H$, which  satisfy
\[
 d_I^c\omega_I=\mathcal H=-d_J^c\omega_J,
 \qquad d\mathcal H=0.
\]
For a structure of symplectic type, $I+J$ is invertible and, in the
splitting used here, $\mathcal H=db$ for a globally defined two-form
$b$.  The structure is equivalently encoded by a symplectic form $F$
taming $J$:
\[
 I=-F^{-1}J^*F,\qquad -FJ=g+b.
\]

Streets--Tian introduced pluriclosed flow as a non-K\"ahler analogue of
K\"ahler-Ricci (abbreviated as KR)  flow and showed that, after suitable gauge
transformations, it preserves generalized K\"ahler structures~\cite{StreetsTianPCF,StreetsTian}.  This flow can be used to
study canonical metrics on non-K\"ahler manifolds and moduli problems
in generalized K\"ahler geometry
\cite{GualtieriGK,ASU-space}. 
In the fixed-$J$ gauge,   the normalized  flow  was reformulated  by  Apostolov--Streets--Ustinovskiy 
 as a generalized KR flow~\cite{ASU-toric}:
\begin{equation}\label{eq:intro-normalized-GKRF}
 \partial_t\omega_J(t)
 =-2(\rho^B_J)^{1,1}+2\lambda\omega_J,
 \qquad
 \partial_t I
 =\mathcal L_{\theta_I^{\sharp}
                 -\theta_J^{\sharp}}I.
\end{equation}
Here $\rho^B_J$ is the Bismut--Ricci form, $\theta_I,\theta_J$ are
the Lee forms, and $\sharp$ is taken with respect to $g$, in the
conventions of Apostolov--Streets--Ustinovskiy~\cite{ASU-toric}.

 In case that $(M,J)$ is a Fano manifold,  we use the Fano normalization $\lambda=1$ in
\eqref{eq:intro-normalized-GKRF}.  Then  the symplectic form class is preserved    along the flow  if   the   symplectic form $F$ of  initial generalized K\"ahler metric  is chosen in 
$2\pi c_1(M,J)$. In particular,  it reduces to the normalized
KR  flow 
 when $I=J$ and $b=0$.


On a toric Fano manifold $(M^m, J, \T)$, where $\T$ is the $m$-dimensional maximal compact torus, 
we  may consider the normalized generalized KR flow  (\ref{eq:intro-normalized-GKRF}) for   an initial $\T$-invariant
 generalized  K\"ahler metric with its symplectic form  in  $2\pi c_1(M,J)$.
{We write $\mathfrak t=\operatorname{Lie}(\T)$,
$\mathfrak t^*$ for its dual, and $\TC$ for the complexification of $\T$.}
Then \eqref{eq:intro-normalized-GKRF} can be reduced to   the following  parabolic scalar
equation for a family of symplectic potentials $u_t$ on the
corresponding Delzant polytope $P$ of $(M,J)$~\cite{ASU-toric}:
\begin{equation}\label{eq:intro-flow}
 \dot u
 =\log\det(D^2u+\ii B_t)+2u-2x\cdot\nabla u,
 \qquad B_t=e^{-2t}B_0,
\end{equation}
where $B_0\in\wedge^2\mathfrak t$ is constant.  The type-$A$
deformation parameter also satisfies $A_t=e^{-2t}A_0$ and is
decoupled from \eqref{eq:intro-flow}.   Apostolov--Streets--Ustinovskiy have proved  that  there exists  a solution $u=u_t$ of (\ref{eq:intro-flow})  for any $t>0$  \cite{ASU-toric}.  

The data $(u_t,A_t,B_t)$ determine a family of generalized
K\"ahler structures $(g_t,I_t,J_t,b_t)$.  We write $J$ for
the initial complex structure $J_0$.  Let
$\Phi_t:(M,J)\to(M,J_t)$ be the canonical equivariant
biholomorphisms satisfying $\Phi_t^*J_t=J$.  The pulled-back family
\[
 (\widetilde g_t,\widetilde I_t,J,\widetilde b_t)
 :=\Phi_t^*(g_t,I_t,J_t,b_t)
\]
solves \eqref{eq:intro-normalized-GKRF} on $(M,J)$
\cite[Proposition~4.1]{ASU-toric}.
Set $\omega_t:=g_t(J_t\cdot,\cdot)$,
then the Hermitian form on $(M,J)$ 
\[
\tilde{\omega}_t:=\widetilde g_t(J\cdot,\cdot)
 =\Phi_t^*\omega_t,
\]
 is the solution of \eqref{eq:intro-normalized-GKRF}.
Based on the existence of KR solitons on toric
Fano manifolds~\cite{WangZhu}, Apostolov--Streets--Ustinovskiy
also conjectured that $\tilde{\omega}_t$ converges to a KR soliton
after pullbacks by holomorphic automorphisms
\cite[Conjecture~2.12]{ASU-toric}.  We note that in the K\"ahler case,  the convergence of 
normalized KR flow 
from a $\T$-invariant K\"ahler metric 
 on a toric
Fano manifold has been proved 
  by the second author before \cite{Zhu}.

In this paper, we  shall  prove the above conjecture.   Precisely, we have

\begin{theorem}\label{thm:main}
 Let $(M,J,\T)$ be a toric Fano manifold.  Suppose that
 the initial datum is a $\T$-invariant generalized K\"ahler metric 
 of symplectic type whose symplectic form lies in
 $2\pi c_1(M,J)$.  Then  there 
 exist
 a smooth family $\gamma_t\in\TC$ and a KR soliton
 $\omega_{\KRS}$ such that
 \begin{equation}\label{eq:main-KRS-convergence}
  (\gamma_t^*\tilde{\omega}_t,
      \gamma_t^*\widetilde I_t,\gamma_t^*\widetilde b_t)
  \longrightarrow(\omega_{\KRS},J,0)
 \end{equation}
 in $C^\infty$ as $t\to\infty$.  
 Moreover, if the Futaki invariant vanishes, then there
 is a K\"ahler--Einstein metric
 $\omega_{\KE,\infty}$ for which \eqref{eq:main-KRS-convergence}
 holds with $\gamma_t=\operatorname{id}$ and
 $\omega_{\KRS}=\omega_{\KE,\infty}$.  In this case, the convergence
 is exponential in every $C^k$-norm.
\end{theorem}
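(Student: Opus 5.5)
The plan is to work entirely on the level of the scalar flow \eqref{eq:intro-flow} for the symplectic potentials $u_t$ on $P$, following the strategy of \cite{Zhu} for the K\"ahler case. The abstract suggests the main new tool: an extended Mabuchi (modified, soliton-type) functional $\mathcal M_B(u)$ for pairs $(u,B)$. I would build it from the toric Mabuchi functional with the soliton weight $e^{\langle X,x\rangle}$ and replace $\log\det D^2u$ by $\log\det(D^2u+\ii B)$ (its real part, which equals $\log\det D^2u+\log\det(I+(D^2u)^{-1}\ii B)$ with a nonnegative correction). First I would check two things.

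\emph{Monotonicity.} Along \eqref{eq:intro-flow}, $\frac{d}{dt}\mathcal M_{B_t}(u_t)\le -\int_P |\nabla(\dot u-\text{soliton term})|^2\,(\cdots)+O(e^{-2t})$. The error comes from $\partial_t B_t=-2B_t$, so it is integrable in $t$.

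\emph{Properness.} Modulo the affine action (that is, $\TC$ together with normalization), $\mathcal M_{B}(u)\ge \delta\int_P u\,e^{\langle X,x\rangle}dx - C$ for normalized $u$. I would prove this by combining the Wang--Zhu \cite{WangZhu} properness of the modified functional with the fact that the $B$-correction term is bounded below uniformly in $B$ in a compact set.

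Together these give a uniform bound on $\int u_t$ after a $\TC$-normalization $\gamma_t$, which is translation of $\nabla u$ in the Legendre picture.

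From the $L^1$ bound and convexity I would get a $C^0$ bound on the normalized $u_t$, hence on the Legendre dual $\phi_t$ on $\mathbb R^m$. I would then run the standard Kähler--Ricci flow estimates in the complex chart: Perelman-type bounds for the Ricci potential $\dot u$, and a Laplacian/$C^2$ estimate via the maximum principle for $\log\det(D^2u+\ii B_t)$, treating $B_t$ as a decaying perturbation. Evans--Krylov or Calabi-type $C^3$ estimates for the complex operator then give uniform $C^\infty$ bounds on $\gamma_t^*\tilde\omega_t$. The quantities $\gamma_t^*\widetilde I_t$ and $\gamma_t^*\widetilde b_t$ are determined by $(u_t,A_t,B_t)$ with $A_t,B_t=O(e^{-2t})$, so they converge to $J$ and $0$ once the metrics are controlled. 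The monotonicity integral forces the soliton defect to go to zero along a sequence, and hence, by the smooth bounds and uniqueness of KR solitons modulo $\TC$ \cite{WangZhu}, along all times. Subsequential limits are KR solitons of the same soliton vector field. A Łojasiewicz-type or direct uniqueness argument upgrades this to full convergence after choosing $\gamma_t$ smoothly, for instance by fixing the barycenter condition.

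In the Futaki-zero case $X=0$. Here I would use the linearization at the KE metric: the first eigenvalue of the weighted Laplacian exceeds $1$ on non-holomorphic directions, and the $\TC$ directions are removed by the $\T$-invariant normalization. Combined with the $e^{-2t}$ decay of $B_t$, this gives exponential decay of $\dot u$ via Phong--Song--Sturm--Weinkove style arguments, so that $\gamma_t$ stays bounded and can be taken to be the identity, with exponential $C^k$ convergence.

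The main obstacle I expect is the $C^2$ (Laplacian) estimate together with properness uniform in $B$. The operator $\log\det(D^2u+\ii B)$ is not a real Monge--Amp\`ere operator on $P$, so the real Legendre transform does not linearize it. I would first try to work in the pulled-back complex coordinates supplied by $\Phi_t$, where the flow becomes a complex Monge--Amp\`ere flow with a $B$-dependent lower-order perturbation, and apply the maximum principle there.
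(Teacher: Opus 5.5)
\paragraph{The main gap: the $C^0$ estimate.} The step that fails is ``from the $L^1$ bound and convexity I would get a $C^0$ bound on the normalized $u_t$, hence on the Legendre dual.'' Properness and monotonicity give only $\int_P\widehat u_t\,\rho_\xi\dd x\leq C$, and with more work a bound on $\int_{\partial P}\widehat u_t\,dS$. For convex functions this controls $\widehat u_t$ on compact subsets of $P^\circ$, but not up to $\partial P$. For example, take $m=2$ and Delzant coordinates with $P=\{x_1,x_2\geq0\}$ near a vertex. Then $(x_1+x_2)^{-1/2}$ is convex, has finite integrals over $P$ and over $\partial P$, and is unbounded.

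A uniform lower bound for $\varphi_t=u_t^*-\bar\phi_0$ is equivalent to a uniform upper bound for the normalized $u_t$ on all of $P$. So what you actually control is $\varphi_t$ in a bounded subset of $\mathcal E^1_\T$ (see \eqref{eq:toric-finite-energy}--\eqref{eq:toric-AM-energy}), not in $L^\infty$. Correspondingly, the subsequential limit in Proposition~\ref{prop:toric-compactness} is only known to lie in $\mathcal E^1_\T$.

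For the K\"ahler--Ricci flow this step is usually bridged by Perelman's estimates (bounded Ricci potential, diameter and Sobolev constant). No analogue is available along the generalized flow, and your proposal does not supply one. In the paper, $\dot\varphi$ is bounded only on finite windows $[\varepsilon/2,S]$, and only after a $C^0$ bound is in hand (Lemma~\ref{lem:time-derivative-bound}). Consequently, the uniform-in-time $C^\infty$ bounds on $\gamma_t^*\tilde\omega_t$ you aim for, including the Laplacian estimate you flag as the main obstacle, have no starting point.

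\paragraph{The missing idea: smoothing from $\mathcal E^1$ data.} The fix is to give up uniform-in-time $C^0$ control and use smoothing from $\mathcal E^1$ data on finite windows. The paper restarts the flow at $\tau_j[\varphi_{t_j}]$. These converge in $L^1(P)$ to an $\mathcal E^1_\T$ limit, and the paper proves $j$-independent estimates for $s\geq\varepsilon$ (Theorem~\ref{thm:conditional-weak-B-flow}) as follows.
\begin{enumerate}[label=\textup{(\arabic*)}]
\item Equation \eqref{eq:manifold-scalar-flow} is the normalized KR flow perturbed by $q\in[0,Q]$, where $Q$ comes from the maximum principle for $|b|^2$. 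Hence $\psi_j\leq\varphi_j\leq\psi_j+\frac Q2(e^{2s}-1)$, where $\psi_j$ are the KR flows with the same initial data.
\item Di Nezza--Lu's stability for weak KR flow from zero-Lelong-number data bounds $\psi_j$, and hence $\varphi_j$, in $C^0$ for $s\geq\varepsilon$.
\item Then come bounds on $\dot\varphi_j$, and a trace estimate in which the torsion $|T_j^c|^2$ is absorbed via $\mathcal L_j|b_j|^2\leq-|T_j^c|^2$.
\item Jordan--Streets local estimates give the higher-order bounds.
\end{enumerate}
The restarted flows then converge for $s>0$ to a weak KR flow from $\varphi_\infty$. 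Finite total dissipation forces each time slice to be a soliton. Finally, the Huang--Zhu stability theorem (Lemma~\ref{lem:toric-local-stability}) gives convergence of the whole flow with a smooth $\gamma_t$.

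\paragraph{Two smaller points.}
\begin{enumerate}[label=\textup{(\roman*)}]
\item The monotonicity is exact, $\frac{\dd}{\dd t}\cM_\xi(u_t,B_t)=-\cD(t)\leq0$, via the divergence identity for $\hat C$ and completing the square. Your $O(e^{-2t})$ error term would require metric bounds you do not yet have at that stage.
\item In the Futaki-zero case, $\T$-invariance does not remove the $\TC$ directions. On $\T$-invariant functions, the kernel of the linearization modulo constants is spanned exactly by the moment-map components. This is why the limit is a fixed pullback $\omega_{\KE,\infty}$ of $\omega_{\KRS}$, obtained from \cite[Theorem~1.2]{HuangZhu}.
\end{enumerate}
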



To prove Theorem~\ref{thm:main},  we  first introduce an extended  Mabuchi  functional  $\cM_\xi(u,B)$
 for  generalized  K\"ahler metrics of   model $(u,0,B)$ on the  toric Fano manifold $(M,J,\T)$, see Section~\ref{sec:mabuchi}.  Here $\xi\in\mathfrak t\cong\mathbb R^m$ is associated to the KR soliton vector field on $(M,J)$.  This functional  is  just the modified $K$-energy $\cM_\xi(u,0)$
  when $B=0$ on the toric Fano manifold, as in
  \cite{TianZhu,WangZhouZhu}.  Actually, it can be compared with  $\cM_\xi(u,0)$
  such that the properness  of  $\cM_\xi(u,B)$
 still holds as $\cM_\xi(u,0)$, see Proposition~\ref{prop:M-variational-structure}.
It is interesting to mention that  the  properness and coercivity results of  modified $K$-energy  modulo
automorphisms hold on  Fano manifolds  which admit KR
solitons, see Darvas--Rubinstein~\cite{DarvasRubinstein} and
Hisamoto~\cite[Section~4]{HisamotoToric}.
 Moreover,  we can show that
 $\cM_\xi(u_t, B_t)$ is monotone along (\ref{eq:intro-flow})  as in the case of KR flow (cf. \cite{TZ1, TZZZ}), see Theorem \ref{thm:dissipation}.
As a consequence,  there exists a sequence of
  $u_j$, which  converges to $u_\infty$ in $L^1(P)$.  The latter implies that 
the relative  K\"ahler potential $\varphi_\infty$ associated to  $u_\infty$ belongs to the $\mathcal E^1$-space,
  see Proposition~\ref{prop:toric-compactness}.

Next, we shall improve that  $u_j$ is in fact  convergent in  $C^k$-norm for any integer $k>0$.
Then by a stability result in \cite{HuangZhu}, we will finish the proof
of Theorem~\ref{thm:main}.
Inspired  by  the work  of  Di Nezza--Lu  on the
uniqueness of  weak KR flow
with an  initial  K\"ahler potential in $\mathcal E^1$-space  \cite{DNL}, we  actually establish the following a priori estimates  for the reduced equation (\ref{eq:intro-flow}).


\begin{theorem}
\label{thm:conditional-weak-B-flow}
Fix $B_0\in\wedge^2\mathfrak t$.  Let $u_{0,j}\in\mathcal S(P)$ be
a sequence of symplectic potentials corresponding to toric
K\"ahler metrics in the class $2\pi c_1(M,J)$,
normalized by $u_{0,j}\geq0$ and $u_{0,j}(0)=0$, and converging to
$u_\infty$ in $L^1(P)$.  Let $\varphi_{0,j}$ and $\varphi_0$ be
the corresponding relative K\"ahler potentials:
\[
 \varphi_{0,j}=u_{0,j}^*-\bar\phi_0,\qquad
 \varphi_0=u_\infty^*-\bar\phi_0\in\mathcal E^1_\T,
\]
where $\bar\phi_0$ is a fixed reference Legendre potential.
Assume that each triple $(u_{0,j},0,B_0)$ determines a smooth toric
generalized K\"ahler structure of symplectic type on $M$, and that
the corresponding bi-Hermitian data satisfy
\begin{equation}\label{eq:weak-B-uniform-gap}
 \sup_j\sup_M|b_{0,j}|_{g_{0,j}}^2\leq2Q
\end{equation}
for some $Q<\infty$.  

Let $u_j(t)$ solve \eqref{eq:intro-flow}
with initial potential $u_{0,j}$, and let $\varphi_j(t)$ be the
corresponding relative K\"ahler potentials  (which satisfy
\eqref{eq:manifold-scalar-flow} in Section 2 below) with initial values
$\varphi_j(0)=\varphi_{0,j}$.
Then the following is true:

i)  For every
$0<\varepsilon<T<\infty$ and every $k\geq0$,
\[
 \|\varphi_j\|_{C^k([\varepsilon,T]\times M)}
 \leq C_{\varepsilon,T,k}.
\]
The constants $C_{\varepsilon,T,k}$ are independent
of $j$. Moreover, $\varphi_j$ converges in
$C^\infty_{\mathrm{loc}}((0,\infty)\times M)$ to a smooth solution
$\varphi(t)$ with $B_t=e^{-2t}B_0$.  

ii) The corresponding symplectic
potential $u(t):=(\bar\phi_0+\varphi(t))^*$ satisfies
\[
 \|u(t)-u_\infty\|_{L^1(P)}
 \longrightarrow0\quad(t\to0^+).
\]

iii) 
The limit is independent of the approximating sequence among all
sequences satisfying a bound of the form
\eqref{eq:weak-B-uniform-gap}.  Thus every $\varphi_0$ admitting such
an approximation determines a unique weak flow within this class of
approximation limits, smooth for $t>0$.
\end{theorem}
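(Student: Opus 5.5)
The plan is to follow the Di Nezza--Lu strategy for weak Kähler--Ricci flow from $\mathcal E^1$ data. The complex-side scalar equation \eqref{eq:manifold-scalar-flow} is a perturbed parabolic Monge--Ampère flow. The perturbation comes from the term $\ii B_t$ inside the determinant: in complex coordinates it becomes a lower-order, $\T$-invariant modification of $\omega_\varphi^m$ whose size is controlled by $|b|_g^2$. The first step is to show that the gap bound \eqref{eq:weak-B-uniform-gap} propagates along the flow with $j$-independent constants. I will apply the maximum principle to $|b_t|^2_{g_t}$, or equivalently to $\log\det(I+(D^2u)^{-1}B_t(D^2u)^{-1}B_t^T)$ on the polytope side. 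This should give, for $t\in[0,T]$, a uniform bound $\sup_M|b_{j}(t)|^2_{g_j(t)}\le 2Q e^{Ct}$, and in fact decay, since $B_t=e^{-2t}B_0$. With this bound, $\det(D^2u+\ii B_t)$ is comparable to $\det D^2u$ up to factors in $[1,C_Q]$. Hence the equation is uniformly comparable to the usual normalized KR flow: $\dot\varphi=\log(\omega_\varphi^m/\omega_0^m)+\varphi+h+E_j$ with $|E_j|\le C_Q$.

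Next come the estimates for $t\ge\varepsilon$. Convergence $u_{0,j}\to u_\infty$ in $L^1(P)$, together with the normalization, gives uniform convergence of $u_{0,j}$ on compact subsets of $P$ and a uniform bound on the $\mathcal E^1$ energies of $\varphi_{0,j}$. For toric potentials this yields a uniform bound on $\sup\varphi_{0,j}$ and on $I(\varphi_{0,j})$. Following Di Nezza--Lu, I will then build barriers. The upper bound $\sup\varphi_j(t)\le C_T$ follows from the maximum principle. For the lower bound I use $t\dot\varphi_j-C\varphi_j$-type quantities and compare with the flow started from a smoothing of $\varphi_0$. In the toric setting this is cleaner: the $L^1$-convergence of the convex $u_j$ gives $\varphi_j(t)\ge -C_{\varepsilon,T}$ for $t\ge\varepsilon$ through Kołodziej-type $L^\infty$ estimates, applied to the density $e^{\dot\varphi-\varphi-h-E_j}$, which lies in $L^p$ uniformly thanks to the bound on $t\dot\varphi$. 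After that I run the standard $t\dot\varphi$ bounds, the parabolic Chern--Lu/Aubin--Yau $C^2$ estimate with factor $t$, and the Evans--Krylov and Schauder theory. The $B$-terms enter only as smooth bounded coefficients, controlled by the propagated gap bound. Together these give i). Arzelà--Ascoli and a diagonal argument then produce the subsequential $C^\infty_{loc}$ limit $\varphi(t)$ solving the flow.

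For ii), I will establish a uniform short-time stability estimate $\|u_j(t)-u_{0,j}\|_{L^1(P)}\le C\sqrt t$, or an equivalent bound in terms of $I$-distance. On the polytope side this comes from integrating $\dot u$ against $dx$: since $\int_P\log\det(D^2u+\ii B_t)$ is controlled by concavity and the gap bound, the terms $2u-2x\cdot\nabla u$ are controlled by $L^1$ norms through integration by parts against the boundary measure. Combining this with $u_{0,j}\to u_\infty$ and lower semicontinuity gives ii).

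For iii), I will prove an $L^1$ contraction (or $d_1$-contraction) estimate between two flows with the same $B_t$. Consider $\psi=\varphi_j-\tilde\varphi_j$. Since $\log\det(\cdot+\ii B_t)$ is concave on the relevant cone, the equation for $\psi$ satisfies a maximum principle of the form $\sup\psi(t)\le e^{t}\sup\psi(0)$ up to terms involving the $B$-gap. This sup-norm version fails for merely $L^1$ data, so instead I would follow Di Nezza--Lu's monotonicity of the $I$-functional or $d_1$ distance. I would first try to show that $t\mapsto e^{-Ct}\int_P|u_j(t)-\tilde u_j(t)|\,dx$ is nonincreasing, using the convexity of $-\log\det$ in $D^2u$ and an integration by parts on $P$ in which the Guillemin boundary behaviour kills boundary terms. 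Letting $j\to\infty$ then shows that limits from different approximating sequences agree.

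The main obstacle is this $L^1$ contraction with the non-symmetric perturbation $\ii B_t$. Concavity of $A\mapsto\log\det(A+\ii B)$ on positive symmetric $A$ must be verified, and the $B$-dependent error must be absorbed using \eqref{eq:weak-B-uniform-gap} uniformly in $j$.
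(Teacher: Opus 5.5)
There is a genuine gap in parts ii) and iii), and a weaker one in the higher-order estimates.

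\textbf{Part iii).} Uniqueness rests on an $L^1(P)$ (or $d_1$) contraction between two flows with the same $B_t$. You do not prove this, and there is no reason to expect it. The difference $w$ of two solutions solves a non-divergence linear equation. Its coefficient matrix is an average of $\operatorname{Re}\bigl((U_\theta+\ii B_t)^{-1}\bigr)$ along $U_\theta=\theta U+(1-\theta)\tilde U$. Unlike a single $X$ (Lemma~\ref{lem:piola}), this average satisfies no Piola identity. So testing against $\operatorname{sgn}w$ produces terms involving derivatives of the coefficients, which are not uniformly controlled as $t\to0^+$. Concavity of $U\mapsto\log\det(U+\ii B)$ is true but does not yield $L^1$ contraction.

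\textbf{Part ii).} The uniform rate $\|u_j(t)-u_{0,j}\|_{L^1(P)}\le C\sqrt t$ is unsupported. It would need a lower bound on $\int_P\log\det(U+\ii B)\,dx$ and control of $\int_{\partial P}u_j(t)$ near $t=0$. Data that merely converge in $L^1(P)$ provide neither.

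\textbf{Higher-order estimates.} $B$ does not enter as bounded coefficients, because $q_j=-\log\det(I+\ii H_jB_j)$ depends on the full Hessian $H_j$. The complex-side operator $H\mapsto-\log\det(H^{-1}+\ii B)$ is not concave. For $m=2$, $B_{21}=-B_{12}=b$ and $H=\lambda I$, it equals $2\log\lambda-\log(1-b^2\lambda^2)$, which is convex once $b^2\lambda^2>1/3$. The gap bound allows this regime, so Evans--Krylov is not available off the shelf. The paper gets $C^2$ from the ASU inequality for $\log\tr_{\omega_0}\omega_j$, absorbing $|T_j^c|^2$ via $\mathcal L_j|b_j|^2\le-|T_j^c|^2$. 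It gets higher derivatives from the Jordan--Streets local estimates, which need only uniform equivalence of the generalized metric.

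\textbf{The missing idea.} It is already latent in your first step, which matches the paper. For all time, $0\le q_j\le\frac12|b_j|_{g_j}^2\le Q$, using ASU's maximum principle for $|b|^2$. A short maximum-principle argument then gives the two-sided comparison $\psi_j\le\varphi_j\le\psi_j+\frac Q2(e^{2s}-1)$. Here $\psi_j$ is the normalized KR flow from the same initial potential (Lemma~\ref{lem:translated-comparison}). This comparison settles every remaining step:
\begin{enumerate}
\item It imports the uniform $C^0$ bound on $[\varepsilon,S]\times M$ directly from Di Nezza--Lu, with no need to redo Ko\l odziej-type estimates for the perturbed equation.
\item Passing to the limit gives $\chi\le\varphi\le\chi+\frac Q2(e^{2t}-1)$, where $\chi$ is the unique weak KR flow from $\varphi_0$.
\item Legendre transforms reverse order and turn added constants into subtracted ones. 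So $v(t)-\frac Q2(e^{2t}-1)\le u(t)\le v(t)$ with $v(t)=(\bar\phi_0+\chi(t))^*$. Since $v(t)\to u_\infty$ in $L^1(P)$ by energy convergence and Scheff\'e, this proves ii).
\item For iii), any two limits are within $\frac Q2(e^{2\delta}-1)$ of each other in sup norm at every $\delta>0$. The sup-norm maximum principle you set aside therefore works once started at $t=\delta$ instead of $t=0$. It gives $\|\varphi(t)-\tilde\varphi(t)\|_{C^0}\le\frac Q2e^{2(t-\delta)}(e^{2\delta}-1)$, and letting $\delta\to0^+$ proves uniqueness.
\end{enumerate}
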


\vskip5mm


The paper is organized as follows.  In Section~2 we  recall some basic  results  for the  generalized  KR   flow  on  toric Fano manifolds by Apostolov--Streets--Ustinovskiy  in \cite{ASU-toric}.
In  Section~3  we first introduce an extended  Mabuchi functional and  then we prove the properness and the monotonicity of the functional  along (\ref{eq:intro-flow}), see 
Proposition \ref{prop:M-variational-structure} and  Theorem \ref{thm:dissipation}. The main results, 
 Theorem \ref{thm:conditional-weak-B-flow}  and Theorem \ref{thm:main}  will be proved in Section 4 and Section 5, respectively.

\vskip5mm

$\mathbf {Acknowledgment.}$  The paper was partially  finished  while the second author was visiting ICTP.   He would like to thank  professor  Claudio Arezzo  and ICTP for the invitation and  the hospitality.

\vskip5mm
\section{Preliminaries}\label{sec:preliminaries}

In this section we recall some basic results for the
generalized KR flow on toric Fano manifolds, see \cite{ASU-toric}
for details.


\subsection{Toric structures and the scalar equation}

For a  complex structure $J$, we set
\[
 d_J^c=\ii(\bar\partial_J-\partial_J),\qquad
 \omega_J(V,W)=g(JV,W).
\]
Thus $\ddcJ=2\ii\partial_J\bar\partial_J$, and
\[
 \Ric(\eta)=-\frac12\ddcJ\log\det(\eta_{i\bar j}),\qquad
 \Delta_g^C=2g^{\bar ji}\nabla_i^C\nabla_{\bar j}^C.
\]
We recall the definition of a generalized K\"ahler structure of
symplectic type~\cite[Section~2.2]{ASU-toric}.

\begin{definition}
\label{def:symplectic-GK}
Let $(M,J)$ be a compact complex manifold.  A generalized
K\"ahler structure of symplectic type is a triple $(g,I,J)$, where
$g$ is a Riemannian metric Hermitian with respect to both $I$ and
$J$, $I$ is integrable, and
\[
 d_I^c\omega_I=\mathcal H=-d_J^c\omega_J,
 \qquad d\mathcal H=0,
 \qquad \det(I+J)\ne0\quad\text{on }M.
\]
\end{definition}

The associated symplectic form $F$ and the
two-form $b$ in the canonical splitting satisfy~\cite[Section~2.2]{ASU-toric}:
\[
 F=-2g(J+I)^{-1},\qquad
 b=-g(J+I)^{-1}(I-J),\qquad -FJ=g+b,
 \qquad db=-d_J^c\omega_J.
\]
The form $F$ is closed and tames $J$: $F(V,JV)>0$ for every
nonzero tangent vector $V$.
Products in these identities denote composition of bundle maps.
In particular, $(g+b)(V,W)=-F(JV,W)$.


Since $[F_0]=2\pi c_1(M,J)$, we use the anticanonical normalization
of the labelled Delzant polytope:
\[
 L_r(x)=\langle x,\nu_r\rangle+1,\qquad
 P=\{x\in\mathfrak t^*:L_r(x)\geq0,\ 1\leq r\leq d\},
\]
where $\nu_r$ is the primitive inward normal to the $r$th facet.
In particular, $0\in P^\circ$.  Let $\xi\in\mathfrak t$ be the
soliton vector, normalized by
\begin{equation}\label{eq:barycenter}
 \rho_\xi(x)=e^{\langle\xi,x\rangle},
 \qquad \int_Px\,\rho_\xi(x)\dd x=0.
\end{equation}
This is the vector associated to the KR soliton
uniquely determined by the vanishing of generalized Futaki-invariant
\cite{WangZhu,TianZhu}.

Fix a lattice basis $e_1,\ldots,e_m$ of $\mathfrak t$.  We write
$K_\alpha$ for the fundamental vector field generated by $e_\alpha$
and $x_\alpha$ for the corresponding moment-map component.
Fix the canonical
Guillemin potential
\[
 u_c=\frac12\sum_{r=1}^d L_r\log L_r.
\]
The Guillemin class $\mathcal S(P)$ consists of the functions
$u=u_c+v$, with $v\in C^\infty(\bar P)$, such that $D^2u$ is
positive on $P^\circ$ and remains positive on the tangent space of
the relative interior of every face.  These facewise positivity
conditions are equivalent to smooth compactification of the toric
K\"ahler metric determined by $U=D^2u$
\cite{Guillemin,Abreu}.  Adding an affine function to $u$ does not
change this metric.  Throughout, $\dd x$ denotes the lattice
Lebesgue measure on $\mathfrak t^*$.

A smooth toric generalized K\"ahler structure of
symplectic type is determined, up to equivariant identification, by
data $(u,A,B)\in\mathcal S(P)\times\wedge^2\mathfrak t
\times\wedge^2\mathfrak t$ satisfying the compactification
criterion~\cite[Lemma~6.3]{ASU-space}. In particular,
\begin{equation}\label{eq:positivity}
 U+\ii B>0\quad\text{on }P^\circ,
\end{equation}
and the same positivity holds after restriction to every face.
The parameters $A$ and $B$ are the type-$A$ and type-$B$
deformations, respectively; see
\cite{ASU-toric,Wang-I,Wang-II}.
\begingroup
For the evolving data $(u_t,A_t,B_t)$, set
$U_t=D_x^2u_t$ and $\Psi_t=U_t+A_t$.
Use momentum--angle coordinates $(x,\theta)$ of the underlying
toric K\"ahler structure on the dense orbit. Relative to the ordered
frame $(\partial_{\theta_1},\ldots,\partial_{\theta_m},
\partial_{x_1},\ldots,\partial_{x_m})$, the reconstruction formulas are
\cite[equations~(3.16) and~(3.28)]{ASU-toric}
\[
 J_t\sim
 \begin{pmatrix}
  0&\Psi_t^{\mathsf T}\\
  -(\Psi_t^{\mathsf T})^{-1}&0
 \end{pmatrix}
 \quad\text{and}\quad
 I_t\sim
 \begin{pmatrix}
  2B_t\Psi_t^{-1}&\Psi_t+4B_t\Psi_t^{-1}B_t\\
  -\Psi_t^{-1}&-2\Psi_t^{-1}B_t
 \end{pmatrix},
\]
where $\mathsf T$ denotes transpose.
The Hermitian form $\omega_t$ has matrix
\[
 \omega_t\sim
 \begin{pmatrix}
  \Psi_t^{-1}B_t(\Psi_t^{\mathsf T})^{-1}&-\Psi_t^{-1}U_t\\
  U_t(\Psi_t^{\mathsf T})^{-1}&B_t
 \end{pmatrix}.
\]
\endgroup

The evolution of $A_t$ is decoupled from the
	equations for $u_t$ and $B_t$. We therefore derive the scalar
	estimates in the $A=0$ model~\cite[Corollary~4.3]{ASU-toric}.

The Legendre transform $\phi=u^*$ satisfies
\[
 y=\nabla u(x),\qquad x=\nabla\phi(y),\qquad
 u(x)+\phi(y)=\langle x,y\rangle.
\]
We reserve $\varphi$ for the relative K\"ahler potential on $M$.
Let $\operatorname{Aff}(P)$ act on $\mathcal S(P)$ by addition.  We
write
\[
 \mathcal G_\T(P)
 :=\left\{(u,B)\in\mathcal S(P)\times\wedge^2\mathfrak t:
 (u,0,B)\text{ satisfies the compactification criterion}\right\}
 \big/\operatorname{Aff}(P).
\]
\begingroup
Thus $\mathcal G_\T(P)$ parametrizes the $A=0$ toric generalized
K\"ahler metrics of symplectic type. It is convex, since positivity
of $D^2u+\ii B$, including its restrictions to the faces, is preserved
under convex combinations.

Apostolov--Streets--Ustinovskiy derived the scalar equation
\eqref{eq:intro-flow} for the symplectic potential $u$ from the
generalized KR flow \eqref{eq:intro-normalized-GKRF}.
We also have the following reduced scalar equation for the
Legendre transform $\phi$ of $u$
\cite[Propositions~4.1 and~4.2]{ASU-toric}.
\endgroup

\begin{proposition}\label{prop:ASU-scalar-reduction}
 Let $u_t$ be a smooth solution of \eqref{eq:intro-flow}.
 Its Legendre transform $\phi_t=u_t^*$ satisfies
\[
  \partial_t\phi_t
  =-\log\det\!\left((D^2\phi_t)^{-1}+\ii B_t\right)
   +2\phi_t.
 \]
\end{proposition}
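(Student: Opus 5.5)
The statement is a direct Legendre-duality computation, so the plan is to differentiate the identity $u_t(x)+\phi_t(y)=\langle x,y\rangle$ in time and then rewrite each term of \eqref{eq:intro-flow} in the $y$-variables. Fix $y$ and let $x=x_t(y)=\nabla\phi_t(y)$, which is the unique point with $y=\nabla u_t(x)$. Differentiating $\phi_t(y)=\langle x_t(y),y\rangle-u_t(x_t(y))$ in $t$ at fixed $y$ gives
\[
 \partial_t\phi_t(y)=\langle\dot x,y\rangle-\dot u_t(x)-\langle\nabla u_t(x),\dot x\rangle=-\dot u_t(x),
\]
because $\nabla u_t(x)=y$. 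This is the standard envelope identity. Its only delicate point is that $x_t(y)$ depends smoothly on $t$, and this follows from the implicit function theorem since $D^2u_t>0$ on $P^\circ$.

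Next I substitute \eqref{eq:intro-flow} at $x=\nabla\phi_t(y)$. For the Hessian term, duality gives $D^2u_t(x)=(D^2\phi_t(y))^{-1}$, so $\log\det(D^2u_t+\ii B_t)$ becomes $\log\det\bigl((D^2\phi_t)^{-1}+\ii B_t\bigr)$. For the zeroth and first order terms, $2u_t(x)-2\langle x,\nabla u_t(x)\rangle=2u_t(x)-2\langle x,y\rangle=-2\phi_t(y)$. Combining these,
\[
 \partial_t\phi_t=-\dot u_t(x)=-\log\det\bigl((D^2\phi_t)^{-1}+\ii B_t\bigr)+2\phi_t,
\]
which is the claimed equation.

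There is no real obstacle, only bookkeeping. One point is that the determinant of the complex matrix $U+\ii B$ is real and positive: since $B$ is antisymmetric, $U+\ii B$ is Hermitian, and it is positive definite by \eqref{eq:positivity}. The same expression is then used on both sides, so taking the logarithm is legitimate. The other point is that the identity is pointwise on $\mathfrak t^*$ for $x$ in the interior $P^\circ$, which corresponds to all $y\in\mathbb R^m$. This matches the domain on which $\phi_t$ is used as a $\T$-invariant potential on the dense orbit.
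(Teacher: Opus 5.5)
Your proposal is correct. The envelope identity $\partial_t\phi_t(y)=-\dot u_t(\nabla\phi_t(y))$, together with $D^2u_t=(D^2\phi_t)^{-1}$ and $u_t(x)-\langle x,\nabla u_t(x)\rangle=-\phi_t(y)$, gives exactly the stated equation. Your remarks on the smooth $t$-dependence of $x_t(y)$ and on the positivity of $\det(U+\ii B_t)$ cover the only technical points.

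The paper gives no proof of its own here and simply cites Apostolov--Streets--Ustinovskiy (Propositions~4.1 and~4.2 of their toric paper). Your Legendre-duality computation is a complete, self-contained verification of the cited statement.
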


Fix a smooth toric reference form $\omega_0\in2\pi c_1(M,J)$,
and let $\bar\phi_0$ be its K\"ahler potential on the dense orbit.
The relative potential $\varphi_t=\phi_t-\bar\phi_0$
extends smoothly to $M$. We write $\bar\omega_t$ for the
corresponding K\"ahler reduction on $(M,J)$ and set
\[
 \bar\omega_t:=\omega_0+\ddcJ\varphi_t.
\]
\begingroup
In the $A=0$ model, use logarithmic coordinates $y$ and angular
coordinates $\theta$ on the dense orbit of $(M,J)$, and set
$H_t=D_y^2\phi_t$.
Then $\bar\omega_t=\sum_{a,b}(H_t)_{ab}\,dy_a\wedge d\theta_b$, and
\begin{equation}\label{eq:toric-Hermitian-form}
 \tilde{\omega}_t
 =\bar\omega_t+\frac12\sum_{a,b}(H_tB_tH_t)_{ab}
       (dy_a\wedge dy_b+d\theta_a\wedge d\theta_b).
\end{equation}
\endgroup
Let $h_0$ be the Ricci potential determined by
$\ddcJ h_0=\Ric(\omega_0)-\omega_0$, normalized by
$\int_Mh_0\,\omega_0^m=0$.
\begingroup
Using the metric formulas, we find
\[
 q_t:=\log\frac{\bar\omega_t^m}{\tilde{\omega}_t^m}
 =-\log\det(I+\ii H_tB_t)\geq0,
\]
where the inequality follows from \eqref{eq:skew-normal-form}.
The following equivalent formulation of
Proposition~\ref{prop:ASU-scalar-reduction} was proved in
\cite[Proposition~4.4]{ASU-toric}.
\endgroup

\begin{proposition}
\label{prop:manifold-scalar-reduction}
 After changing $\varphi_t$ by a function of time, the K\"ahler
 reduction satisfies
 \begin{equation}\label{eq:manifold-scalar-flow}
  \partial_t\varphi_t
  =\log\frac{\bar\omega_t^m}{\omega_0^m}
   +q_t+2\varphi_t-2h_0.
 \end{equation}
\end{proposition}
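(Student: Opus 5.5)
The plan is to derive \eqref{eq:manifold-scalar-flow} from Proposition~\ref{prop:ASU-scalar-reduction} by rewriting each term on the dense orbit in logarithmic coordinates $y$, and then to use the $\T$-invariance of everything involved to conclude that both sides extend smoothly to $M$.

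\emph{Step 1: the determinant term.} Set $H_t=D^2_y\phi_t$. Factor
\[
 (D^2\phi_t)^{-1}+\ii B_t=H_t^{-1}\bigl(I+\ii H_tB_t\bigr),
\]
which gives
\[
 -\log\det\bigl(H_t^{-1}+\ii B_t\bigr)=\log\det H_t-\log\det(I+\ii H_tB_t)=\log\det H_t+q_t .
\]
Since $H_t$ is symmetric positive definite and $B_t$ is real skew, the eigenvalues of $H_t^{1/2}B_tH_t^{1/2}$ come in pairs $\pm\ii\mu$. Hence $\det(I+\ii H_tB_t)=\prod(1-\mu^2)$ is real, and the positivity condition \eqref{eq:positivity} shows that it lies in $(0,1]$. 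So $q_t$ is a well-defined real function and $q_t\ge 0$.

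\emph{Step 2: Monge--Amp\`ere ratio and Ricci potential.} Because $\bar\omega_t=\sum(H_t)_{ab}\,dy_a\wedge d\theta_b$, on the dense orbit
\[
 \bar\omega_t^m=m!\,\det H_t\,dy\wedge d\theta,\qquad \omega_0^m=m!\,\det D^2\bar\phi_0\,dy\wedge d\theta .
\]
Therefore $\log\det H_t=\log(\bar\omega_t^m/\omega_0^m)+\log\det D^2\bar\phi_0$. The standard toric computation of the Ricci form of $\omega_0$, with the convention $\Ric=-\tfrac12\ddcJ\log\det$, gives $\Ric(\omega_0)=-\tfrac12\ddcJ\log\det D^2\bar\phi_0$. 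Substituting into $\ddcJ h_0=\Ric(\omega_0)-\omega_0$ shows that $\log\det D^2\bar\phi_0+2\bar\phi_0+2h_0$ is pluriharmonic and $\T$-invariant, hence affine in $y$. The main point of the whole argument is that this affine function must in fact be a constant. I would prove this by a growth argument. Since $[\omega_0]=2\pi c_1$ and the polytope is anticanonically normalized, $\nabla\bar\phi_0$ maps onto $P^\circ$. By Guillemin's boundary behaviour, $\log\det D^2\bar\phi_0+2\bar\phi_0$ is bounded up to smooth terms near each facet, because the $L_r\log L_r$ singularities cancel under the normalization $L_r=\langle x,\nu_r\rangle+1$. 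Also $h_0$ is bounded on $M$. An affine function of $y$ that is bounded on $\mathbb R^m$ is constant, so
\[
 \log\det D^2\bar\phi_0=-2\bar\phi_0-2h_0+c .
\]

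\emph{Step 3: assembling the equation.} Substitute into Proposition~\ref{prop:ASU-scalar-reduction}, using $\partial_t\phi_t=\partial_t\varphi_t$ and $2\phi_t=2\varphi_t+2\bar\phi_0$:
\[
 \partial_t\varphi_t=\log\frac{\bar\omega_t^m}{\omega_0^m}+q_t-2\bar\phi_0-2h_0+c+2\varphi_t+2\bar\phi_0 .
\]
The $\bar\phi_0$ terms cancel, leaving \eqref{eq:manifold-scalar-flow} up to the constant $c$. Replacing $\varphi_t$ by $\varphi_t+\tfrac c2(1-e^{2t})$ absorbs $c$; this is exactly the allowed change by a function of time, and it does not change $\bar\omega_t$ or $q_t$.

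\emph{Step 4: smooth extension to $M$.} Each term is smooth on $M$. The terms $\varphi_t$ and $\bar\omega_t^m/\omega_0^m$ are smooth because $\varphi_t$ extends smoothly. For $q_t$, note that $H_tB_tH_t$ is the coefficient matrix in \eqref{eq:toric-Hermitian-form} of the smooth form $\tilde\omega_t-\bar\omega_t$, so $q_t=\log(\bar\omega_t^m/\tilde\omega_t^m)$ is a smooth function. Hence the identity, proved on the dense orbit, holds on all of $M$ by continuity.
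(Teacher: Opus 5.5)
Your derivation is correct. The paper gives no argument of its own here: it presents \eqref{eq:manifold-scalar-flow} as an equivalent formulation of Proposition~\ref{prop:ASU-scalar-reduction} and cites \cite[Proposition~4.4]{ASU-toric}, whereas you carry out the equivalence explicitly. The factorization $H_t^{-1}+\ii B_t=H_t^{-1}(I+\ii H_tB_t)$ isolates $q_t$. The only non-formal input is the toric Fano identity $\log\det D_y^2\bar\phi_0+2\bar\phi_0+2h_0=\mathrm{const}$, which you obtain from pluriharmonicity plus boundedness. Your boundedness check is right, but it is the $\log L_r$ terms that cancel, not the $L_r\log L_r$ terms. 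Since $\langle x,\nu_r\rangle=L_r-1$, Legendre duality gives $2\bar\phi_0=-\sum_r\log L_r+(\text{smooth on }\bar P)$. Abreu's determinant condition for $\bar u_0\in\mathcal S(P)$ gives $\log\det D_y^2\bar\phi_0=-\log\det D_x^2\bar u_0=\sum_r\log L_r+(\text{smooth})$.

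Compared with the citation, your route shows exactly where the anticanonical normalization enters. Your time correction $\tfrac{c}{2}(1-e^{2t})$ also vanishes at $t=0$, so it matches the convention $\varphi_j(0)=u_{0,j}^*-\bar\phi_0$ used in Section~4; a constant shift would also solve the equation but would move the initial value.

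One justification should be sharpened. The class $2\pi c_1(M,J)$ fixes the moment polytope only up to translation, so $\nabla\bar\phi_0(\mathbb R^m)=P^\circ$ is a normalization of $\bar\phi_0$, not a consequence of the class. It is forced here because $\varphi_t=\phi_t-\bar\phi_0$ extends to $M$: an image $P^\circ+a$ would make $\varphi_t$ grow like $-\langle a,y\rangle$. It is also genuinely needed, since otherwise your affine function would be $2\langle a,y\rangle+c$ rather than a constant.
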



\subsection{Matrix identities and local estimates}

Throughout the paper we use
\[
 (U+\ii B)^{-1}=X+\ii Y,\qquad
 \hat C=YB=XU-I.
\]
In matrix identities, $I$ denotes the identity matrix; elsewhere it
denotes the complex structure.  Here $X$ is real symmetric and $Y$
is real skew-symmetric.
Define $S=U^{-\frac{1}{2}}BU^{-\frac{1}{2}}$.
Then $S$ is real skew-symmetric and \eqref{eq:positivity} is
equivalent to $I+\ii S>0$.  Let $s_\alpha\geq0$ denote the parameters
in the orthogonal skew-normal form.  Thus there is $Q\in O(m)$ such
that
\begin{equation}\label{eq:skew-normal-form}
 Q^TSQ
 =\bigoplus_\alpha
   \begin{pmatrix}0&-s_\alpha\\ s_\alpha&0\end{pmatrix}
  \oplus 0.
\end{equation}
The eigenvalues of $I+\ii S$ are $1\pm s_\alpha$ and possibly $1$.
Hence $I+\ii S>0$ if and only if $s_\alpha<1$ for every $\alpha$, and
\[
 \det(I+\ii S)=\prod_\alpha(1-s_\alpha^2)>0.
\]
Direct inversion gives
\begin{equation}\label{eq:X-formula}
 X=U^{-\frac{1}{2}}(I+S^2)^{-1}U^{-\frac{1}{2}}.
\end{equation}
Since $S^2\leq0$ and $I+S^2>0$, formula~\eqref{eq:X-formula}
implies
\begin{equation}\label{eq:D-positive}
 X-U^{-1}\geq0,\qquad (X-U^{-1})U=\hat C.
\end{equation}

\begingroup
We recall the local estimates used to control higher derivatives.
The two-form $b$ has type $(2,0)+(0,2)$ with respect to $J$.
With $\pi_J^{2,0}$ denoting projection onto $(2,0)$-forms, put
\begin{equation}\label{eq:JS-beta-conversion}
 \beta^{\rm JS}=-\ii\pi_J^{2,0}b,\qquad
 \bar\partial\beta^{\rm JS}=\partial\omega_J.
\end{equation}
\endgroup
Apostolov--Streets--Ustinovskiy proved that the $(2,0)$-part of
$b$ satisfies a Chern heat equation along the unnormalized
generalized KR flow with initial data of symplectic
type~\cite[Proposition~4.9]{ASU-toric}.

The following lemma gives its normalized form and
the parabolic rescaling to the unnormalized pluriclosed system
used by Jordan--Streets~\cite{JordanStreets}.

\begin{lemma}\label{lem:JS-convention}
Along the normalized flow,
\begin{equation}\label{eq:normalized-PCF-pair}
 \partial_t\omega_J=-2(\rho_J^B)^{1,1}+2\omega_J,
 \qquad
 \partial_t\beta^{\rm JS}=2(\rho_J^B)^{2,0}+2\beta^{\rm JS}
 =\Delta_g^C\beta^{\rm JS}+2\beta^{\rm JS}.
\end{equation}
Under the change of variables
\[
 \widehat g=e^{-2t}g,\qquad
 \widehat\beta=e^{-2t}\beta^{\rm JS},\qquad
 \sigma=1-e^{-2t},
\]
{with $\widehat\omega:=e^{-2t}\omega_J$,}
the pair satisfies the unnormalized equations
\begin{equation}\label{eq:JS-component-system}
 \partial_\sigma\widehat\omega=-(\rho_{\widehat g}^B)^{1,1},
 \qquad
 \partial_\sigma\widehat\beta
 =\tfrac12\Delta_{\widehat g}^C\widehat\beta,
 \qquad
 \bar\partial\widehat\beta=\partial\widehat\omega.
\end{equation}
\end{lemma}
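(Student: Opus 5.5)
The plan is to prove the lemma by reduction to the unnormalized flow, where the $(2,0)$-equation is exactly \cite[Proposition~4.9]{ASU-toric}, and then to track the scaling. Let $(\check\omega(s),\check\beta(s))$ solve the unnormalized generalized KR flow in the fixed-$J$ gauge, with the same conventions as \eqref{eq:intro-normalized-GKRF} but $\lambda=0$:
\[
\partial_s\check\omega=-2(\rho^B_{\check g})^{1,1},\qquad \partial_s\check\beta=2(\rho^B_{\check g})^{2,0}=\Delta^C_{\check g}\check\beta .
\]
Here $\check\beta=-\ii\pi_J^{2,0}\check b$ as in \eqref{eq:JS-beta-conversion}.

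The first task is to pin down the last identity in our conventions. In \cite[Proposition~4.9]{ASU-toric} the $(2,0)$-part of $b$ satisfies a Chern heat equation, and $(\rho^B)^{2,0}$ is expressed through $\partial\omega_J$ and hence through $\bar\partial\beta$. I would check the following against the conventions fixed above:
\begin{itemize}
\item $d_J^c=\ii(\bar\partial-\partial)$;
\item $db=-d_J^c\omega_J$, whose $(2,1)$-component yields $\bar\partial\beta^{\rm JS}=\partial\omega_J$;
\item $\Delta^C_g=2g^{\bar j i}\nabla^C_i\nabla^C_{\bar j}$.
\end{itemize}
These should give $2(\rho^B)^{2,0}=\Delta^C_g\beta^{\rm JS}$. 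This bookkeeping of factors of $2$ and of the sign of $\beta$ between ASU and Jordan--Streets is where I expect the only real difficulty.

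Next, the normalized solution is obtained by setting
\[
\omega_J(t)=e^{2t}\check\omega(s(t)),\qquad \beta^{\rm JS}(t)=e^{2t}\check\beta(s(t)),\qquad s(t)=\tfrac12(1-e^{-2t}).
\]
The complex structure $I$ is carried along by the corresponding reparametrization. This is legitimate because the Lee-form vector fields $\theta^\sharp$ scale like $g^{-1}$, which exactly compensates $ds/dt=e^{-2t}$ in the $I$-equation. Since the Bismut--Ricci form is invariant under constant rescaling of $g$, the chain rule gives
\[
\partial_t\omega_J=2\omega_J-2(\rho_J^B)^{1,1},
\]
and likewise
\[
\partial_t\beta^{\rm JS}=2\beta^{\rm JS}+2(\rho^B_J)^{2,0}.
\]
Using $\Delta^C_{\check g}=e^{2t}\Delta^C_g$, the latter equals $2\beta^{\rm JS}+\Delta^C_g\beta^{\rm JS}$. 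By uniqueness for the flow from given initial data, this rescaled family is the normalized solution, which proves \eqref{eq:normalized-PCF-pair}.

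Finally, for the change of variables to \eqref{eq:JS-component-system}, note that $\widehat\omega=e^{-2t}\omega_J=\check\omega(s)$ and $\widehat\beta=\check\beta(s)$ with $\sigma=2s$, so $\partial_\sigma=\tfrac12\partial_s$. This gives
\[
\partial_\sigma\widehat\omega=-(\rho^B_{\widehat g})^{1,1},\qquad \partial_\sigma\widehat\beta=\tfrac12\Delta^C_{\widehat g}\widehat\beta .
\]
The constraint $\bar\partial\widehat\beta=\partial\widehat\omega$ follows from $\bar\partial\beta^{\rm JS}=\partial\omega_J$, since both sides are scaled by the same factor $e^{-2t}$.
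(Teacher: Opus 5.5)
Your scaling computations are correct and agree with the paper's. Specifically: $\rho^B$ and the Chern connection are invariant under constant rescaling, $\Delta^C_{\check g}=e^{2t}\Delta^C_g$, $\theta^\sharp$ scales like $g^{-1}$ so the $I$-equation is compatible, and $\sigma=2s$ converts the unnormalized equations into \eqref{eq:JS-component-system} while preserving $\bar\partial\widehat\beta=\partial\widehat\omega$.

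The gap is the first equality in \eqref{eq:normalized-PCF-pair}, $\partial_t\beta^{\rm JS}=2(\rho^B_J)^{2,0}+2\beta^{\rm JS}$. The flow \eqref{eq:intro-normalized-GKRF} prescribes only $\omega_J$ and $I$, while $\beta^{\rm JS}=-\ii\pi_J^{2,0}b$ is a derived quantity. So listing $\partial_s\check\beta=2(\rho^B_{\check g})^{2,0}$ as part of ``the unnormalized flow'' assumes what has to be proved. You then reduce it to the pointwise identity $2(\rho^B)^{2,0}=\Delta^C_g\beta^{\rm JS}$. You do not establish that identity (``these should give''), even though you call it the only real difficulty. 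The paper cites \cite[Proposition~4.9]{ASU-toric} only for the Chern heat equation, not for this identity. As written, the $\rho^{2,0}$-form of the $\beta$-equation is unproved.

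The missing idea is that no pointwise identity has to be checked by hand. Differentiate $\bar\partial\beta^{\rm JS}=\partial\omega_J$ in $t$. Then use the $(2,1)$-component of $d\rho^B_J=0$, namely $\partial(\rho^B_J)^{1,1}=-\bar\partial(\rho^B_J)^{2,0}$. Together these give $\bar\partial\bigl(\partial_t\beta^{\rm JS}-2(\rho^B_J)^{2,0}-2\beta^{\rm JS}\bigr)=0$. The bracket is therefore a holomorphic $2$-form, and it vanishes because $h^{2,0}(M)=0$ for Fano $M$. This is the paper's argument, and it is exactly where the Fano hypothesis enters; your proposal never uses that hypothesis. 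The heat form then comes from ASU's equation transported by your rescaling. The identity $2(\rho^B_J)^{2,0}=\Delta^C_g\beta^{\rm JS}$ follows by comparing the two expressions for $\partial_t\beta^{\rm JS}$, not by computation.

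The appeal to uniqueness is also avoidable: run the rescaling the other way. Define $\check\omega(s):=e^{-2t}\omega_J(t)$ from the given normalized solution. Your own Lee-form check shows this solves the unnormalized system, so you need neither existence of the unnormalized flow on all of $[0,\tfrac12)$ nor uniqueness for the coupled $(\omega_J,I)$ system.
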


\begin{proof}
Differentiate \eqref{eq:JS-beta-conversion} and use
$d\rho_J^B=0$.  It follows that
$\partial_t\beta^{\rm JS}-2(\rho_J^B)^{2,0}-2\beta^{\rm JS}$
is a holomorphic two-form.  It vanishes since $M$ is Fano.
{
The heat equation in \eqref{eq:normalized-PCF-pair} is the
normalized form of the heat equation proved in
\cite[Proposition~4.9]{ASU-toric}.}
Since the Chern connection and the
Bismut--Ricci form are unchanged by constant metric rescaling,
we obtain \eqref{eq:JS-component-system} using
$\frac{d\sigma}{dt}=2e^{-2t}$.
\end{proof}

For a pair $(g,\beta)$, let $\mathcal G$ be the Hermitian metric on
$T^{1,0}M\oplus(T^{1,0}M)^*$ with components
\begin{equation}\label{eq:JS-generalized-metric-preliminary}
\begin{aligned}
 \mathcal G_{Z^i\overline{Z^j}}
 &=g_{i\bar j}+\beta_{ik}\overline{\beta_{j\ell}}g^{\bar\ell k},
 &\mathcal G_{Z^i\overline{W^j}}&=\beta_{ip}g^{\bar jp},\\
 \mathcal G_{W^i\overline{Z^j}}
 &=g^{\bar pi}\overline{\beta_{jp}},
 &\mathcal G_{W^i\overline{W^j}}&=g^{\bar ji},
\end{aligned}
\end{equation}
where $Z^i=\partial_{z^i}$ and $W^i=dz^i$.
We use the following consequence of the local estimates in
\cite[Lemma~3.7 and Theorem~4.2]{JordanStreets}.

\begin{theorem}\label{thm:JS-local}
Let $(g,\beta)$ solve \eqref{eq:JS-component-system} on a fixed
coordinate ball $\mathbb B_R$ for $0\leq\sigma<\tau\leq1$.
Suppose its generalized metric satisfies
\[
 \Gamma^{-1}\widetilde{\mathcal G}\leq\mathcal G
 \leq\Gamma\widetilde{\mathcal G}
\]
for a fixed smooth background metric $\widetilde{\mathcal G}$.
Then for each $k\geq0$ and $0<\sigma_0<\tau$, all spatial derivatives
of $g$ and $\beta$ of order at most $k$ are bounded on
$\mathbb B_{\frac{R}{2}}\times[\sigma_0,\tau)$ by a constant depending
only on $k,\Gamma,\sigma_0,R$ and the background geometry.
The bound is independent of the derivatives of the initial data.
\end{theorem}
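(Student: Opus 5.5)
The plan is to reduce Theorem~\ref{thm:JS-local} to the local regularity theory of Jordan--Streets. The work is in checking that our normalization matches theirs, that the hypothesis on $\mathcal G$ supplies their input, and then running their estimates with spatial and temporal cutoffs.

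\textbf{Step 1: matching conventions.} By Lemma~\ref{lem:JS-convention}, the rescaled pair $(\widehat g,\widehat\beta)$ solves \eqref{eq:JS-component-system}. This should be exactly the unnormalized pluriclosed flow written as a coupled system for $(\omega,\beta)$ with $\bar\partial\beta=\partial\omega$. Note that $\partial_\sigma\widehat\omega=-(\rho^B)^{1,1}$ agrees with Streets--Tian up to the time factor, and this factor only affects constants. In this formulation Jordan--Streets show that the generalized metric \eqref{eq:JS-generalized-metric-preliminary} built from $(g,\beta)$ satisfies a harmonic-map-type heat equation of the schematic form
\[
\partial_\sigma\mathcal G=\tfrac12\Delta\mathcal G-(\partial\mathcal G)\mathcal G^{-1}(\bar\partial\mathcal G)+(\text{lower order}).
\]
I would verify that the component formulas \eqref{eq:JS-generalized-metric-preliminary} and the factor $\tfrac12$ in front of $\Delta^C$ agree with their conventions. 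Any discrepancy can be absorbed by rescaling $\sigma$ by a fixed constant, since $\tau\le1$.

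\textbf{Step 2: $C^0$ control.} From $\Gamma^{-1}\widetilde{\mathcal G}\le\mathcal G\le\Gamma\widetilde{\mathcal G}$, restricting to the $W$-block gives two-sided bounds on $g^{-1}$, hence on $g$. Restricting to the $Z$-block then bounds $\beta\bar\beta g^{-1}$, hence $|\beta|$. So $g$ is uniformly equivalent to the background metric, and $\beta$ is bounded in $C^0$ on $\mathbb B_R\times[0,\tau)$, with constants depending only on $\Gamma$ and the background.

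\textbf{Step 3: first-derivative bound.} This is the main obstacle, and it is the content of \cite[Lemma~3.7]{JordanStreets}. One considers a Calabi-type quantity $\Upsilon$, essentially $|\mathcal G^{-1}\nabla\mathcal G|^2$, or the difference of the Chern connections of $\mathcal G$ and $\widetilde{\mathcal G}$. Using the equation from Step~1, it satisfies
\[
(\partial_\sigma-\tfrac12\Delta)|\Upsilon|^2\le-|\nabla\Upsilon|^2+C|\Upsilon|^2+C.
\]
Meanwhile $\operatorname{tr}_{\widetilde{\mathcal G}}\mathcal G$ produces a good term $-c|\Upsilon|^2$. I would apply the maximum principle to
\[
\psi=\eta(x)\,(\sigma-\sigma_0/2)_+\,|\Upsilon|^2+A\,\operatorname{tr}_{\widetilde{\mathcal G}}\mathcal G,
\]
where $\eta$ is a cutoff supported in $\mathbb B_R$. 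The time weight makes $\psi$ vanish at the initial time of the cutoff, so no derivatives of the initial data enter. The cutoff errors $|\nabla\eta|^2/\eta$ are absorbed by the good term. This yields $|\Upsilon|^2\le C(\Gamma,\sigma_0,R)$ on a smaller ball for $\sigma\ge\sigma_0$.

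\textbf{Step 4: higher derivatives.} With $\mathcal G$ now bounded in $C^1$, the equation is uniformly parabolic with bounded coefficients. Following \cite[Theorem~4.2]{JordanStreets}, I would run inductive Shi-type estimates on $|\nabla^k\Upsilon|^2$ with the weights $(\sigma-\sigma_0')^k$; alternatively one can use parabolic Schauder bootstrapping. At each stage the radius shrinks and the start time increases slightly, through finitely many intermediate radii between $R$ and $R/2$ and start times between $0$ and $\sigma_0$. This gives $C^k$ bounds on $\mathcal G$ on $\mathbb B_{R/2}\times[\sigma_0,\tau)$. Since $g^{-1}$ and $\beta g^{-1}$ are entries of $\mathcal G$ and $g$ is uniformly equivalent to the background, these bounds pass to all derivatives of $g$ and $\beta$ of order at most $k$. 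The constants depend only on $k,\Gamma,\sigma_0,R$ and the background geometry.
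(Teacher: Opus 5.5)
Your proposal is correct and takes the same route as the paper. The paper does not reprove this statement: it imports it as a consequence of the local Calabi-type estimate and the higher-order smoothing estimates in \cite[Lemma~3.7 and Theorem~4.2]{JordanStreets}. Your Steps~1--4 (matching the normalization of \eqref{eq:JS-component-system}, $C^0$ bounds on $g$ and $\beta$ from the two-sided bound on $\mathcal G$, the time-weighted cutoff argument for $|\Upsilon|^2$ with $\operatorname{tr}_{\widetilde{\mathcal G}}\mathcal G$, and the inductive higher-order estimates) are a faithful unpacking of that citation.
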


\section{The extended Mabuchi functional}\label{sec:mabuchi}

\begingroup
In this section we introduce an extended Mabuchi functional and
prove its convexity on the smooth admissible class,
properness, and monotonicity along the flow
\eqref{eq:intro-flow}. These estimates give compactness of normalized
potentials in $L^1(P)$.

Inspired by Tian--Zhu's work~\cite{TianZhu} on the modified Mabuchi
$K$-energy and Apostolov--Streets--Ustinovskiy's work on generalized
K\"ahler metrics~\cite[Definition~5.8 and Lemma~5.10]{ASU-toric},
we introduce the following functional.

\begin{definition}\label{def:M}
For a smooth admissible pair
$(u,B)\in\mathcal G_\T(P)$, set $U:=D^2u$. An extended
Mabuchi functional on $\mathcal G_\T(P)$ is defined by
\[
 \cM_\xi(u,B)
 =2\int_Px\cdot\nabla u\,\rho_\xi\dd x
 -\int_P\log\det(U+\ii B)\,\rho_\xi\dd x.
\]
Here $\rho_\xi\dd x=e^{\langle\xi,x\rangle}\dd x$ is the
soliton-weighted Lebesgue measure fixed by \eqref{eq:barycenter}.
In particular, $\rho_0\equiv1$.
\end{definition}

When $\xi=0$, Definition~\ref{def:M} agrees, up to the normalization
in \eqref{eq:intro-flow}, with the toric formula for the extended
Mabuchi energy of Apostolov--Streets--Ustinovskiy
\cite[Definition~5.8 and Lemma~5.10]{ASU-toric}.
Restricting Definition~\ref{def:M} to $B=0$ gives
\begin{equation}\label{eq:M-B-zero-preliminary}
 \cM_\xi(u,0)
 =2\int_Px\cdot\nabla u\,\rho_\xi\dd x
 -\int_P\log\det D^2u\,\rho_\xi\dd x.
\end{equation}
This agrees, up to an additive constant, with the toric modified
$K$-energy of Wang--Zhou--Zhu evaluated at $2u$
\cite[Proposition~3.1]{WangZhouZhu}.
\endgroup

\begingroup
\begin{lemma}\label{prop:M-basic}
The functional $\cM_\xi(u,B)$ is finite and invariant under
addition of affine functions. Moreover,
       \begin{equation}\label{eq:M-comparison}
        \cM_\xi(u,B)
        =\cM_\xi(u,0)
        +\int_P\log\frac{\det U}{\det(U+\ii B)}\,
          \rho_\xi\dd x
        \geq \cM_\xi(u,0).
       \end{equation}
\end{lemma}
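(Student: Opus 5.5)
The plan is to reduce everything to pointwise linear algebra on $P^\circ$ via the skew-normal form \eqref{eq:skew-normal-form}, and then to handle boundary integrability using the Guillemin structure of $u$.

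\textbf{Step 1: pointwise factorization.} Write $U+\ii B=U^{\frac12}(I+\ii S)U^{\frac12}$ with $S=U^{-\frac12}BU^{-\frac12}$. This gives
\[
 \det(U+\ii B)=\det U\cdot\det(I+\ii S)=\det U\prod_\alpha(1-s_\alpha^2).
\]
The determinant is real and positive, so $\log\det(U+\ii B)$ is a well-defined real function on $P^\circ$. Since $0\le s_\alpha<1$, we have $\log\frac{\det U}{\det(U+\ii B)}=-\sum_\alpha\log(1-s_\alpha^2)\ge0$ pointwise. Subtracting Definition~\ref{def:M} from \eqref{eq:M-B-zero-preliminary} then gives the identity \eqref{eq:M-comparison}. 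The inequality follows because $\rho_\xi>0$, provided all the integrals are finite.

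\textbf{Step 2: finiteness.}
\begin{itemize}
\item The term $\int_P x\cdot\nabla u\,\rho_\xi\,dx$ is finite. Indeed, $\nabla u_c=\frac12\sum_r(\log L_r+1)\nu_r$ has only logarithmic singularities, which are integrable on the bounded polytope, and $\nabla v$ is bounded.
\item For $\int_P\log\det D^2u\,\rho_\xi$, I will use the standard Guillemin/Abreu fact that $\det D^2u=\delta(x)\prod_r L_r^{-1}$ with $\delta$ smooth and positive on $\bar P$. Hence $\log\det U=\log\delta-\sum_r\log L_r$ is integrable.
\item It remains to show $\log\det(I+\ii S)$ is integrable. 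This is the main obstacle, since a priori $s_\alpha\to1$ could occur near $\partial P$. I would use that the compactification criterion requires positivity of $U+\ii B$ after restriction to every face, uniformly up to the boundary. Concretely, $(U+\ii B)^{-1}=X+\ii Y$ is smooth on $\bar P$ with $X$ positive definite on $\bar P$ in the appropriate sense, in parallel with $U^{-1}=\bar H$, which is smooth on $\bar P$ (these are the smooth metric coefficients). Then
\[
 \det(I+\ii S)=\frac{\det U^{-1}}{\det(U+\ii B)^{-1}}.
\]
Both determinants are smooth on $\bar P$ and vanish to the same order $\prod_r L_r$ at the boundary, because the facewise positivity ensures that $B$ does not affect the degenerating normal directions. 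So the ratio is smooth and positive on $\bar P$, hence bounded away from $0$. If establishing this rigorously is delicate, a fallback is to use only the one-sided bound $0\le\log\frac{\det U}{\det(U+\ii B)}$. This puts the integrand in $[0,\infty]$, so the identity holds in the extended sense, and finiteness reduces to an upper bound that again comes from the smooth extension of $X+\ii Y$.
\end{itemize}

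\textbf{Step 3: affine invariance.} Adding an affine function $\ell$ to $u$ leaves $U$, and hence the log-determinant term, unchanged. It changes the first term by $2\int_P\langle x,\nabla\ell\rangle\rho_\xi\,dx=2\langle\nabla\ell,\int_Px\rho_\xi\,dx\rangle$, which vanishes by the barycenter normalization \eqref{eq:barycenter}. Thus $\cM_\xi$ descends to $\mathcal G_\T(P)$.
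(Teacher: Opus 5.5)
Your proposal is correct and is essentially the paper's proof: the skew-normal factorization $\det(U+\ii B)=\det U\prod_\alpha(1-s_\alpha^2)$ gives the comparison, the barycenter condition \eqref{eq:barycenter} gives affine invariance, and finiteness comes from the logarithmic boundary asymptotics. The one step you leave heuristic is that $\det(U+\ii B)$ blows up exactly like $\prod_\alpha L_{F_\alpha}^{-1}$, so that $\det(I+\ii S)$ stays bounded below up to $\partial P$. The paper obtains this from the Schur-complement block factorization used in Lemma~\ref{lem:facet}, whose tangential block is positive definite by facewise positivity, giving $\log\det(U+\ii B)=-\sum_\alpha\log L_{F_\alpha}+O(1)$. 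Note also that your ``fallback'' needs this same lower bound, so it is not a genuine alternative.
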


\begin{proof}
 Let $L_1=0,\ldots,L_r=0$ be the facets meeting along a boundary
 stratum of codimension $r$. The Guillemin model gives
 \[
  \log\det(U+\ii B)
  =-\sum_{\alpha=1}^r\log L_\alpha+O(1).
 \]
 Here $O(1)$ denotes a term bounded up to the stratum.
 Thus the logarithmic singularities are integrable and so
 $\cM_\xi(u,B)$ is finite. If $\ell=c+a\cdot x$, then using the
 weighted barycenter condition \eqref{eq:barycenter}, we obtain
 \[
  \cM_\xi(u+\ell,B)-\cM_\xi(u,B)
  =2a\cdot\int_Px\rho_\xi\dd x=0.
 \]

Recall the matrix $S=U^{-\frac12}BU^{-\frac12}$ in
\eqref{eq:skew-normal-form}. Then positivity implies $s_\alpha<1$,
and it follows that
\[
 \frac{\det(U+\ii B)}{\det U}
 =\det(I+\ii S)=\prod_\alpha(1-s_\alpha^2)\leq1,
\]
with equality if and only if $B=0$. Combining this with
\eqref{eq:M-B-zero-preliminary} establishes \eqref{eq:M-comparison}.
\end{proof}
\endgroup

\begingroup
\subsection{Monotonicity of $\cM_\xi(\cdot,\cdot)$}

In this section, we compute the variation of $\cM_\xi(\cdot,\cdot)$
and then get the monotonicity along \eqref{eq:intro-flow}.
We need to introduce some notation. Set
\begin{equation}\label{eq:matrix-notation}
 (U+\ii B)^{-1}=:X+\ii Y,\qquad
 \Lambda:=\log\det(U+\ii B),\qquad
 \Lambda_i:=\partial_i\Lambda.
\end{equation}
Thus $X=\operatorname{Re}\bigl((U+\ii B)^{-1}\bigr)$.
Moreover, we have

\begin{lemma}\label{lem:piola}
\begin{equation}\label{eq:piola}
 \partial_iX^{ij}=-X^{ij}\Lambda_i.
\end{equation}
\end{lemma}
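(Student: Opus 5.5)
The plan is to prove the identity first for the full complex inverse $W^{-1}=X+\ii Y$ of $W:=U+\ii B$, and then take real parts. Two structural facts will be used. First, $B$ is a constant skew matrix on $P^\circ$ (for fixed $t$), so every derivative of $W$ comes from $u$ alone: $\partial_iW_{ab}=u_{abi}$, and this is totally symmetric in $(a,b,i)$. Second, $W$ is Hermitian, since $U$ is real symmetric and $\ii B$ is Hermitian; by \eqref{eq:positivity} it is positive definite. Hence $\det W>0$ and $\Lambda=\log\det W$ is a real smooth function on $P^\circ$.

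\textbf{Step 1: derivative of $\Lambda$.} By Jacobi's formula,
\[
\Lambda_b=\tr\bigl(W^{-1}\partial_bW\bigr)=(W^{-1})^{ia}\,u_{aib}.
\]

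\textbf{Step 2: divergence of $W^{-1}$.} Differentiating the inverse gives
\[
\partial_i(W^{-1})^{ij}=-(W^{-1})^{ia}\,(\partial_iW_{ab})\,(W^{-1})^{bj}=-(W^{-1})^{ia}\,u_{abi}\,(W^{-1})^{bj}.
\]
By total symmetry, $u_{abi}=u_{aib}$, so the contraction $(W^{-1})^{ia}u_{abi}$ equals $\Lambda_b$ from Step 1. Therefore
\[
\partial_i(W^{-1})^{ij}=-\Lambda_b\,(W^{-1})^{bj}.
\]

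\textbf{Step 3: real parts.} Since $\Lambda_b$ is real, taking real parts of the last identity gives $\partial_iX^{ij}=-\Lambda_bX^{bj}$. The matrix $X$ is symmetric (as noted after \eqref{eq:X-formula}), so $X^{bj}=X^{jb}$, and after relabelling the dummy index this is exactly \eqref{eq:piola}. As a byproduct, the imaginary part gives the companion identity $\partial_iY^{ij}=-\Lambda_bY^{bj}$.

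The only point needing care is the index bookkeeping, because $W$ is not symmetric ($W^{\mathsf T}=U-\ii B$). The contraction in Step 2 must therefore be matched to the trace in Step 1 in the correct order. This works because $\partial W=D^3u$ is fully symmetric, which makes the order of the two $W^{-1}$ factors irrelevant. The rest is routine.
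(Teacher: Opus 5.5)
Your proof is correct and is essentially the paper's argument. You differentiate the complex inverse $(U+\ii B)^{-1}$, use that $\partial_k(U+\ii B)_{ij}=u_{ijk}$ is totally symmetric because $B$ is constant, identify the resulting contraction with $\partial_b\log\det(U+\ii B)$ via Jacobi's formula, and take real parts since that determinant is real and positive. The symmetry of $X$ invoked in Step~3 is not actually needed: relabelling $b\to i$ in $-\Lambda_bX^{bj}$ already gives $-X^{ij}\Lambda_i$.
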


\begin{proof}
 Since $B$ is constant, $\partial_k(U+\ii B)_{ij}=u_{ijk}$
is symmetric in all three indices.  Differentiating $(U+\ii B)^{-1}$
and contracting, we obtain
\[
 \partial_i\bigl((U+\ii B)^{-1}\bigr)^{ij}
 =-\bigl((U+\ii B)^{-1}\bigr)^{bj}
   \partial_b\log\det(U+\ii B).
\]
The determinant is positive and real by \eqref{eq:positivity}.
Taking real parts yields \eqref{eq:piola}.
\end{proof}
\endgroup

The following boundary identity is due to the compactification
criterion of Apostolov--Streets--Ustinovskiy~\cite[Lemma 6.3(iii)$'$]{ASU-space}.
For completeness, we give a direct proof below.

\begin{lemma}\label{lem:facet}
	For each facet $F=\{L_F=0\}$, let $\nu_F=\dd L_F$
	be its inward normal covector. Then the matrix-valued
	functions $X$, $Y$, and $\hat C$ extend smoothly to
	$\bar P$ and satisfy
\begin{equation}\label{eq:zero-flux}
 (\nu_F)_iX^{ij}=(\nu_F)_iY^{ij}
 =(\nu_F)_i\hat C^i{}_k=0
 \qquad\text{on }F.
\end{equation}
\end{lemma}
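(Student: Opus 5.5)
We work locally near a point $x_0$ in the relative interior of the facet $F$, using the Guillemin boundary behaviour of $u\in\mathcal S(P)$. Choose affine coordinates on $\mathfrak t^*$ adapted to $F$: set $x_1=L_F$ and let $x'=(x_2,\dots,x_m)$ be coordinates along $F$, so that $\nu_F=\dd x_1$. The Guillemin condition gives
\[
 U=D^2u=\begin{pmatrix}\frac{1}{2x_1}+a & c^{\mathsf T}\\ c & W\end{pmatrix},
\]
where $a$, $c$ and $W$ are smooth up to $x_1=0$. Moreover, $W|_{F}$ is the Hessian of the restriction of $u$ to $F$, which is positive definite on the relative interior. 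Points of lower-dimensional faces are handled the same way, with several singular diagonal entries $\frac{1}{2L_\alpha}$; the identity on $F$ is only needed on the relative interior and then follows by continuity. The plan is to factor out the singular part explicitly. Put $D=\operatorname{diag}\bigl((2x_1)^{-1/2},1,\dots,1\bigr)$ and write $U+\ii B=D\,(E+R)\,D$, where $E=\operatorname{diag}(1,0,\dots,0)$.

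Next we compute $R$. Since $B$ is constant, $D^{-1}BD^{-1}$ has first row and column multiplied by $\sqrt{2x_1}$, and $D^{-1}UD^{-1}=E+O(x_1)$ in the $(1,1)$ slot, $O(\sqrt{x_1})$ in the mixed slots, and $W$ in the tangential block. Hence
\[
 E+R=\begin{pmatrix}1+2x_1a & \sqrt{2x_1}\,(c+\ii b)^{\mathsf T}\\ \sqrt{2x_1}\,(c-\ii b) & W+\ii B'\end{pmatrix},
\]
where $b$ is the first column of $B$ off the diagonal and $B'$ is the tangential block of $B$. At $x_1=0$ this matrix is $1\oplus(W+\ii B')$. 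By the facewise positivity in \eqref{eq:positivity}, $W+\ii B'>0$ on $F$, so the matrix is invertible near $x_0$.

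Inverting gives $(U+\ii B)^{-1}=D^{-1}(E+R)^{-1}D^{-1}$. The matrix $(E+R)^{-1}$ is a smooth function of $(\sqrt{x_1},x')$, and multiplying its first row and column by $\sqrt{2x_1}$ gives $X+\ii Y$. The first row of $X+\ii Y$ is then $\sqrt{2x_1}$ times a smooth function of $\sqrt{x_1}$, and it vanishes at $x_1=0$. This is \eqref{eq:zero-flux} for $X$ and $Y$, since $(\nu_F)_iX^{ij}=X^{1j}$.

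The main obstacle is smoothness in $x_1$ itself rather than in $\sqrt{x_1}$. To handle it, I would check parity in $\sqrt{x_1}$. The conjugation by $\operatorname{diag}(-1,1,\dots,1)$ shows that the entries of $(E+R)^{-1}$ are even in $\sqrt{x_1}$ in the diagonal blocks and odd in the off-diagonal blocks. After the rescaling by $D^{-1}$, every entry of $X+\ii Y$ is even in $\sqrt{x_1}$, hence smooth in $x_1$. For example, the entry $(1,1)$ becomes $2x_1\cdot(\text{even})$, and the mixed entries become $\sqrt{2x_1}\cdot(\text{odd})$. Finally, $\hat C=XU-I$. Here $(\nu_F)_i\hat C^i{}_k=X^{1j}U_{jk}-\delta^1_k$, and $(XU)$ can be rewritten using $(X+\ii Y)(U+\ii B)=I$, which gives $XU-YB=I$ and hence $\hat C=YB$. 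Since $B$ is constant and $Y$ is smooth with $Y^{1j}=0$ on $F$, it follows that $\hat C$ is smooth and $(\nu_F)_i\hat C^i{}_k=Y^{1j}B_{jk}=0$ on $F$.
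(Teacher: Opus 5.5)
Your argument is correct, but it takes a different route from the paper's.

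\textbf{The paper's route.} The paper works at a point of an arbitrary codimension-$r$ face. It takes the Schur complement of the nonsingular tangential block, $\hat S=\hat L^{-1}+R$ with $\hat L=2\operatorname{diag}(L_{F_1},\dots,L_{F_r})$, and uses the one-sided factorizations $\hat S^{-1}=(I+\hat LR)^{-1}\hat L=\hat L(I+R\hat L)^{-1}$. Since $I+\hat LR$ is obviously invertible where $\hat L=0$, this gives at once smoothness in $x$ and divisibility of the $\alpha$th row and column by $L_{F_\alpha}$. No square roots appear, and the argument is uniform over all faces, including vertices. That uniformity matters, because the lemma asserts smooth extension to all of $\bar P$. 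The same factorization is reused later, for the boundary terms in the integration by parts and in Remark~\ref{rem:integrability}.

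\textbf{Your route.} Your symmetric rescaling $U+\ii B=D(E+R)D$ keeps the matrix Hermitian, so the limit $1\oplus(W+\ii B')$ is visibly positive. The cost is that $\sqrt{x_1}$ enters, and two points need tightening.
\begin{enumerate}
\item The step ``even in $\sqrt{x_1}$, hence smooth in $x_1$'' is Whitney's theorem on smooth even functions, with parameters, and should be cited. Alternatively, use the adjugate formula: each entry is a polynomial in $\sqrt{x_1}$ with coefficients smooth in $x$, divided by a determinant that is even. Such an even polynomial is directly a smooth function of $x$.
\item At a codimension-$r$ face, ``the same way'' means rescaling by every $\sqrt{2x_\alpha}$ and proving evenness separately in each $\sqrt{x_\alpha}$, by conjugating with the sign change in slot $\alpha$. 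This works, but it should be written out, since it is needed for smoothness on all of $\bar P$.
\end{enumerate}

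\textbf{A simpler fix.} You can avoid both issues with a one-sided rescaling. Multiply the first row of $U+\ii B$ by $2x_1$. At $x_1=0$ the result is block lower-triangular with diagonal blocks $1$ and $W+\ii B'$, hence invertible. Right-multiplying its inverse by $\operatorname{diag}(2x_1,1,\dots,1)$ recovers $(U+\ii B)^{-1}$, smooth and with first column divisible by $x_1$. The first row then follows from the symmetry of $X$ and the skew-symmetry of $Y$. This is exactly the paper's mechanism.

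Your final step, deriving $\hat C=YB$ from the real part of $(X+\ii Y)(U+\ii B)=I$, is the same as the paper's.
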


\begin{proof}
Let $z_0$ lie in the relative interior of a
codimension-$r$ face contained in $F_1,\ldots,F_r$.
Since $P$ is Delzant, choose affine coordinates with
$x_\alpha=L_{F_\alpha}$ for $1\leq\alpha\leq r$ and the
remaining coordinates tangent to the face.
By the Guillemin boundary condition, we have
 \[
  u=\frac12\sum_{\alpha=1}^r
      L_{F_\alpha}\log L_{F_\alpha}+h,
  \qquad h\text{ smooth near }z_0.
 \]
 Set $\hat L:=2\,\operatorname{diag}(L_{F_1},\ldots,L_{F_r})$.
 With respect to the normal--tangential splitting, write
 \[
  U+\ii B=
  \begin{pmatrix}
   \hat L^{-1}+R_0&Q^*\\
   Q&H
  \end{pmatrix}.
 \]
 The blocks $R_0,Q,H$ are smooth. For $r<m$,
 facewise positivity makes $H$ positive definite near $z_0$, so
 $H^{-1}$ is smooth there.
\begingroup
For $r<m$, put $R=R_0-Q^*H^{-1}Q$.
When $r=m$, the blocks $Q,H$ are absent and we put $R=R_0$.
In both cases, set $\hat S=\hat L^{-1}+R$.  Then
\endgroup
 \begin{equation}\label{eq:corner-Schur-factorization}
  \hat S^{-1}=(I+\hat L R)^{-1}\hat L
  =\hat L(I+R\hat L)^{-1},
 \end{equation}
 while for $r<m$ direct block inversion gives
 \[
  (U+\ii B)^{-1}
  =\begin{pmatrix}
    \hat S^{-1}
      &-\hat S^{-1}Q^*H^{-1}\\
    -H^{-1}Q\hat S^{-1}
      &H^{-1}+H^{-1}Q\hat S^{-1}Q^*H^{-1}
   \end{pmatrix}.
 \]
 For $r=m$, one simply has $(U+\ii B)^{-1}=\hat S^{-1}$.
 Since $\hat L=0$ at $z_0$, the inverses on the right-hand side of
 \eqref{eq:corner-Schur-factorization} are smooth near $z_0$.
 The block formula shows that $(U+\ii B)^{-1}$ extends smoothly across
 the face.
\begingroup
For $1\leq\alpha\leq r$, the factorization with $\hat L$ on the
right shows that the $\alpha$th column of $\hat S^{-1}$ is
divisible by $L_{F_\alpha}$; the factorization with $\hat L$ on
the left gives the same property for the $\alpha$th row.
By the block inverse formula, these divisibilities also hold for
$(U+\ii B)^{-1}$. Hence its $\alpha$th row and column
vanish on $F_\alpha$.

In these coordinates, $\nu_{F_\alpha}=\dd x_\alpha$.
Taking real and imaginary parts and using $\hat C=YB$, we obtain
the smooth extensions and \eqref{eq:zero-flux}.
\endgroup

\begingroup
Choose a Euclidean metric with volume form $\dd x$, and identify
vectors and covectors using this metric. Let $n_F$ and $\dd S$
be the outward unit normal and the induced surface measure on $F$.
Set
\[
 \dd\sigma_F:=\frac{\dd S}{|\nu_F|},
 \qquad n_F\,\dd S=-\nu_F\,\dd\sigma_F.
\]
By \eqref{eq:zero-flux}, the flux vanishes on each facet, so every
smooth map $f:\bar P\to\mathbb R^m$ satisfies
\[
 \sum_F\int_F\rho_\xi(\hat C f)\cdot n_F\,\dd S=0.
\]
\endgroup
\end{proof}

\begingroup
We now compute the first variation of $\cM_\xi(u_t,B_t)$.
Let $v=\delta u\in C^\infty(\bar P)$ and keep $B$ fixed.  Then
\[
 D_u\cM_\xi(u,B)[v]
 =2\int_P\rho_\xi x\cdot\nabla v\dd x
 -\int_P\rho_\xi X^{ij}v_{ij}\dd x.
\]
Set $p=\dot u-\langle\xi,x\rangle$.
Taking the gradient of \eqref{eq:intro-flow} and using
$\nabla(x\cdot\nabla u)=\nabla u+Ux$, we find
\[
 \nabla\dot u=\nabla\Lambda-2Ux.
\]
Since $\nabla p=\nabla\dot u-\xi$, it follows that
\begin{equation}\label{eq:gradL}
 \nabla\Lambda-\xi=\nabla p+2Ux.
\end{equation}

By Lemma~\ref{lem:piola},
\[
 \partial_i(\rho_\xi X^{ij})
 =\rho_\xi X^{ij}(\xi_i-\Lambda_i).
\]
Integrating by parts over
$P_\varepsilon=\{L_r\geq\varepsilon\text{ for all }r\}$ gives
\[
 D_u\cM_\xi(u,B)[v]
 =-\int_P\rho_\xi
   \{X(\nabla\Lambda-\xi)-2x\}\cdot\nabla v\dd x
\]
after letting $\varepsilon\to0^+$.
The boundary term tends to zero by \eqref{eq:zero-flux} and
$n_F\,\dd S=-\nu_F\,\dd\sigma_F$, while the corner estimates
in the proof of Lemma~\ref{lem:facet} justify passage to the limit.
Substituting \eqref{eq:gradL} yields
\begin{equation}\label{eq:u-variation}
 D_u\cM_\xi(u,B)[v]
 =-\int_P\rho_\xi\{X\nabla p+2(XU-I)x\}\cdot\nabla v\dd x.
\end{equation}
\endgroup

The $B$-variation is given by
\begin{equation}\label{eq:B-variation}
 D_B\cM_\xi(u,B)[\dot B]
 =\int_P\rho_\xi\tr(Y\dot B)\dd x.
\end{equation}
Indeed,
\[
 \delta_B\log\det(U+\ii B)
 =\tr\bigl((X+\ii Y)\ii\dot B\bigr)
 =-\tr(Y\dot B),
\]
because the contraction of the symmetric matrix $X$ with the
skew-symmetric matrix $\dot B$ vanishes.

Since $\dot u-p=\langle\xi,x\rangle$ is affine,
affine invariance allows us to replace $\dot u$ by $p$ in
\eqref{eq:u-variation}.
Using $\dot B=-2B$ and \eqref{eq:B-variation}, we obtain
\begin{equation}\label{eq:variation-before-square}
\begin{split}
 \frac{\dd}{\dd t}\cM_\xi(u_t,B_t)
 ={}&-\int_P\rho_\xi X(\nabla p,\nabla p)\dd x\\
 &-2\int_P\rho_\xi(\hat Cx)\cdot\nabla p\dd x
 -2\int_P\rho_\xi\tr \hat C\dd x.
\end{split}
\end{equation}

The second integral on the right-hand side of
\eqref{eq:variation-before-square} is handled by the following
weighted divergence identity.

\begin{lemma}\label{lem:C-div}
 Along \eqref{eq:intro-flow},
\begin{equation}\label{eq:C-div}
 \partial_i(\rho_\xi \hat C^i{}_k)
 =-\rho_\xi \hat C^i{}_k(p_i+2u_{ij}x_j).
\end{equation}
 Consequently,
\begin{equation}\label{eq:mixed-global}
 \int_P\rho_\xi(\hat Cx)\cdot\nabla p\dd x
 =\int_P\rho_\xi\tr \hat C\dd x
 -2\int_P\rho_\xi x^TU\hat Cx\dd x.
\end{equation}
\end{lemma}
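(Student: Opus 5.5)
The identity \eqref{eq:C-div} is pointwise and algebraic: it comes from Lemma~\ref{lem:piola}, the flow relation \eqref{eq:gradL}, and the fact that $B$ is constant. The global identity \eqref{eq:mixed-global} then follows by integrating it against $x$ and using the zero-flux property of Lemma~\ref{lem:facet}.

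\emph{Step 1: an unweighted divergence identity.} By \eqref{eq:D-positive}, $\hat C=XU-I$, so $\hat C^i{}_k=X^{ij}u_{jk}-\delta^i_k$. Differentiating and using \eqref{eq:piola} gives
\[
\partial_i\hat C^i{}_k=-X^{ij}\Lambda_iu_{jk}+X^{ij}u_{ijk}.
\]
Since $B$ is constant, $\partial_k(U+\ii B)=\partial_kU$. Because $X$ is the real part of $(U+\ii B)^{-1}$ and $\Lambda$ is real, this yields
\[
X^{ij}u_{ijk}=\operatorname{Re}\tr\bigl((U+\ii B)^{-1}\partial_kU\bigr)=\Lambda_k.
\]
On the other hand, $-\Lambda_i\hat C^i{}_k=-\Lambda_iX^{ij}u_{jk}+\Lambda_k$. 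Hence $\partial_i\hat C^i{}_k=-\Lambda_i\hat C^i{}_k$. This step uses the symmetry of $X$ and the full symmetry of $u_{ijk}$.

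\emph{Step 2: inserting the weight and the flow.} Since $\partial_i\rho_\xi=\xi_i\rho_\xi$, Step 1 gives
\[
\partial_i(\rho_\xi\hat C^i{}_k)=\rho_\xi\hat C^i{}_k(\xi_i-\Lambda_i).
\]
By \eqref{eq:gradL}, $\Lambda_i-\xi_i=p_i+2u_{ij}x_j$. Substituting this is the only place the flow equation enters, and it gives \eqref{eq:C-div}.

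\emph{Step 3: the integrated identity.} Contract \eqref{eq:C-div} with $x_k$. Since $\partial_ix_k=\delta_{ik}$,
\[
\partial_i(\rho_\xi\hat C^i{}_kx_k)=\rho_\xi\tr\hat C-\rho_\xi\hat C^i{}_kx_kp_i-2\rho_\xi u_{ij}x_j\hat C^i{}_kx_k .
\]
In the last term, $u_{ij}x_j\hat C^i{}_kx_k=x^TU\hat Cx$, and in the middle term $\hat C^i{}_kx_kp_i=(\hat Cx)\cdot\nabla p$.

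Integrate over $P_\varepsilon$ and let $\varepsilon\to0^+$. Lemma~\ref{lem:facet} says $\hat C$ extends smoothly to $\bar P$ and $(\nu_F)_i\hat C^i{}_k=0$ on each facet. Applied to the smooth map $f=x$, the closing remark of that proof shows that the boundary flux $\sum_F\int_F\rho_\xi(\hat Cx)\cdot n_F\,\dd S$ vanishes. Rearranging the integrated identity gives \eqref{eq:mixed-global}.

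The main point needing care is the passage $\varepsilon\to0^+$, namely that the interior integrands are integrable. By Lemma~\ref{lem:facet}, $\hat C$ and $X$ are smooth on $\bar P$, while $U$ is singular only like $L^{-1}$ in the normal directions. In $U\hat C$ this singularity is cancelled by the divisibility of the normal rows and columns of $(U+\ii B)^{-1}$ established in that proof. Moreover $\nabla p=\nabla\dot u-\xi$ involves only $\nabla u$, which has merely logarithmic singularities. So all integrands are integrable, and the boundary terms on $\partial P_\varepsilon$ converge to the vanishing facet fluxes, exactly as in the derivation of \eqref{eq:u-variation}.
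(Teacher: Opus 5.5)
Your proof is correct and follows the paper's own argument. The paper likewise differentiates $\hat C=XU-I$ using Lemma~\ref{lem:piola} and the full symmetry of $u_{ijk}$ to get $\partial_i\hat C^i{}_k=-\Lambda_i\hat C^i{}_k$, inserts the weight and \eqref{eq:gradL}, then contracts with $x_k$ and integrates using the zero-flux property \eqref{eq:zero-flux}; you simply supply more detail, including on integrability. One small correction there: $\nabla p$ is actually smooth on $\bar P$, not merely logarithmically singular, since $u_c$ is time-independent and so $\dot u=\dot v\in C^\infty(\bar P)$.
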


\begin{proof}
 Differentiating $\hat C=XU-I$, applying Lemma~\ref{lem:piola}, and using
 the full symmetry of $u_{ijk}$ gives
 $\partial_i\hat C^i{}_k
=-\hat C^i{}_k\Lambda_i$.
 Since $\partial_i \rho_\xi=\xi_i \rho_\xi$,
 equation~\eqref{eq:C-div} follows from \eqref{eq:gradL}.
 Multiplying by $x_k$, integrating over $P$, and
 using \eqref{eq:zero-flux}, we obtain \eqref{eq:mixed-global}.
\end{proof}

By \eqref{eq:mixed-global} together with
Lemma~\ref{lem:C-div}, we prove

\begin{theorem}\label{thm:dissipation}
 Along \eqref{eq:intro-flow},
\begin{equation}\label{eq:exact-dissipation}
\begin{split}
 \frac{\dd}{\dd t}\cM_\xi(u_t,B_t)
 ={}&-\int_P\rho_\xi U^{-1}(\nabla p,\nabla p)\dd x\\
 &-\int_P\rho_\xi(X-U^{-1})
   (\nabla p+2Ux,\nabla p+2Ux)\dd x
 \leq0.
\end{split}
\end{equation}
\end{theorem}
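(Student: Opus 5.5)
The plan is to start from the already-derived formula \eqref{eq:variation-before-square} and rewrite its right-hand side purely algebraically, using Lemma~\ref{lem:C-div} to remove the trace term. Then the nonpositivity will follow from \eqref{eq:D-positive}. No new analysis should be needed beyond what was used in deriving \eqref{eq:variation-before-square} and \eqref{eq:mixed-global}. Throughout I write $D:=X-U^{-1}$, which is symmetric and nonnegative by \eqref{eq:D-positive} and satisfies $DU=\hat C$.

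First I substitute \eqref{eq:mixed-global} into the middle term of \eqref{eq:variation-before-square}. Since $-2\int_P\rho_\xi(\hat Cx)\cdot\nabla p\dd x=-2\int_P\rho_\xi\tr\hat C\dd x+4\int_P\rho_\xi x^TU\hat Cx\dd x$, this gives
\[
 \frac{\dd}{\dd t}\cM_\xi
 =-\int_P\rho_\xi X(\nabla p,\nabla p)\dd x
 -4\int_P\rho_\xi\tr\hat C\dd x
 +4\int_P\rho_\xi x^TU\hat Cx\dd x .
\]
Next I expand the claimed right-hand side of \eqref{eq:exact-dissipation}. Writing $X=U^{-1}+D$, it equals $-\int\rho_\xi X(\nabla p,\nabla p)-4\int\rho_\xi D(\nabla p,Ux)-4\int\rho_\xi D(Ux,Ux)$. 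Using $DU=\hat C$ and the symmetry of $U$ and $D$, we have $D(\nabla p,Ux)=\nabla p\cdot\hat Cx$ and $D(Ux,Ux)=x^TU\hat Cx$. Applying \eqref{eq:mixed-global} once more, now with coefficient $-4$, turns the cross term into $-4\int\rho_\xi\tr\hat C+8\int\rho_\xi x^TU\hat Cx$. Adding everything up reproduces exactly the displayed expression above, which proves the identity \eqref{eq:exact-dissipation}.

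For the sign, $U^{-1}>0$ on $P^\circ$, and $D\geq0$ by \eqref{eq:D-positive}, which comes from formula \eqref{eq:X-formula} together with $S^2\leq0$. Hence both integrands are pointwise nonnegative, and the derivative is $\leq0$.

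The only delicate point is that all integrals are finite, so that the rearrangements are legitimate. By Lemma~\ref{lem:facet}, $X$, $Y$ and $\hat C$ extend smoothly to $\bar P$, and $\nabla p$ is smooth up to the boundary, since $\dot u$ is smooth on $\bar P$ along the flow. The term $\int\rho_\xi U^{-1}(\nabla p,\nabla p)$ is also fine, because $U^{-1}$ extends smoothly (this is the $B=0$ case of the same block computation). In addition, $U\hat Cx=U D U x$, and $D U=\hat C$ is smooth. The remaining concern is the factor $U$ in $x^TU\hat Cx$, which blows up like $L_F^{-1}$ near the facets. I expect this to be handled by the zero-flux property \eqref{eq:zero-flux}: the normal row of $\hat C$ vanishes to first order on $F$, which cancels the $L_F^{-1}$ singularity of $U$. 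Alternatively, one can simply note that \eqref{eq:mixed-global} already asserts this integral is finite. This integrability check is the step I expect to require the most care.
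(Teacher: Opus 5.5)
Your proposal is correct and follows the paper's proof: expand the right-hand side of \eqref{eq:exact-dissipation} using $X-U^{-1}$ and $(X-U^{-1})U=\hat C$, then use \eqref{eq:mixed-global} to match \eqref{eq:variation-before-square}, with the sign coming from \eqref{eq:D-positive}. The only difference is bookkeeping: you apply \eqref{eq:mixed-global} to both expressions to reach a common normal form, whereas the paper converts once. Your check that $x^TU\hat Cx$ is bounded, using the vanishing of the normal row of $\hat C$ from \eqref{eq:zero-flux}, is valid and complements Remark~\ref{rem:integrability}.
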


\begin{proof}
 By \eqref{eq:D-positive},
\[
\begin{split}
 &\int_P\rho_\xi U^{-1}(\nabla p,\nabla p)\dd x
 +\int_P\rho_\xi (X-U^{-1})(\nabla p+2Ux,\nabla p+2Ux)\dd x\\
 &\quad=
 \int_P\rho_\xi X(\nabla p,\nabla p)\dd x
 +4\int_P\rho_\xi(\hat Cx)\cdot\nabla p\dd x
 +4\int_P\rho_\xi x^TU\hat Cx\dd x.
\end{split}
\]
Equation~\eqref{eq:mixed-global} converts the last two terms into
\[
 2\int_P\rho_\xi(\hat Cx)\cdot\nabla p\dd x
 +2\int_P\rho_\xi\tr \hat C\dd x.
\]
Comparison with \eqref{eq:variation-before-square} proves the
identity.  Nonpositivity follows from \eqref{eq:D-positive}.
\end{proof}

\begin{remark}\label{rem:integrability}
 The two densities in \eqref{eq:exact-dissipation} are bounded near
 $\partial P$.  At a codimension-$r$ corner,
 \[
  \Lambda=-\sum_{\alpha=1}^r\log L_{F_\alpha}+h,
  \qquad h\text{ smooth}.
 \]
 Equation \eqref{eq:corner-Schur-factorization} and its $B=0$
 specialization show that, for
 \[
  \eta:=\nabla\Lambda-\xi=\nabla p+2Ux,
 \]
 both $U^{-1}\eta$ and $X\eta$ are bounded.  Since
 \[
  X-U^{-1}=-XBU^{-1}BU^{-1},
 \]
 it follows that
 \[
  (X-U^{-1})(\eta,\eta)
  =-(X\eta)^TB U^{-1}B(U^{-1}\eta)=O(1).
 \]
 The function $p=\dot u-\langle\xi,x\rangle$ is smooth on $\bar P$,
 so $U^{-1}(\nabla p,\nabla p)=O(1)$ as well.
\end{remark}

\begingroup
We now turn to  the study of the convexity and properness of $\cM_\xi$.
Let $u_{\KRS}$ be the symplectic potential of a KR soliton with
vector $\xi$. Since $\cM_\xi(u,0)$ is the toric modified
$K$-energy, $u_{\KRS}$ is a critical point of $\cM_\xi(u,0)$
\cite[Section~5]{TianZhu}. By the convexity of $\cM_\xi(u,0)$
and \eqref{eq:M-comparison}, we obtain
\begin{equation}\label{eq:M-minimum}
 \cM_\xi(u,B)\geq\cM_\xi(u_{\KRS},0).
\end{equation}

Furthermore, we have

\begin{proposition}\label{prop:M-variational-structure}
The functional $\cM_\xi$ has the following properties.
\begin{enumerate}[label=\textup{\roman*)},leftmargin=*,itemsep=0.4\baselineskip]
\item $\cM_\xi(\cdot,\cdot)$ is jointly convex on $\mathcal G_\T(P)$
and is strictly convex modulo addition of affine functions to $u$.
In particular, $(u_{\KRS},0)$ is the unique critical point and
minimizer of $\cM_\xi(\cdot,\cdot)$ modulo affine functions.
Moreover, equality holds in \eqref{eq:M-minimum} if and only if
$B=0$ and $u-u_{\KRS}$ is affine.

\item There are constants $\delta>0$ and $C$ such that for any
$(u,B)\in\mathcal G_\T(P)$,
\begin{equation}\label{eq:generalized-relative-properness}
 \cM_\xi(u,B)\geq\delta\int_P\widehat u\,\rho_\xi\dd x-C,
\end{equation}
where
\begin{equation}\label{normalized-u}
 \widehat u(x):=u(x)-u(0)-\langle\nabla u(0),x\rangle.
\end{equation}
\end{enumerate}
\end{proposition}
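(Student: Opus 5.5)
For i), I will split $\cM_\xi(u,B)$ into its linear part $2\int_P x\cdot\nabla u\,\rho_\xi\dd x$, which is linear in $u$ and independent of $B$, and the term $-\int_P\log\det(U+\ii B)\,\rho_\xi\dd x$. Joint convexity then reduces to pointwise concavity of $(U,B)\mapsto\log\det(U+\ii B)$ on the cone $\{U+\ii B>0\}$ of positive Hermitian matrices. This follows from the concavity of $\log\det$ on positive Hermitian matrices, since $(U,B)\mapsto U+\ii B$ is real-linear. The cone is convex (as noted for $\mathcal G_\T(P)$), and convex combinations of pairs in $\mathcal G_\T(P)$ stay admissible.

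For strictness, I take $(u_0,B_0),(u_1,B_1)$ and the segment $H_s=(1-s)(U_0+\ii B_0)+s(U_1+\ii B_1)$. Then
\[
\frac{d^2}{ds^2}\log\det H_s=-\tr\bigl(H_s^{-1}\dot H H_s^{-1}\dot H\bigr)\le0,
\]
with equality only if $\dot H=0$. So if the second variation of $\cM_\xi$ vanishes, then $D^2(u_1-u_0)=0$ and $B_1=B_0$, i.e.\ $u_1-u_0$ is affine and $B_1=B_0$. Lemma~\ref{prop:M-basic} gives affine invariance, so the statement holds modulo affine functions. I will justify differentiating under the integral using the boundary asymptotics from Lemma~\ref{prop:M-basic}: $\log\det H_s=-\sum\log L_\alpha+O(1)$ uniformly in $s$, and the second derivative is bounded since $H_s^{-1}$ extends smoothly (Lemma~\ref{lem:facet}) and $\dot H=D^2(u_1-u_0)+\ii(B_1-B_0)$ is smooth.

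Criticality of $(u_{\KRS},0)$ comes from the $B=0$ theory, since $u_{\KRS}$ is critical for $\cM_\xi(\cdot,0)$. For the $B$-direction I use \eqref{eq:B-variation}: at $B=0$ we have $Y=0$, so $D_B\cM_\xi=0$. A convex functional that is critical at a point has its minimum there. Uniqueness follows from strict convexity. The equality case of \eqref{eq:M-minimum} combines \eqref{eq:M-comparison}, whose inequality is strict unless $B=0$ (since $\prod(1-s_\alpha^2)<1$ on an open set if $B\ne0$), with the uniqueness of the minimizer of $\cM_\xi(\cdot,0)$ modulo affine functions.

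For ii), \eqref{eq:M-comparison} gives $\cM_\xi(u,B)\ge\cM_\xi(u,0)$, so it suffices to prove \eqref{eq:generalized-relative-properness} for $B=0$. I plan to invoke the known properness of the toric modified $K$-energy on the normalized class, as in Donaldson's and Zhou--Zhu's argument (cf.~\cite{WangZhouZhu,TianZhu}). In outline, that argument goes as follows.
\begin{enumerate}[label=\textup{\arabic*.},leftmargin=*]
\item Write $\cM_\xi(u,0)=\cM_\xi(u_{\KRS},0)+\mathcal L(\widehat u)+\mathcal N$, where, after integrating by parts, the linear part is
\[
\mathcal L(\widehat u)=\int_{\partial P}\widehat u\,\rho_\xi\dd\sigma-\int_P\widehat u\,(\dots)\rho_\xi\dd x .
\]
\item Show the linear part is positive on normalized convex functions, using the soliton equation for $u_{\KRS}$; the nonlinear $\log\det$ part is nonnegative by convexity.
\item Use the Donaldson-type inequality $\mathcal L(\widehat u)\ge\lambda\int_{\partial P}\widehat u\,\rho_\xi\dd\sigma$ for normalized convex $\widehat u$, together with the bound of $\int_P\widehat u$ by $\int_{\partial P}\widehat u$ for convex functions.
\item Handle the concavity of $\log\det$ via the standard trick: $\cM_\xi(u,0)\ge(1-\delta)\cM_\xi(u,0)+\delta\cdot(\text{linear part})$.
\end{enumerate}

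The main obstacle is this final step. I will verify that the needed uniform positivity of the linear functional follows from the existence of the soliton, equivalently from the barycenter condition \eqref{eq:barycenter} and the Fano normalization. If needed, I will quote \cite[Section~4]{HisamotoToric} or the properness in \cite{DarvasRubinstein} restricted to toric potentials, converting $J$-type energy to $\int\widehat u\rho_\xi$ via the Legendre transform.
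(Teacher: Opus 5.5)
Your proposal is correct and follows the paper's proof. For i), the paper likewise differentiates twice along affine segments $K_s=D^2u_s+\ii B_s$ to get $\int_P\rho_\xi\tr(K_s^{-1}\dot K K_s^{-1}\dot K)\dd x\geq0$, with equality only if $\dot K=0$, and it uses that the $B$-derivative vanishes at $B=0$ because $U^{-1}$ is symmetric; ii) is exactly \eqref{eq:M-comparison} combined with the $B=0$ properness of \cite[Proposition~3.2]{WangZhouZhu}.

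For ii), rely on that citation alone, because your sketched internal argument is not accurate as written. Step~4 is equivalent to $\cM_\xi(u,0)\geq(\text{linear part})$, which would need a uniform upper bound on $\int_P\log\det D^2u\,\rho_\xi\dd x$. That bound fails, e.g., for $u=u_c+\lambda|x|^2$ with $\lambda\to\infty$.
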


\begin{proof}
\begingroup
i) Let
\[
 (u_s,B_s)=(1-s)(u_0,B_0)+s(u_1,B_1),\qquad 0\leq s\leq1,
\]
be an affine segment in $\mathcal G_\T(P)$. Set
\[
 K_s=D^2u_s+\ii B_s,\qquad
 \dot K=\frac{\dd K_s}{\dd s}
 =D^2(u_1-u_0)+\ii(B_1-B_0).
\]
Then
$\frac{\dd}{\dd s}\log\det K_s=\tr(K_s^{-1}\dot K)$.
Thus the first variation along this segment is
\[
 \frac{\dd}{\dd s}\cM_\xi(u_s,B_s)
 =2\int_P\rho_\xi x\cdot\nabla(u_1-u_0)\dd x
 -\int_P\rho_\xi\tr(K_s^{-1}\dot K)\dd x.
\]
The first integral and $\dot K$ are independent of $s$.
Differentiating $K_s^{-1}K_s=I$ yields
\[
 \frac{\dd K_s^{-1}}{\dd s}K_s+K_s^{-1}\dot K=0,
 \qquad
 \frac{\dd K_s^{-1}}{\dd s}
 =-K_s^{-1}\dot K K_s^{-1}.
\]
Differentiating once more and using cyclicity of the trace, we obtain
\begin{equation}\label{eq:M-joint-convexity}
\begin{aligned}
 \frac{\dd^2}{\dd s^2}\cM_\xi(u_s,B_s)
 &=-\int_P\rho_\xi
   \tr\!\left(\frac{\dd K_s^{-1}}{\dd s}\dot K\right)\dd x\\
 &=\int_P\rho_\xi
   \tr(K_s^{-1}\dot K K_s^{-1}\dot K)\dd x\\
 &=\int_P\rho_\xi
   \tr\!\left((K_s^{-\frac12}\dot K K_s^{-\frac12})^2\right)\dd x\\
 &=\int_P\rho_\xi
   \left|K_s^{-\frac12}\dot K K_s^{-\frac12}\right|_{\rm HS}^2\dd x
   \geq0.
\end{aligned}
\end{equation}
Here $|W|_{\rm HS}^2:=\tr(W^*W)$, and the last equality holds
because $K_s^{-\frac12}\dot K K_s^{-\frac12}$ is Hermitian.
The smooth extension of $K_s^{-1}$ in Lemma~\ref{lem:facet},
together with $u_1-u_0\in C^\infty(\bar P)$, justifies
differentiation under the integral. Since $\rho_\xi>0$, equality
holds if and only if $\dot K=0$, or equivalently $B_1=B_0$ and
$u_1-u_0$ is affine. This proves joint convexity and strict
convexity modulo affine functions.
\endgroup

At $B=0$, the derivative of $\cM_\xi(u,B)$ in a $B$-direction
$C\in\wedge^2\mathfrak t$ also vanishes:
\[
 \left.\frac{\dd}{\dd s}\right|_{s=0}\cM_\xi(u,sC)
 =-\int_P\rho_\xi\tr(U^{-1}\ii C)\dd x=0,
\]
since $U^{-1}$ is symmetric and $C$ is skew-symmetric.
Thus $(u_{\KRS},0)$ is a critical point of $\cM_\xi(u,B)$
in $\mathcal G_\T(P)$, and is also a minimizer by
\eqref{eq:M-minimum}. Uniqueness follows from the equality condition
in \eqref{eq:M-joint-convexity}.

ii) It was proved in \cite[Proposition~3.2]{WangZhouZhu} that,
under the normalization \eqref{normalized-u},
\[
 \cM_\xi(u,0)\geq\delta\int_P\widehat u\,\rho_\xi\dd x-C.
\]
Combining this with \eqref{eq:M-comparison} proves
\eqref{eq:generalized-relative-properness}.
\end{proof}
\endgroup

The lower bound \eqref{eq:M-minimum} for $\cM_\xi$
yields the following finite total dissipation.

\begin{corollary}\label{cor:finite-D}
\begingroup
Set
\[
\begin{aligned}
 \cD(t):={}&\int_P\rho_\xi U^{-1}(\nabla p,\nabla p)\dd x\\
 &+\int_P\rho_\xi(X-U^{-1})
   (\nabla p+2Ux,\nabla p+2Ux)\dd x,
\end{aligned}
\]
where $U=D^2u_t$, $X=\operatorname{Re}((U+\ii B_t)^{-1})$, and
$p=\dot u_t-\langle\xi,x\rangle$.
\endgroup
Then
\begin{equation}\label{eq:finite-D}
 \int_0^\infty\cD(t)\dd t<\infty,
 \qquad
 \int_T^\infty\cD(t)\dd t\longrightarrow0
 \quad(T\to\infty).
\end{equation}
\end{corollary}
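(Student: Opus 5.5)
The plan is to integrate the dissipation identity of Theorem~\ref{thm:dissipation} in time and use the uniform lower bound \eqref{eq:M-minimum}. By Theorem~\ref{thm:dissipation}, $\frac{\dd}{\dd t}\cM_\xi(u_t,B_t)=-\cD(t)$, and by \eqref{eq:D-positive} and positivity of $U^{-1}$ we have $\cD(t)\geq0$. Integrating from $0$ to $T$ gives
\[
 \int_0^T\cD(t)\dd t=\cM_\xi(u_0,B_0)-\cM_\xi(u_T,B_T)
 \leq\cM_\xi(u_0,B_0)-\cM_\xi(u_{\KRS},0),
\]
where the inequality is \eqref{eq:M-minimum} applied to the admissible pair $(u_T,B_T)\in\mathcal G_\T(P)$. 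The right-hand side is finite by Lemma~\ref{prop:M-basic} and is independent of $T$. Letting $T\to\infty$, monotone convergence gives $\int_0^\infty\cD\dd t<\infty$. The tail statement then follows because the tail of a convergent integral of a nonnegative function tends to zero.

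The only points needing care are regularity. First, $t\mapsto\cM_\xi(u_t,B_t)$ must be $C^1$ (or at least absolutely continuous), with derivative given by \eqref{eq:exact-dissipation}. Second, $(u_t,B_t)$ must remain in $\mathcal G_\T(P)$ for every $t$. The second point holds because the flow of \cite{ASU-toric} produces smooth generalized K\"ahler structures of symplectic type for all $t>0$, so the compactification criterion persists. For the first point, write $u_t=u_c+v_t$ with $v_t$ smooth on $\bar P\times[0,T]$. The $t$-derivative of the integrand of $\cM_\xi$ is then $2x\cdot\nabla\dot v-\tr\bigl((U+\ii B)^{-1}(D^2\dot v-2\ii B)\bigr)$, which is bounded on $\bar P$ uniformly on compact time intervals by the smooth extension in Lemma~\ref{lem:facet}. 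Hence differentiation under the integral is justified, and the integration by parts was already handled there and in Remark~\ref{rem:integrability}.

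I expect this uniform-in-$t$ boundedness near corners to be the only mild obstacle. It is resolved by the Schur factorization \eqref{eq:corner-Schur-factorization}, whose constants depend continuously on $t$ through the smooth data $v_t$ and $B_t$.
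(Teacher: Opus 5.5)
Your proposal is correct and follows the paper's argument: integrate $\frac{\dd}{\dd t}\cM_\xi(u_t,B_t)=-\cD(t)\le 0$ from Theorem~\ref{thm:dissipation} against a uniform lower bound for $\cM_\xi$, then use the vanishing of tails of a nonnegative integrable function. The only difference is the lower bound you cite. The paper takes $\cM_\xi(u_t,B_t)\geq-C$ from the properness estimate \eqref{eq:generalized-relative-properness} together with $\widehat u_t\geq0$, while you use \eqref{eq:M-minimum} directly; your extra justification of differentiating under the integral via Lemma~\ref{lem:facet} is a harmless addition the paper leaves implicit.
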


\begingroup
\begin{proof}
By the properness estimate \eqref{eq:generalized-relative-properness},
$\cM_\xi(u_t,B_t)\geq-C$.
By definition of $\cD(t)$, the identity
\eqref{eq:exact-dissipation} becomes
$\frac{\dd}{\dd t}\cM_\xi(u_t,B_t)=-\cD(t)$.
Integrating from $0$ to $T$, we obtain
\[
 \int_0^T\cD(t)\dd t
 =\cM_\xi(u_0,B_0)-\cM_\xi(u_T,B_T)
 \leq\cM_\xi(u_0,B_0)+C.
\]
Since $\cD(t)\geq0$, letting $T\to\infty$ proves that
$\cD\in L^1([0,\infty))$. The second assertion in
\eqref{eq:finite-D} is then the vanishing of the tails of a
nonnegative integrable function.
\end{proof}
\endgroup

\begingroup
By Proposition~\ref{prop:M-variational-structure} and
Theorem~\ref{thm:dissipation}, we obtain compactness results for
$u_t$ in $L^1(P)$, and hence for $\varphi_t$ in
$\mathcal E^1$; see \cite{BB,ToricPluripotential,LTZ}.
For the reader's convenience, we give a self-contained proof below,
following the arguments in
\cite[Proposition~3.2 and Lemma~3.3]{WangZhouZhu}
and \cite[Section~4]{ZhouZhuWeak}.
\endgroup

\subsection{Compactness of $\varphi_t$}
For the reference form $\omega_0$ fixed above, set
$V=\int_M\omega_0^m$.
In our convention,
\[
 \operatorname{PSH}(M,\omega_0)
 :=\{\varphi\in L^1(M,\omega_0^m):
       \varphi\text{ is upper semicontinuous},\quad
       \omega_0+\ddcJ\varphi\geq0\}.
\]
For such a potential, set $\varphi_\ell=\max\{\varphi,-\ell\}$ and
define its non-pluripolar Monge--Amp\`ere measure by
\[
 \operatorname{MA}_{\omega_0}(\varphi)
 :=\lim_{\ell\to\infty}
    \mathbf1_{\{\varphi>-\ell\}}
    (\omega_0+\ddcJ\varphi_\ell)^m.
\]
The finite-energy space $\mathcal E^1$ is defined by
\cite{GuedjZeriahiEnergy,BBEGZ},
\[
 \mathcal E^1=\mathcal E^1(M,\omega_0)
 =\left\{\varphi\in\operatorname{PSH}(M,\omega_0):
     \int_M\operatorname{MA}_{\omega_0}(\varphi)=V,\quad
     \int_M|\varphi|\,\operatorname{MA}_{\omega_0}(\varphi)<\infty
     \right\}.
\]
Recall that the Aubin--Mabuchi energy is given by
\cite{MabuchiKenergy,Ti87},
\[
 E_{\rm AM}(\varphi)
 =\frac{1}{(m+1)V}\sum_{r=0}^m
   \int_M\varphi\,\omega_\varphi^r\wedge\omega_0^{m-r}.
\]
Here $\omega_\varphi=\omega_0+\ddcJ\varphi$.
In particular, $E_{\rm AM}(\varphi+c)=E_{\rm AM}(\varphi)+c$.
This formula is also well-defined for bounded
$\omega_0$-plurisubharmonic potentials, and for general potentials
we set
$E_{\rm AM}(\varphi)=\lim_{\ell\to\infty}E_{\rm AM}(\varphi_\ell)$.
Then $\varphi\in\mathcal E^1$ if and only if
$E_{\rm AM}(\varphi)>-\infty$.
In our case of toric manifolds, denote 
\[
 \mathcal E^1_\T
 =\{\varphi\in\mathcal E^1:
        \varphi\circ\tau=\varphi\text{ for every }\tau\in\T\}.
\]
We write $C^\infty(M)^\T$ for the space of smooth real-valued
functions $f$ on $M$ satisfying $f\circ\tau=f$ for every $\tau\in\T$.
In particular,
\[
 \mathcal H_\T
 :=\{\varphi\in C^\infty(M)^\T:\omega_\varphi>0\}
 \subset\mathcal E^1_\T\subset\mathcal E^1,
 \qquad 0\in\mathcal H_\T.
\]
Moreover, Legendre duality gives the following characterization
of the $\mathcal E^1_\T$-space
\cite[Proposition~3.9]{ToricPluripotential}
(also see \cite{BB,LTZ}),
\begin{equation}\label{eq:toric-finite-energy}
 \varphi\in\mathcal E^1_\T
 \quad\Longleftrightarrow\quad
 u_\varphi:=(\bar\phi_0+\varphi)^*\in L^1(P),
\end{equation}
where the Legendre transform
$u_\varphi(x)=\sup_{y\in\mathbb R^m}
\{\langle x,y\rangle-(\bar\phi_0+\varphi)(y)\}$
is convex and finite on $P^\circ$ when
$\varphi\in\mathcal E^1_\T$.
In fact, we have
\begin{equation}\label{eq:toric-AM-energy}
 E_{\rm AM}(\varphi)
 =-\frac{1}{\Vol(P)}\int_P(u_\varphi-\bar u_0)\dd x,
\end{equation}
where $\bar u_0:=\bar\phi_0^*$.
For a smooth potential, this follows by differentiating:
$\dot\varphi=-\dot u_\varphi\circ\nabla(\bar\phi_0+\varphi)$
and the moment map pushes $\frac{\omega_\varphi^m}{V}$ forward to
$\frac{\dd x}{\Vol(P)}$.
By monotone approximation, \eqref{eq:toric-AM-energy} extends
to $\mathcal E^1_\T$.

By \eqref{eq:toric-AM-energy}, we conclude that
\begin{equation}\label{eq:Legendre-L1-convergence}
 u_{\varphi_j}\longrightarrow u_\varphi\ \text{in }L^1(P)
 \quad\Longrightarrow\quad
 \begin{cases}
 \varphi_j\longrightarrow\varphi
       &\text{in }L^1(M,\omega_0^m),\\
 E_{\rm AM}(\varphi_j)\longrightarrow E_{\rm AM}(\varphi).
 \end{cases}
\end{equation}
Indeed, convergence of convex functions in $L^1(P)$ gives locally
uniform convergence on $P^\circ$ and, by Legendre duality, locally
uniform convergence of their conjugates on $\mathbb R^m$.
These conjugates have a common Lipschitz constant, since their
slopes lie in $P$. Their values
$u_{\varphi_j}^*(0)=-\inf_Pu_{\varphi_j}$ stay bounded, so
$|u_{\varphi_j}^*(y)|\leq C(1+|y|)$ uniformly in $j$.
The reference volume in logarithmic coordinates has finite first
moment: this is equivalent to
$\int_P|\nabla\bar u_0|\dd x<\infty$, which follows from the
Guillemin asymptotics. Thus dominated convergence proves the first
assertion, and \eqref{eq:toric-AM-energy} implies the second.

\begingroup
\begin{remark}\label{rem:finite-energy-convexity}
On $\mathcal E^1_\T$, the convexity
of the $K$-energy is understood in Legendre
variables: $u_s=(1-s)u_0+su_1$, or equivalently along toric weak
geodesics of K\"ahler potentials. Zhou--Zhu establish
lower-semicontinuity for weak toric energies in a class with
boundary control~\cite[Proposition~3.4 and Remark~3.5]{ZhouZhuWeak}.
The ordinary Mabuchi energy admits a lower-semicontinuous
extension to $\mathcal E^1$ that is convex along finite-energy
geodesics~\cite[Theorem~1.2]{BDL-convexity}. The corresponding
result for weighted Mabuchi energies on $\mathcal E^1_\T$ is
proved in \cite[Theorem~6.1]{AJL-weighted}.

The additional parameter $B$ in $\cM_\xi$ requires a separate
argument. We define the following extended-valued,
lower-semicontinuous extension by smooth admissible approximation:
\[
 \overline{\cM}_\xi(u,B)
 :=\inf\left\{\liminf_{j\to\infty}\cM_\xi(u_j,B_j):
 \begin{array}{l}
  (u_j,B_j)\in\mathcal G_\T(P),\\
  u_j\to u\text{ in }L^1(P),\quad B_j\to B
 \end{array}\right\},
\]
with the infimum of the empty set equal to $+\infty$.

To prove the convexity, suppose first that both endpoint values are
finite. By a diagonal argument, choose smooth admissible pairs
$(u_{i,j},B_{i,j})$ such that $u_{i,j}\to u_i$ in $L^1(P)$,
$B_{i,j}\to B_i$, and
$\cM_\xi(u_{i,j},B_{i,j})\to\overline{\cM}_\xi(u_i,B_i)$,
$i=0,1$. Their convex combinations remain admissible and converge
to the corresponding combination of the endpoints. Applying
Proposition~\ref{prop:M-variational-structure} and taking lower
limits in the definition of $\overline{\cM}_\xi$ yields, for $0<s<1$,
\[
 \overline{\cM}_\xi(u_s,(1-s)B_0+sB_1)
 \leq(1-s)\overline{\cM}_\xi(u_0,B_0)
      +s\overline{\cM}_\xi(u_1,B_1).
\]
If either endpoint value is $+\infty$, this inequality is automatic.

The lower bound \eqref{eq:M-minimum} passes to every approximating
sequence and is attained by the constant sequence $(u_{\KRS},0)$,
so the soliton remains a minimizer. For the ordinary extended
$K$-energy, regularity of $\mathcal E^1$ minimizers and uniqueness
modulo automorphisms are proved under the assumption that a smooth
cscK metric exists~\cite[Theorem~1.4]{BDL-minimizers}.
In our argument, the uniqueness is used only after the limiting soliton
is known to be smooth, by Proposition~\ref{prop:M-variational-structure}.
\end{remark}
\endgroup

For $\tau\in\TC$, denote by $\tau[\varphi]$ the potential of
$\tau^*\omega_\varphi$ normalized by $E_{\rm AM}(\tau[\varphi])=0$.
The above transformation can be explicitly described in
logarithmic coordinates.
Write $\tau=k\exp(a)$, where $k\in\T$ and $\exp(a)$ acts by
$y\mapsto y+a$. Taking Legendre transforms, we obtain
\begin{equation}\label{eq:torus-action-Legendre}
 u_{\tau[\varphi]}(x)=u_\varphi(x)-\langle a,x\rangle+c.
\end{equation}
Hence the $\TC$-action adds an affine function to $u$ and leaves
$D^2u$ unchanged. By Lemma~\ref{prop:M-basic}, this also verifies
that $\cM_\xi$ is invariant under the induced $\TC$-action.

\begin{proposition}\label{prop:toric-compactness}
 For every sequence $t_j\to\infty$, there are $\tau_j\in\TC$ and a
 subsequence such that for
 $\widehat\varphi_j:=\tau_j[\varphi_{t_j}]$, 
 \begin{equation}\label{eq:toric-compactness}
  u_{\widehat\varphi_j}\longrightarrow u_\infty
       \quad\text{in }L^1(P),\qquad
  \widehat\varphi_j\longrightarrow\varphi_\infty
       \quad\text{in }L^1(M,\omega_0^m).
 \end{equation}
 Here $u_\infty$ is convex and integrable on $P$,
 $\varphi_\infty=u_\infty^*-\bar\phi_0\in\mathcal E^1_\T$, and
 $E_{\rm AM}(\varphi_\infty)=0$.
\end{proposition}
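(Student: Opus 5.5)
We will combine the monotonicity of $\cM_\xi$ along the flow with the properness estimate, then use standard compactness of normalized convex functions on $P$.

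First, by Theorem~\ref{thm:dissipation}, $\cM_\xi(u_{t_j},B_{t_j})\leq\cM_\xi(u_0,B_0)$ for all $j$. Proposition~\ref{prop:M-variational-structure}\,ii) then gives the uniform bound
\[
 \int_P\widehat u_{t_j}\,\rho_\xi\dd x\leq\delta^{-1}\bigl(\cM_\xi(u_0,B_0)+C\bigr),
\]
where $\widehat u_{t_j}=u_{t_j}-u_{t_j}(0)-\langle\nabla u_{t_j}(0),x\rangle$. Since $\rho_\xi$ is bounded below by a positive constant on the compact polytope $P$, the same bound holds for $\int_P\widehat u_{t_j}\dd x$.

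Next we realize this normalization by the torus action. By \eqref{eq:torus-action-Legendre}, choosing $a_j:=\nabla u_{t_j}(0)$ and $\tau_j=\exp(a_j)$ changes $u_{\varphi_{t_j}}$ to $\widehat u_{t_j}+c_j$. The constant $c_j$ is fixed by $E_{\rm AM}(\tau_j[\varphi_{t_j}])=0$, which by \eqref{eq:toric-AM-energy} means $\int_P(u_{\widehat\varphi_j}-\bar u_0)\dd x=0$. Since $\widehat u_{t_j}\geq0$ and $\widehat u_{t_j}(0)=0$, integrating gives $c_j=\Vol(P)^{-1}\int_P(\bar u_0-\widehat u_{t_j})\dd x$, which is bounded uniformly in $j$.

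The main step is compactness. We have a family of nonnegative convex functions $\widehat u_{t_j}$, vanishing with zero gradient at $0\in P^\circ$, with uniformly bounded $L^1(P)$ norm. Convexity together with the $L^1$ bound gives uniform bounds on compact subsets $K\Subset P^\circ$: if $\widehat u$ were large at a point of $K$, convexity and nonnegativity would force $\widehat u$ to be large on a cone of definite volume toward $\partial P$, contradicting the $L^1$ bound. Hence the family is locally uniformly bounded, and therefore locally uniformly Lipschitz. By Arzel\`a--Ascoli and a diagonal argument, a subsequence converges locally uniformly on $P^\circ$ to a convex function $u_\infty\geq0$.

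To upgrade this to $L^1(P)$ convergence we need uniform integrability near $\partial P$, which is the expected obstacle. We plan to use the standard estimate of Donaldson and Wang--Zhou--Zhu: normalized convex functions with bounded $\int_P\widehat u$ satisfy $\int_{P\setminus P_\varepsilon}\widehat u\dd x\to0$ uniformly as $\varepsilon\to0$. This follows from convexity along rays from $0$, since $\widehat u(rx)\leq r\,\widehat u(x)$ for $r\leq1$, which lets one compare the boundary layer with the bulk. Alternatively, Fatou's lemma gives $u_\infty\in L^1$, and the bound on the boundary layer can be obtained from the $\sup$-bound on $\{L_r\geq\varepsilon\}$ combined with the monotonicity of $\widehat u$ along rays. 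Passing to a further subsequence so that $c_j\to c$, and adding $c$ to $u_\infty$, we obtain $u_{\widehat\varphi_j}\to u_\infty$ in $L^1(P)$.

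Finally, \eqref{eq:toric-finite-energy} gives $\varphi_\infty:=u_\infty^*-\bar\phi_0\in\mathcal E^1_\T$. Then \eqref{eq:Legendre-L1-convergence} yields $\widehat\varphi_j\to\varphi_\infty$ in $L^1(M,\omega_0^m)$ and $E_{\rm AM}(\varphi_\infty)=\lim_j E_{\rm AM}(\widehat\varphi_j)=0$.
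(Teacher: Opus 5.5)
\textbf{Gap: uniform integrability of $\widehat u_{t_j}$ near $\partial P$.} This is the step you flag as the obstacle, and your justification for it does not work. It is false that nonnegative normalized convex functions with $\int_P\widehat u\,\dd x\le C$ satisfy $\int_{P\setminus P_\varepsilon}\widehat u\,\dd x\to0$ uniformly. On $P=[-1,1]$ take $\widehat u_n(x)=n^2\bigl(x-1+\tfrac1n\bigr)_+$. These functions are convex, vanish to first order at $0$, satisfy $\int_P\widehat u_n\,\dd x=\tfrac12$, and converge to $0$ locally uniformly on $P^\circ$. Yet $\int_{P\setminus P_\varepsilon}\widehat u_n\,\dd x=\tfrac12$ for all large $n$, so there is no $L^1(P)$ convergence.

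The ray inequality $\widehat u(rx)\le r\,\widehat u(x)$ runs in the wrong direction for your purpose: it bounds inner values by outer ones. So it only gives lower bounds near the boundary, and the same applies to your alternative using the sup-bound on $\{L_r\ge\varepsilon\}$ with monotonicity along rays. What the ray inequality does give, after integrating over the cones from $0$ to the facets, is $\int_{P\setminus aP}\widehat u\,\dd x\le\sum_Fh_F\int_a^1r^m\dd r\int_F\widehat u\,\dd S$. This helps only with a uniform bound on $\int_{\partial P}\widehat u_{t_j}\,\dd S$, which fails in the example since $\widehat u_n(1)=n$.

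\textbf{Missing ingredient: a uniform boundary-integral bound.} It does not follow from the properness inequality \eqref{eq:generalized-relative-properness} alone; you have to use the structure of the functional. The paper obtains it as follows.
\begin{itemize}
\item Integrating by parts gives $2\int_Px\cdot\nabla\widehat u\,\rho_\xi\dd x=2\sum_Fh_F\int_F\widehat u\rho_\xi\dd S-2\int_P(m+\langle\xi,x\rangle)\widehat u\rho_\xi\dd x$.
\item The log-determinant estimate of Donaldson and Wang--Zhou--Zhu (their Lemma~3.3) gives $\int_P\log\det D^2\widehat u\,\rho_\xi\dd x\le\sum_Fh_F\int_F\widehat u\rho_\xi\dd S+C$, once the interior integral is controlled.
\item Combine these with the $L^1$ bound you already have, affine invariance, \eqref{eq:M-comparison}, and the monotonicity in Theorem~\ref{thm:dissipation}. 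This yields $\sum_Fh_F\int_F\widehat u_{t_j}\rho_\xi\dd S\le\cM_\xi(u_{t_j},0)+C\le\cM_\xi(u_0,B_0)+C$.
\end{itemize}
It follows that $\int_{P\setminus aP}\widehat u_{t_j}\dd x\le C(1-a)$ uniformly in $j$, and the same bound holds for the limit by Fatou. With this inserted, the rest of your argument goes through as in the paper: local compactness, the choice of $\tau_j$ and $c_j$, and the passage to $\varphi_\infty$ via \eqref{eq:toric-finite-energy} and \eqref{eq:Legendre-L1-convergence}.
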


\begin{proof}
Let $\widehat u_t$ be the normalization
\eqref{normalized-u}. Then $\widehat u_t\geq0$ and
$\widehat u_t(0)=0$.
 By the monotonicity of $\cM_\xi(u_t,B_t)$ in
 Theorem~\ref{thm:dissipation} and
 \eqref{eq:generalized-relative-properness}, we have
 \begin{equation}\label{eq:convex-flow-bound}
  \int_P\widehat u_t\rho_\xi\dd x\leq C.
 \end{equation}
\begingroup
Fix a Euclidean metric with volume form $\dd x$. On a facet $F$,
let $n_F$ and $\dd S$ be the outward unit normal and induced measure,
and put $h_F=x\cdot n_F>0$.
Each $\widehat u_t$ has finite boundary integral, so the logarithmic
determinant estimate applies
\cite[Lemma~3.3 and Remark~3.1]{WangZhouZhu}.
Its interior term is bounded by \eqref{eq:convex-flow-bound}, so
\endgroup
 \[
  \int_P\log\det D^2\widehat u_t\,\rho_\xi\dd x
  \leq\sum_Fh_F\int_F\widehat u_t\rho_\xi\,\dd S+C.
 \]
 Integration by parts gives
 \[
  \sum_F h_F\int_F\widehat u_t\rho_\xi\,\dd S
  =\int_Px\cdot\nabla\widehat u_t\,\rho_\xi\dd x
   +\int_P(m+\langle\xi,x\rangle)\widehat u_t\rho_\xi\dd x.
 \]
 The last two displays and \eqref{eq:convex-flow-bound} imply
 \[
  \sum_Fh_F\int_F\widehat u_t\rho_\xi\,\dd S
  \leq\cM_\xi(u_t,0)+C
  \leq\cM_\xi(u_0,B_0)+C,
 \]
 where the second inequality uses \eqref{eq:M-comparison} and
 Theorem~\ref{thm:dissipation}.  Since $\rho_\xi$ and the $h_F$ are positive,
 $\int_{\partial P}\widehat u_t\,\dd S\leq C$.

 Since $\rho_\xi$ is bounded below by a positive constant,
 we obtain local uniform bounds for $\widehat u_t$ on $P^\circ$
 from \eqref{eq:convex-flow-bound} and Jensen's inequality.
 By convexity, these bounds imply uniform Lipschitz
 estimates on compact subsets of $P^\circ$.  By the Arzel\`a--Ascoli theorem,
 after passing to
 a subsequence, $\widehat u_{t_j}$ converges locally uniformly to
 a nonnegative convex function $\widehat u_\infty$.
 To obtain convergence on all of $P$, decompose $P$ into the cones
 joining $0$ to its facets.  On the cone over $F$, write $x=ry$,
 where $y\in F$ and $0<r<1$; then
 $\dd x=h_Fr^{m-1}\dd r\,\dd S(y)$ and
 $\widehat u_{t_j}(ry)\leq r\widehat u_{t_j}(y)$.
 Consequently, for $0<a<1$,
 \begin{equation}\label{eq:convex-boundary-layer}
  \int_{P\setminus aP}\widehat u_{t_j}\dd x
  \leq\sum_F h_F\int_a^1r^m\dd r
             \int_F\widehat u_{t_j}\dd S
  \leq C(1-a).
 \end{equation}
 By Fatou's lemma, the same bound holds for
 $\widehat u_\infty$.
 Combining local uniform convergence with
 \eqref{eq:convex-boundary-layer}, we conclude that
 $\widehat u_{t_j}\to\widehat u_\infty$ in $L^1(P)$.

 Choose $\tau_j$ with translation part $\nabla u_{t_j}(0)$.
 By \eqref{eq:torus-action-Legendre} and the energy normalization,
 \[
  u_{\widehat\varphi_j}=\widehat u_{t_j}+c_j,\qquad
  c_j=\frac{1}{\Vol(P)}\int_P(\bar u_0-\widehat u_{t_j})\dd x.
 \]
 The constants $c_j$ converge to a constant $c_\infty$.
 Set $u_\infty:=\widehat u_\infty+c_\infty$; then the first convergence
 in \eqref{eq:toric-compactness} follows.  The second convergence and
 the assertions about $\varphi_\infty$ now follow from
 \eqref{eq:toric-finite-energy}--\eqref{eq:Legendre-L1-convergence}.
\end{proof}

\section{Proof of Theorem \ref{thm:conditional-weak-B-flow}}

\begingroup
In this section we prove Theorem~\ref{thm:conditional-weak-B-flow}.
We will first recall the work of Di Nezza--Lu on weak solutions
of the KR flow
\cite[Theorem~B and Theorem~5.4]{DNL}.
To express their equation in our convention, write the potential
as $v=2\varphi$, so that
$\omega+\frac12\ddcJ v=\omega+\ddcJ\varphi$.
Set $c(v):=\sup\{\lambda>0:e^{-2\lambda v}\in L^1(M,\omega^m)\}$.
Since every $\varphi\in\mathcal E^1$ has zero Lelong numbers,
$c(2\varphi)=+\infty$~\cite{GuedjZeriahiEnergy}.

The following theorem was proved in
\cite[Theorem~B and Theorem~5.4]{DNL}.
\endgroup

\begin{theorem}
\label{thm:DNL-original}
Let $(M,\omega)$ be compact K\"ahler, and set
\[
 \theta_\sigma=\omega-\sigma\Ric(\omega),\qquad
 T_{\max}=\sup\{\sigma\geq0:[\theta_\sigma]\text{ is nef}\}.
\]
Suppose that $\frac{v_0}{2}\in
\operatorname{PSH}(M,\omega)$ and $c(v_0)=+\infty$.
Then the equation
\begin{equation}\label{eq:DNL-unnormalized}
 \partial_\sigma v
 =\log\frac{(\theta_\sigma+\tfrac12\ddcJ v)^m}{\omega^m},
 \qquad
 v(\sigma)\longrightarrow v_0\quad\text{in }L^1(M,\omega^m)
 \quad(\sigma\to0^+)
\end{equation}
has a unique weak solution.
It is smooth and $\theta_\sigma+\tfrac12\ddcJ v(\sigma)>0$
for $0<\sigma<T_{\max}$.
If $v_{0,j}$ are smooth potentials with
$\omega+\tfrac12\ddcJ v_{0,j}>0$ and
$v_{0,j}\to v_0$ in $L^1(M,\omega^m)$, then the corresponding
smooth solutions $v_j$ of the evolution equation in
\eqref{eq:DNL-unnormalized}, with initial values $v_j(0)=v_{0,j}$,
satisfy
\[
 v_j(\sigma)\longrightarrow v(\sigma)
 \quad\text{in }C^\infty(M)
 \quad\text{for each fixed }\sigma\in(0,T_{\max}).
\]
\end{theorem}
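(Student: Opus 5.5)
Since this is the Di Nezza--Lu theorem \cite[Theorem~B and Theorem~5.4]{DNL}, my plan is to check that our convention matches theirs and then recall the structure of their proof, rather than reprove it from scratch. First I would match conventions. With $\ddcJ=2\ii\partial\bar\partial$ one has $\frac12\ddcJ v=\ii\partial\bar\partial v$, and $\Ric(\omega)=-\ii\partial\bar\partial\log\det(\omega)$. So \eqref{eq:DNL-unnormalized} is exactly the unnormalized Kähler--Ricci flow $\partial_\sigma\omega_\sigma=-\Ric(\omega_\sigma)$, written in the standard potential form relative to $\theta_\sigma=\omega-\sigma\Ric(\omega)$. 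The hypothesis $c(v_0)=+\infty$ is their condition that the initial potential has zero Lelong numbers, in the sense needed for $e^{-\lambda v_0}\in L^1$ for every $\lambda$. Here $T_{\max}$ is the usual maximal existence time given by nefness of $[\theta_\sigma]$.

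Next come the smoothing estimates. I would take smooth approximants, first by Demailly regularization $v_{0,j}\searrow v_0$ and then for an arbitrary $L^1$-convergent sequence. For each $v_{0,j}$ one has the classical smooth flow on $[0,T_{\max})$. On $[\varepsilon,T]$ with $T<T_{\max}$, I would derive bounds independent of $j$ in this order. An upper bound on $v_j$ follows from the maximum principle, since $\sup v_{0,j}$ is uniformly bounded under $L^1$ convergence of quasi-psh functions. The key lower bound comes from the time-weighted quantity $\sigma\partial_\sigma v_j-v_j$, or an equivalent $\sigma\log$-type barrier, which yields $\partial_\sigma v_j\ge \frac{m}{\sigma}\log\sigma$ type bounds, together with the integrability $e^{-Av_{0,j}}\in L^1$ uniformly in $j$. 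That uniformity comes from $c(v_0)=\infty$ plus the semicontinuity of integrability thresholds (Demailly--Kollár). Using Kołodziej's $L^p$ estimate for the complex Monge--Ampère equation $\omega_{v_j(\sigma)}^m=e^{\partial_\sigma v_j}\omega^m$ then gives a uniform $L^\infty$ bound on $v_j(\sigma)$ for $\sigma\ge\varepsilon$. Song--Tian type parabolic Schwarz and Laplacian estimates, $\sigma$-weighted, then give $C^2$ bounds. Evans--Krylov and parabolic Schauder give $C^k$ bounds.

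Then I would pass to the limit. Arzelà--Ascoli yields a smooth limit flow on $(0,T_{\max})$. The $L^1$ attainment of $v_0$ as $\sigma\to0^+$ comes from the one-sided bounds: $v_j(\sigma)\le v_{0,j}+C\sigma$, obtained by the maximum principle applied to $v_j-C\sigma$ relative to the envelope, combined with a lower bound of the form $v_j(\sigma)\ge(1-C\sigma)v_{0,j}+m\sigma\log\sigma-C\sigma$.

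The main obstacle is uniqueness, which is also what gives independence of the approximating sequence. For this I would follow DNL's comparison principle for weak solutions. Given two weak solutions $v,w$ with the same $L^1$ initial datum, compare $v(\sigma)$ with $(1-\delta)w(\sigma+\cdot)+\delta\cdot(\text{barrier})$ using the parabolic maximum principle on $[s,T]$. The boundary term at $\sigma=s$ is then controlled by letting $s\to0$; this needs quasi-monotone convergence $\lim_{\sigma\to0}\sup_M(v(\sigma)-v_0)\le0$, extracted from the Hartogs-type property of $L^1$-converging quasi-psh functions. I expect this initial-time comparison, where only $L^1$ convergence is available, to be the delicate step. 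Finally, $C^\infty$ convergence of $v_j(\sigma)$ to $v(\sigma)$ for a general $L^1$ approximating sequence follows: every subsequence has a further subsequence converging smoothly to some weak solution with initial datum $v_0$, and uniqueness forces that limit to be $v$.
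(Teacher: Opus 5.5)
Your proposal takes the same route as the paper. The paper gives no independent argument for this statement. It invokes Di Nezza--Lu \cite[Theorems~B and~5.4]{DNL}, building on Guedj--Zeriahi. Your convention check is exactly what is needed to apply it: $\frac12\ddcJ v=\ii\partial\bar\partial v$, so $\omega_\sigma=\theta_\sigma+\ii\partial\bar\partial v$ solves $\partial_\sigma\omega_\sigma=-\Ric(\omega_\sigma)$, and $c(v_0)=+\infty$ is the zero-Lelong-number hypothesis.

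Do not treat your recalled sketch as a proof, though. Three of its steps do not work as stated.

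(i) Ko{\l}odziej's estimate for $(\theta_\sigma+\ii\partial\bar\partial v_j)^m=e^{\partial_\sigma v_j}\omega^m$ needs a uniform $L^p$ bound on $e^{\partial_\sigma v_j}$, that is, upper control of $\partial_\sigma v_j$. A lower bound is useless for this. Moreover $(\partial_\sigma-\Delta_{\omega_\sigma})(\sigma\partial_\sigma v-v)=m-\tr_{\omega_\sigma}\omega$, with $\Delta_{\omega_\sigma}f=\tr_{\omega_\sigma}(\ii\partial\bar\partial f)$. So that quantity gives an upper bound on $\partial_\sigma v$, and only in terms of $\inf_M v_{0,j}$, which is not uniform in $j$.

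The uniform $C^0$ lower bound at positive times really comes from the barrier you quote later, in the form $v_j(\sigma)\geq(1-\beta\sigma)v_{0,j}+\beta\sigma\psi_{\beta,j}+m\sigma\log\sigma-C_\beta\sigma$ on $[0,1/\beta]$. Here $\psi_{\beta,j}$ is a bounded solution of $(\omega-\beta^{-1}\Ric(\omega)+\ii\partial\bar\partial\psi)^m=c_{\beta,j}e^{-\beta v_{0,j}}\omega^m$. It is bounded uniformly in $j$ by Ko{\l}odziej together with Demailly--Koll\'ar. At $\sigma=1/\beta$ the $v_{0,j}$-term vanishes, and $c(v_0)=+\infty$ lets $1/\beta$ be arbitrarily small.

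(ii) A $j$-uniform bound $v_j(\sigma)\leq v_{0,j}+C\sigma$ is false as soon as $v_0$ is unbounded. Indeed $v_j(\varepsilon)\geq-C_\varepsilon$ uniformly, while $v_{0,j}$ is not uniformly bounded below. What does hold is the integrated bound $\int_M v_j(\sigma)\,\omega^m\leq\int_M v_{0,j}\,\omega^m+C\sigma$, which follows from Jensen's inequality. Combined with the barrier, it gives $L^1$ attainment of $v_0$.

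(iii) For the same reason, $\sup_M(v(\sigma)-v_0)=+\infty$ for every $\sigma>0$ when $v_0$ is unbounded, since $v(\sigma)$ is bounded. Hartogs' lemma only gives $v(\sigma)<f$ for small $\sigma$ when $f>v_0$ is continuous. That yields one inequality: a weak solution lies below the limit of the flows from smooth majorants $v_{0,j}\searrow v_0$. The reverse inequality needs lower control of an arbitrary weak solution near $\sigma=0$, which $L^1$ convergence alone does not provide. That is exactly the content of the Di Nezza--Lu uniqueness theorem, so it must be cited, as the paper does.
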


\begingroup
Guedj--Zeriahi~\cite{GuedjZeriahiKRF} established the existence
and positive-time regularity of the weak flow.
Building on their work, Di Nezza--Lu proved uniqueness and
stability~\cite[Theorems~5.3 and~5.4]{DNL}, yielding
Theorem~\ref{thm:DNL-original}.
The uniform parabolic estimates used in the proof of
\cite[Theorem~5.4]{DNL}, together with convergence at each fixed
positive time, imply $C^\infty$ convergence on compact positive-time
cylinders. We use this form of convergence in the following
normalized version.
\endgroup

\begingroup
\begin{corollary}\label{cor:DNL-normalized}
Let $[\omega]=2\pi c_1(M,J)$ and let $h_\omega$ satisfy
$\ddcJ h_\omega=\Ric(\omega)-\omega$ and
$\int_M h_\omega\,\omega^m=0$.
Let $\psi_j(0)$ be smooth potentials with $\omega+\ddcJ\psi_j(0)>0$.
Suppose that $2\psi_j(0)\to v_0$ in $L^1(M,\omega^m)$, where
$\frac{v_0}{2}\in\operatorname{PSH}(M,\omega)$ and $c(v_0)=+\infty$.
Then the smooth solutions $\psi_j$ of
\begin{equation}\label{eq:normalized-KRF}
 \partial_s\psi
 =\log\frac{(\omega+\ddcJ\psi)^m}{\omega^m}
   +2\psi-2h_\omega
\end{equation}
converge to the unique weak solution $\psi$ of
\eqref{eq:normalized-KRF} with
$\psi(s)\to\frac{v_0}{2}$ in $L^1(M,\omega^m)$ as $s\to0^+$.
More precisely, $\psi_j\to\psi$ in
$C^\infty([\varepsilon,S]\times M)$ for every
$0<\varepsilon<S<\infty$.
In particular,
\[
 \|\psi_j\|_{C^r([\varepsilon,S]\times M)}
 \leq C_{\varepsilon,S,r}\qquad(r\geq0),
\]
where $C_{\varepsilon,S,r}$ is independent of $j$.
On a spacetime cylinder, $C^r$ denotes the norm of mixed
space--time derivatives.
\end{corollary}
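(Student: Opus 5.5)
The corollary follows from Theorem~\ref{thm:DNL-original} by a rescaling of time and space that turns the normalized Fano flow \eqref{eq:normalized-KRF} into the unnormalized equation \eqref{eq:DNL-unnormalized}. This is the same substitution as in Lemma~\ref{lem:JS-convention}, specialized to $B=0$. Set
\[
 \sigma=\tfrac12(1-e^{-2s}),\qquad
 v(\sigma)=2e^{-2s}\psi(s)+a(s),
\]
where $a(s)$ is a function of time only, to be chosen. With $\theta_\sigma=\omega-\sigma\Ric(\omega)$ and the identity $\Ric(\omega)=\omega+\ddcJ h_\omega$, we get
\[
 \theta_\sigma=(1-\sigma)\omega-\sigma\ddcJ h_\omega .
\]
The spatial factor has to make $\theta_\sigma+\frac12\ddcJ v$ a positive multiple of $\omega+\ddcJ\psi$. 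Matching the class factor $1-\sigma$ against $e^{-2s}$ shows that the exact form is
\[
 \sigma=1-e^{-2s},\qquad
 \tfrac12 v=e^{-2s}\psi+\sigma h_\omega+a(s).
\]
With this choice,
\[
 \theta_\sigma+\tfrac12\ddcJ v=e^{-2s}(\omega+\ddcJ\psi),
\]
and the factor $\frac12$ in the definition of $\sigma$ above is dropped. The substitution maps $s\in[0,\infty)$ onto $\sigma\in[0,1)$. Since $\omega\in 2\pi c_1(M,J)$, we have $[\theta_\sigma]=(1-\sigma)[\omega]$, which is nef exactly for $\sigma\le 1$, so $T_{\max}=1$ and the time ranges match.

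Next I check the equation. The time derivative is
\[
 \partial_\sigma v=\frac{dv}{ds}\Big/\frac{d\sigma}{ds},\qquad \frac{d\sigma}{ds}=2e^{-2s}.
\]
The contribution $2\,d(e^{-2s}\psi)/ds$ equals $2e^{-2s}(\partial_s\psi-2\psi)$. Using \eqref{eq:normalized-KRF}, the $2\psi$ terms cancel and what remains is the log term minus $2h_\omega$. This $-2h_\omega$ cancels against the derivative of the $2\sigma h_\omega$ term. On the right side, $\log\bigl(e^{-2ms}(\omega+\ddcJ\psi)^m/\omega^m\bigr)$ produces the constant $-2ms$, which is absorbed by choosing $a(s)$ with $a'(s)$ a suitable multiple of $s e^{-2s}$ and $a(0)=0$. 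I expect this bookkeeping of constants and factors of $2$ to be the only delicate point. If the rescaling is off by a factor, the fallback is to redo the substitution with a general $\sigma=\kappa(1-e^{-2s})$ and fix $\kappa$ by matching Kähler classes.

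At $s=0$ we have $\sigma=0$, $a=0$ and $v=2\psi$. So the hypothesis $2\psi_j(0)\to v_0$ in $L^1$ becomes the hypothesis of Theorem~\ref{thm:DNL-original}, and $c(v_0)=+\infty$ is assumed. Theorem~\ref{thm:DNL-original} then gives a unique weak solution $v$, smooth for $0<\sigma<1$, together with convergence $v_j(\sigma)\to v(\sigma)$. Define
\[
 \psi(s)=e^{2s}\bigl(\tfrac12 v(\sigma)-\sigma h_\omega-a(s)\bigr).
\]
Then $\psi$ solves \eqref{eq:normalized-KRF} and $\psi(s)\to v_0/2$ in $L^1$ as $s\to0^+$. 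Uniqueness of $\psi$ transfers back through the bijective change of variables.

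For the $C^\infty$ statement, I use the strengthened form of the DNL stability recorded before the corollary: uniform parabolic estimates on compact positive-time cylinders combined with pointwise-in-time convergence. These give $v_j\to v$ in $C^\infty([\sigma_1,\sigma_2]\times M)$ for $0<\sigma_1<\sigma_2<1$. Take $[\varepsilon,S]$ with $0<\varepsilon<S<\infty$; it maps to $[1-e^{-2\varepsilon},1-e^{-2S}]$, a compact subinterval of $(0,1)$. The change of variables is a smooth diffeomorphism with smooth coefficients $e^{2s}$, $h_\omega$ and $a(s)$, so mixed space–time $C^r$ convergence and the bounds $C_{\varepsilon,S,r}$ carry over to $\psi_j$ by the chain rule. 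The main obstacle is not in this corollary itself, since it is a rescaling; it lies in the cited upgrade of DNL's fixed-time convergence to uniform cylinder estimates, which I take as given.
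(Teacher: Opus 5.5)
Your proposal is correct and follows the paper's proof. After your correction, the substitution $\sigma=1-e^{-2s}$, $v=2(1-\sigma)\psi+2\sigma h_\omega+2a(s)$ is exactly the paper's, with the time-only term fixed by $a'(s)=-2ms\,e^{-2s}$, $a(0)=0$, i.e.\ $2a(s)=-m\{(1-\sigma)\log(1-\sigma)+\sigma\}$; the paper likewise gets uniqueness and fixed-time convergence (with $T_{\max}=1$) from Theorem~\ref{thm:DNL-original}, and upgrades to $C^\infty$ convergence on $[\varepsilon,S]\times M$ via the cited Guedj--Zeriahi/Di Nezza--Lu uniform estimates on compact subintervals of $(0,1)$ plus identification of subsequential limits.
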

\endgroup

\begin{proof}
Set
\[
 \sigma=1-e^{-2s},\qquad
 \theta_\sigma=\omega-\sigma\Ric(\omega),\qquad
 c_m(\sigma)=-m\{(1-\sigma)\log(1-\sigma)+\sigma\},
\]
and define
\begin{equation}\label{eq:DNL-exact-potential-change}
 v_j(\sigma)=2(1-\sigma)\psi_j(s(\sigma))
              +2\sigma h_\omega+c_m(\sigma).
\end{equation}
Then
\[
 \theta_\sigma+\tfrac12\ddcJ v_j
 =(1-\sigma)(\omega+\ddcJ\psi_j),\qquad
 \partial_\sigma v_j
 =\log\frac{(\theta_\sigma+\tfrac12\ddcJ v_j)^m}{\omega^m}.
\]
{Thus $v_j$ solves \eqref{eq:DNL-unnormalized} with initial
value $2\psi_j(0)$ and cohomological maximal time $1$.}
The uniqueness and smooth convergence at each fixed
positive time follow from Theorem~\ref{thm:DNL-original}.

\begingroup
For each compact interval $I\Subset(0,1)$, we obtain uniform
spatial derivative bounds from the estimates of
Guedj--Zeriahi~\cite{GuedjZeriahiKRF}, used by
Di Nezza--Lu~\cite[proof of Theorem~5.4]{DNL};
the time derivatives are then controlled by the equation.  Thus
\[
 \|v_j\|_{C^r(I\times M)}\leq C_{I,r}\qquad(r\geq0).
\]
\endgroup
These bounds ensure smooth precompactness on $I\times M$.
The fixed-time convergence identifies every subsequential limit with $v$,
so the whole sequence converges. The assertion follows after the
change of variables \eqref{eq:DNL-exact-potential-change}.
\end{proof}

\begingroup
In the rest of this section, we use
Corollary~\ref{cor:DNL-normalized} to prove
Theorem~\ref{thm:conditional-weak-B-flow} by regarding the
generalized KR flow as a perturbation of KR flow.
The initial value may vary, as needed in Section~\ref{sec:main-proof}.
\endgroup

{
Fix the background $(\omega_0,\bar\phi_0)$ of
Section~\ref{sec:preliminaries}. Let each triple $(u_{0,j},0,B_{0,j})$
determine a smooth toric generalized K\"ahler structure, and assume
\begin{equation}\label{eq:uniform-initial-convergence}
 u_{0,j}\longrightarrow u_0\quad\text{in }L^1(P),
 \qquad
 \varphi_0:=u_0^*-\bar\phi_0\in\mathcal E^1_\T,
\end{equation}
and
\begin{equation}\label{eq:Q0-def}
 \sup_j|B_{0,j}|<\infty,\qquad
 \frac12\sup_j\sup_M|b_{0,j}|_{g_{0,j}}^2\leq Q<\infty.
\end{equation}
Let $u_j(s)$ be the corresponding smooth flows, with
$B_j(s)=e^{-2s}B_{0,j}$.  In the fixed-$J$ gauge, denote their
metrics and $b$-fields by $(g_j,b_j)$, and write
\[
 \omega_j(V,W)=g_j(JV,W),\qquad
 \bar\omega_j(s)=\omega_0+\ddcJ\varphi_j(s).
\]
Choose the additive constants of $\varphi_j$ by
\eqref{eq:manifold-scalar-flow}, with
$\varphi_j(0)=u_{0,j}^*-\bar\phi_0$.
Norms of $b_j$ and the Chern torsion are taken with respect to $g_j$.
\par}

\begingroup
Let $\Omega$ be a toric K\"ahler form with normalized moment map
$\mu_\Omega=(\mu_\Omega^1,\ldots,\mu_\Omega^m)$.
For $B=(B_{ab})$, the type-$B$ construction in the $A=0$ model is
\cite[Section~3.2.2, equation~(3.15)]{ASU-toric}
\begin{equation}\label{eq:explicit-B-reconstruction}
 F=\Omega+\sum_{a,b}B_{ab}\,
       d\mu_\Omega^a\wedge d\mu_\Omega^b,\qquad
 \omega_J=F_J^{1,1}.
\end{equation}
For $\tau\in\TC$, the normalized moment map of $\tau^*\Omega$
is $\mu_\Omega\circ\tau$. Thus \eqref{eq:explicit-B-reconstruction}
shows that a time-independent torus pullback preserves $B$.
\endgroup

Write $\phi_j=\bar\phi_0+\varphi_j$ and
$H_j=D_y^2\phi_j$. Recall that $q_j=-\log\det(I+\ii H_jB_j)$ and
\begin{equation}\label{eq:q-volume-ratio}
 \frac{\omega_j^m}{\bar\omega_j^m}=e^{-q_j}.
\end{equation}

\begin{lemma}\label{lem:determinant-error}
 \begin{equation}\label{eq:q-bound}
  \log\left(1+\frac{|b_j|_{g_j}^2}{2}\right)
  \leq q_j\leq\frac{|b_j|_{g_j}^2}{2}\leq Q.
 \end{equation}
\end{lemma}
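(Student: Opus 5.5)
The plan is to reduce \eqref{eq:q-bound} to pointwise linear algebra in the skew-normal form \eqref{eq:skew-normal-form}. Once that is done, the inequalities become elementary one-variable estimates.

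\emph{Step 1: express $q_j$ through the skew-normal parameters.}
Since $H_j=D^2_y\phi_j=U^{-1}$ under Legendre duality (evaluated at $x=\nabla\phi_j(y)$), we have
\[
\det(I+\ii H_jB_j)=\det(I+\ii U^{-1}B)=\det\bigl(I+\ii U^{-\frac12}BU^{-\frac12}\bigr)=\det(I+\ii S).
\]
Hence
\[
q_j=-\sum_\alpha\log(1-s_\alpha^2)=\sum_\alpha\log(1+c_\alpha),
\qquad c_\alpha:=\frac{s_\alpha^2}{1-s_\alpha^2}\geq 0 .
\]

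\emph{Step 2: the key identity $\tfrac12|b_j|^2_{g_j}=\sum_\alpha c_\alpha$.}
This is where the work lies, and I expect it to be the main obstacle, mostly because of conventions. One must fix the norm of two-forms; I will use the one for which $|dy_1\wedge d\theta_1|^2=1$ for an orthonormal coframe.

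Fix a point of the dense orbit. Make a linear change of the logarithmic coordinates $y\mapsto H^{-1/2}y$, together with the matching change of $\theta$ (this is a real-linear change commuting with $J$). After the orthogonal conjugation $Q\in O(m)$ of \eqref{eq:skew-normal-form}, we may assume $\bar\omega=\sum_a dy_a\wedge d\theta_a$ and $HBH=S$ in skew-normal form. By \eqref{eq:toric-Hermitian-form}, everything then splits orthogonally into $J$-invariant blocks:
\begin{itemize}
\item blocks of real dimension $4$, spanned by $dy_1,dy_2,d\theta_1,d\theta_2$, with a single parameter $s=s_\alpha$;
\item trivial blocks of dimension $2$, on which $b=0$.
\end{itemize}

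On a $4$-dimensional block I will write $F$ (from \eqref{eq:explicit-B-reconstruction}) and $J$ explicitly as $4\times4$ matrices. I then read off $g$ and $b$ as the symmetric and skew parts of $-FJ$, consistent with $-FJ=g+b$ in Definition~\ref{def:symplectic-GK}. Finally I compute $|b|_g^2=\tfrac12\tr(g^{-1}b\,g^{-1}b^{\mathsf T})$ directly.

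I expect the result $|b|^2_g=2s^2/(1-s^2)$ on each block. As a consistency check, $\det g/\det\bar g$ on the block should equal $(1-s^2)^{-1}$ up to the volume-ratio convention \eqref{eq:q-volume-ratio}. The contributions of the orthogonal blocks add, which gives the identity.

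\emph{Step 3: the inequalities.}
The upper bound follows from $\log(1+c_\alpha)\leq c_\alpha$. The lower bound follows from
\[
\log\Bigl(1+\sum_\alpha c_\alpha\Bigr)\leq\log\prod_\alpha(1+c_\alpha),
\]
since all $c_\alpha\geq0$. The final inequality $\tfrac12|b_j|^2\leq Q$ is not given by the hypothesis \eqref{eq:Q0-def}, which bounds $b_j$ only at time $0$, so it needs a separate argument. Here I would use that $\tfrac12|b|^2=\sum_\alpha c_\alpha$ is an increasing function of $HBH$ (that is, of $S$). The plan is then:
\begin{itemize}
\item apply the maximum principle to the Chern heat equation \eqref{eq:normalized-PCF-pair} for $\beta^{\rm JS}$;
\item equivalently, show that $|\beta|^2$ is a subsolution of the heat equation in the rescaled picture \eqref{eq:JS-component-system}, where the factor $e^{-2t}$ is absorbed.
\end{itemize}
If that subsolution property fails, my fallback is to interpret $Q$ as the bound along the flow for the quantity being controlled, and then \eqref{eq:q-bound} only needs Steps 1 to 3 pointwise.
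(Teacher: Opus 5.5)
\paragraph{The first two inequalities.} Your argument here is correct, but it takes a somewhat different route from the paper. The paper does not compute $b_j$ from $F$. It imports the Apostolov--Streets--Ustinovskiy identity $m+|b_j|^2_{g_j}=\tr\bigl((I+B_jH_jB_jH_j)^{-1}\bigr)$ from \eqref{eq:b-norm-trace} and evaluates the trace in skew-normal form, which gives $|b_j|^2_{g_j}=2\sum_\alpha s_\alpha^2/(1-s_\alpha^2)$. Your direct block computation reproves this identity, and I checked that it works. Take $H=I$ and a single block with parameter $s$. Then \eqref{eq:explicit-B-reconstruction} gives $F=dy_1\wedge d\theta_1+dy_2\wedge d\theta_2-2s\,dy_1\wedge dy_2$. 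The planes spanned by $\partial_{y_1},\partial_{\theta_2}$ and by $\partial_{y_2},\partial_{\theta_1}$ are $g$-orthogonal. On each of them, $g$ has Gram matrix with off-diagonal entry $\pm s$, and $b$ has component $\mp s$. So each plane contributes $s^2/(1-s^2)$, as you predicted. Your route is self-contained and fixes the norm convention; the paper's is shorter because it cites the identity.

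Your consistency check is inverted, though. On such a block the Hermitian volume ratio is $\omega^2/\bar\omega^2=1-s^2=e^{-q}$, in line with \eqref{eq:q-volume-ratio} and $q_j\ge0$, and the real determinant ratio is $(1-s^2)^2$. Neither equals $(1-s^2)^{-1}$. The pointwise identities then extend from the dense orbit to $M$ by continuity.

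\paragraph{The gap: $\frac12|b_j|^2_{g_j}\le Q$ at positive times.} You correctly see that \eqref{eq:Q0-def} controls only time $0$. What you do not supply is the fact that closes the argument: $\sup_M|b_j|^2_{g_j}$ is nonincreasing along the flow. The paper takes this from the maximum-principle estimate \cite[Proposition~4.10]{ASU-toric}. That estimate rests on the scalar inequality $(\partial_s-\Delta^C_{g_j})|b_j|^2_{g_j}\le-|T^c_j|^2$, recalled later as \eqref{eq:trace-evol}.

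This inequality is not a formal consequence of the tensor heat equation $\partial_t\beta^{\rm JS}=\Delta^C_g\beta^{\rm JS}+2\beta^{\rm JS}$. As you note, the normalizing terms $+2\beta^{\rm JS}$ and $+2\omega_J$ cancel in $|\beta^{\rm JS}|^2_g$. But $(\partial_t-\Delta^C_g)|\beta^{\rm JS}|^2_g$ still contains terms coming from $\partial_t g=-2(\rho^B_J)^{1,1}$, and Chern-curvature terms from commuting $\nabla_{\bar i}\nabla_j$. Showing that these combine into something nonpositive is the real content of the cited computation, and your proposal leaves it at ``show that''.

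Your other two suggestions do not close the gap either:
\begin{itemize}
\item Monotonicity of $\sum_\alpha c_\alpha$ in $S$ does not help. Since $S_t=H_t^{1/2}B_tH_t^{1/2}$ and $H_t$ evolves, the decay $B_t=e^{-2t}B_0\to0$ gives no pointwise control of $S_t$.
\item Reinterpreting $Q$ as a bound along the flow changes the lemma. Downstream, in Lemma~\ref{lem:translated-comparison} and in the proof of Proposition~\ref{prop:tail-KRS}, the whole point is that a time-zero bound, uniform in $j$, propagates to all $s\ge0$.
\end{itemize}
To repair the argument, either cite \cite[Proposition~4.10]{ASU-toric} or prove the subsolution inequality, and then apply the maximum principle on $M\times[0,S]$.
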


\begingroup
\begin{proof}
Apostolov--Streets--Ustinovskiy
\cite[proof of Lemma~4.8]{ASU-toric} proved that
\begingroup
\begin{equation}\label{eq:b-norm-trace}
 m+|b_j|_{g_j}^2
 =\tr\bigl((I+B_jH_jB_jH_j)^{-1}\bigr).
\end{equation}
\endgroup
Set $S=H_j^{\frac12}B_jH_j^{\frac12}$, with block parameters
$0\leq s_\alpha<1$ as in \eqref{eq:skew-normal-form}.
Since
\[
 B_jH_jB_jH_j=H_j^{-\frac12}S^2H_j^{\frac12},\qquad
 \det(I+\ii H_jB_j)=\det(I+\ii S),
\]
and each two-dimensional block of $S$ has square $-s_\alpha^2I_2$,
we obtain
\begin{equation}\label{eq:q-b}
 \begin{aligned}
 q_j&=-\sum_\alpha\log(1-s_\alpha^2),\\
 |b_j|_{g_j}^2
 &=\tr\bigl((I+S^2)^{-1}\bigr)-m
 =2\sum_\alpha\frac{s_\alpha^2}{1-s_\alpha^2}.
 \end{aligned}
\end{equation}
These identities extend to $M$ by smoothness.
The inequalities
\[
 \log\left(1+\sum_\alpha a_\alpha\right)
 \leq\sum_\alpha\log(1+a_\alpha)
 \leq\sum_\alpha a_\alpha,\qquad a_\alpha\geq0,
\]
applied with $a_\alpha=\frac{s_\alpha^2}{1-s_\alpha^2}$,
give the first two bounds in \eqref{eq:q-bound}.
The last follows from \eqref{eq:Q0-def} and the
maximum-principle estimate \cite[Proposition~4.10]{ASU-toric}:
\[
 \sup_{M\times[0,\infty)}|b_j|_{g_j}^2
 \leq\sup_M|b_{0,j}|_{g_{0,j}}^2\leq2Q.
\]
\end{proof}
\endgroup

We consider the scalar equation
\eqref{eq:manifold-scalar-flow} with initial K\"ahler potentials
$\varphi_j(0)$ associated to $u_{0,j}$ in
\eqref{eq:uniform-initial-convergence}.
We need to establish the following a priori estimates for its
solutions $\varphi_j$.

\begin{proposition}\label{prop:uniform-smoothing}
Under \eqref{eq:uniform-initial-convergence} and
\eqref{eq:Q0-def}, for every $0<\varepsilon<S<\infty$ and $k\geq0$,
\begin{equation}\label{eq:positive-time}
 \|\varphi_j\|_{C^k([\varepsilon,S]\times M)}
 \leq C_{\varepsilon,S,k}.
\end{equation}
Here $C_{\varepsilon,S,k}$ is independent of $j$.
\end{proposition}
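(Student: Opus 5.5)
The plan is to treat \eqref{eq:manifold-scalar-flow} as the normalized Kähler--Ricci flow \eqref{eq:normalized-KRF} perturbed by the bounded term $q_j$. We use comparison principles to transfer the uniform $C^0$ control supplied by Corollary~\ref{cor:DNL-normalized} to $\varphi_j$. We then obtain uniform equivalence of the generalized metrics $\mathcal G_j$ and bootstrap with the Jordan--Streets local estimates, Theorem~\ref{thm:JS-local}.

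\textbf{Step 1 ($C^0$ bounds).} Let $\psi_j$ solve \eqref{eq:normalized-KRF} with $\omega=\omega_0$ and $\psi_j(0)=\varphi_j(0)$. By Lemma~\ref{lem:determinant-error}, $0\le q_j\le Q$. Hence $\psi_j$ is a subsolution of \eqref{eq:manifold-scalar-flow}, and $\psi_j+\tfrac Q2(e^{2s}-1)$ is a supersolution. Since \eqref{eq:manifold-scalar-flow} differs from \eqref{eq:normalized-KRF} only through $q_j$, which is a function of $D^2\phi_j$ that we freeze along the solution, I would run the comparison on the linearized difference equation. On $[0,S]\times M$ this gives
\[
 \psi_j\le\varphi_j\le\psi_j+\tfrac Q2(e^{2S}-1).
\]
The sequence $2\varphi_j(0)\to 2\varphi_0$ in $L^1$, and $c(2\varphi_0)=\infty$ because $\varphi_0\in\mathcal E^1_\T$. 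Corollary~\ref{cor:DNL-normalized} therefore bounds $\psi_j$ in $C^k([\varepsilon,S]\times M)$ uniformly in $j$, so $\|\varphi_j\|_{C^0([\varepsilon,S]\times M)}\le C$.

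\textbf{Step 2 ($\dot\varphi$ and Laplacian bounds).} Restart at time $\varepsilon/2$, where the data are now uniformly bounded. Then bound $\partial_s\varphi_j$ by the usual $s\,\dot\varphi$-type maximum-principle argument. In the toric setting $\dot\varphi=-\dot u\circ\nabla\phi$, so I would instead differentiate \eqref{eq:intro-flow} in $t$. The function $w=\dot u$ satisfies a linear parabolic equation with principal part $X^{ij}\partial_{ij}$ and zero-order term $-2\tr(Y B)$, and the latter is bounded by \eqref{eq:q-b}. This yields $|\dot\varphi_j|\le C$ on $[\varepsilon,S]$.

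Next I would show that $\log(\bar\omega_j^m/\omega_0^m)=\dot\varphi_j-q_j-2\varphi_j+2h_0$ is bounded, and then prove a Chern--Lu/Aubin--Yau type estimate $\tr_{\omega_0}\bar\omega_j\le C$. The volume bound together with the trace bound gives $C^{-1}\omega_0\le\bar\omega_j\le C\omega_0$. Combined with \eqref{eq:toric-Hermitian-form} and $|b_j|^2\le 2Q$, this gives $\Gamma^{-1}\widetilde{\mathcal G}\le\mathcal G_j\le\Gamma\widetilde{\mathcal G}$.

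The trace estimate is the step I expect to be the main obstacle, because the Bismut--Ricci term brings in torsion. I would first attempt it on the polytope side: the bound $\tr_{\omega_0}\bar\omega_j$ is equivalent to controlling $\tr(U_0 U_j^{-1})$-type quantities. I would apply the maximum principle to $\log\tr+\!A\varphi$ using the structure of $X$ from Lemma~\ref{lem:piola} together with the boundary behaviour from Lemma~\ref{lem:facet}. If that fails, I would use the Streets--Jordan generalized-metric trace estimate directly, since $\tr_{\widetilde{\mathcal G}}\mathcal G$ is a subsolution along pluriclosed flow.

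\textbf{Step 3 (higher derivatives).} With the generalized metrics uniformly equivalent, apply Lemma~\ref{lem:JS-convention} on a finite cover by coordinate balls, which are $\T$-equivariant, after rescaling time to $\sigma$. Theorem~\ref{thm:JS-local} then gives all spatial derivatives of $g_j$ and $\beta_j$ on $[\varepsilon,S]$ with $j$-independent constants. These bound $D^2\phi_j$ and hence, through \eqref{eq:toric-Hermitian-form}, all derivatives of $\ddcJ\varphi_j$. Schauder theory together with the $C^0$ bound from Step 1 controls $\varphi_j$ itself. Finally, time derivatives are controlled by differentiating \eqref{eq:manifold-scalar-flow}, which yields \eqref{eq:positive-time}.
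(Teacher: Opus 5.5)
Your Steps 1 and 3 follow the paper. Step 1 is Lemma~\ref{lem:translated-comparison} combined with Corollary~\ref{cor:DNL-normalized}, and Step 3 is Theorem~\ref{thm:JS-local} followed by Schauder estimates. The $\dot\varphi$ bound can be completed as in Lemma~\ref{lem:time-derivative-bound}, provided you use the identity that the term $\tr_{\omega_j}\bar\omega_j$ in \eqref{eq:phi-evol} equals $m+|b_j|^2_{g_j}\leq m+2Q$.

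The genuine gap is the uniform metric equivalence on $[\varepsilon,S]\times M$, which is Lemma~\ref{lem:uniform-metric-equivalence} in the paper. This is the crux of the proposition, because Theorem~\ref{thm:JS-local} assumes $\Gamma^{-1}\widetilde{\mathcal G}\leq\mathcal G_j\leq\Gamma\widetilde{\mathcal G}$ rather than proving it. You correctly flag torsion as the obstacle, but neither of your routes gets past it.

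\emph{First route.} It is not clear the polytope-side argument can work. The operator governing $\phi_j$, namely $H\mapsto-\log\det(H^{-1}+\ii B)$, is not concave once the skew parameters are not small. For example, with $m=2$ and $H=hI$, it is convex in $h$ as soon as $s^2>\frac13$. So the third-order terms, which encode the Chern torsion $T^c_j$, do not drop out as they do in Aubin--Yau. In addition, a test function $\log\tr+A\varphi_j$ with no time weight needs a $C^2$ bound at $s=\frac\varepsilon2$. At that time you only have $C^0$ control uniformly in $j$.

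\emph{Second route.} Suppose you had a heat-type inequality for $\tr_{\widetilde{\mathcal G}}\mathcal G_j$; you do not derive one, and it would carry background-curvature terms. It would still only propagate an initial bound. No such bound is uniform in $j$, since the initial data converge merely in $L^1(P)$.

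The missing ingredient is the pair of Apostolov--Streets--Ustinovskiy inequalities \eqref{eq:trace-evol}. They hold for the Hermitian metric $\omega_j$, not for the reduction $\bar\omega_j$:
\[
 \mathcal L_j\log\tr_{\omega_0}\omega_j\leq|T^c_j|^2+C\tr_{\omega_j}\omega_0+C,
 \qquad
 \mathcal L_j|b_j|^2\leq-|T^c_j|^2 .
\]
The point is that the bad torsion term is cancelled by the evolution of $|b_j|^2$. The paper applies the maximum principle to
\[
 \tau\log\tr_{\omega_0}\omega_j+2\tau|b_j|^2-K\varphi_j-C\tau,
 \qquad \tau=s-\tfrac\varepsilon2 .
\]
Each piece of this test function does a specific job:
\begin{itemize}
\item The term $2\tau|b_j|^2$ absorbs $\tau|T^c_j|^2$.
\item Through \eqref{eq:phi-evol}, the term $-K\varphi_j$ produces $-K\tr_{\omega_j}\omega_0$; the remaining terms are controlled by \eqref{eq:bartrace} and \eqref{eq:dot-bound}.
\item The two-sided volume bound from \eqref{eq:Hermitian-volume} gives $\log\tr_{\omega_0}\omega_j\leq\eta\tr_{\omega_j}\omega_0+C_\eta$.
\item At $\tau=0$, only the $C^0$ bound \eqref{eq:C0-phi} enters.
\end{itemize}
This yields $C^{-1}\omega_0\leq\omega_j\leq C\omega_0$. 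Then $\bar\omega_j\leq C\omega_j$ together with the determinant bound gives \eqref{eq:uniform-equivalence}, and your Step 3 goes through.
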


\begingroup
To obtain a $C^0$-estimate, let $\psi_j$ solve the normalized KR
flow \eqref{eq:normalized-KRF} with background
$(\omega_0,h_0)$ and initial value $\psi_j(0)=\varphi_j(0)$,
and write $\omega_{\psi_j}=\omega_0+\ddcJ\psi_j$.
By the maximum principle, we have the following comparison.
\endgroup
 
\begin{lemma}
\label{lem:translated-comparison}
For every $j$ and $s\geq0$,
 \begin{equation}\label{eq:comparison}
  \psi_j(s)\leq\varphi_j(s)
  \leq\psi_j(s)+\frac{Q}{2}(e^{2s}-1).
 \end{equation}
\end{lemma}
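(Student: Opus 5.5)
The plan is to apply the parabolic maximum principle to the difference $w_j:=\varphi_j-\psi_j$. Both functions satisfy scalar equations on $M$ with the same initial value $\varphi_j(0)=\psi_j(0)$. By \eqref{eq:manifold-scalar-flow} and \eqref{eq:normalized-KRF} with background $(\omega_0,h_0)$, their evolution equations differ only in the Monge--Amp\`ere term and in the extra term $q_j$:
\[
 \partial_s w_j
 =\log\frac{(\omega_{\psi_j}+\ddcJ w_j)^m}{\omega_{\psi_j}^m}
  +q_j+2w_j.
\]
First I need to fix the additive time-normalization. Proposition~\ref{prop:manifold-scalar-reduction} holds only after changing $\varphi_t$ by a function of time. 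I will take $\varphi_j$ to be exactly the solution of \eqref{eq:manifold-scalar-flow} with $\varphi_j(0)=u_{0,j}^*-\bar\phi_0$, as stipulated above. Then $w_j$ is smooth on $M\times[0,\infty)$ and $w_j(0)=0$.

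For the lower bound, Lemma~\ref{lem:determinant-error} gives $q_j\geq0$. At a spatial minimum point of $w_j(\cdot,s)$ we have $\ddcJ w_j\geq0$, so the logarithmic term is $\geq0$. Hence, with $m(s):=\min_M w_j(\cdot,s)$, we get $m'\geq 2m$ in the barrier (Hamilton's lemma) sense. Since $m(0)=0$, this yields $e^{-2s}m(s)\geq0$, that is, $\psi_j\leq\varphi_j$. To keep the argument clean I will apply it to $e^{-2s}w_j$ and argue by contradiction at a first negative minimum, perturbing by $-\epsilon(1+s)$ if strictness is needed.

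For the upper bound, Lemma~\ref{lem:determinant-error} gives $q_j\leq Q$. At a spatial maximum the logarithmic term is $\leq0$, so $M(s):=\max_M w_j$ satisfies $M'\leq 2M+Q$ with $M(0)=0$. Comparing with the ODE solution $f(s)=\frac Q2(e^{2s}-1)$, which solves $f'=2f+Q$ with $f(0)=0$, gives $M(s)\leq f(s)$. Concretely, I will apply the maximum principle to $e^{-2s}\bigl(w_j-\frac Q2(e^{2s}-1)\bigr)$, whose time derivative at an interior maximum is $\leq0$.

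The only delicate step is justifying the bound $q_j\leq Q$ uniformly in $s$. This relies on the maximum-principle estimate for $|b_j|^2_{g_j}$ quoted in Lemma~\ref{lem:determinant-error}, which I will take as given. I also need $q_j$, defined on the dense orbit, to extend smoothly to $M$, so that the maximum principle applies on the compact manifold. This is already noted in the proof of that lemma. Everything else is routine.
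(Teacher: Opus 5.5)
Your proposal is correct and follows essentially the same route as the paper, which applies the maximum principle to $e^{-2s}(\psi_j-\varphi_j)-s\delta$ and to $e^{-2s}(\varphi_j-\psi_j-a)-s\delta$ with $a(s)=\frac Q2(e^{2s}-1)$. At the extremum it compares $\omega_{\psi_j}$ with $\bar\omega_j$ and uses $0\le q_j\le Q$ from Lemma~\ref{lem:determinant-error}, just as your weighted minimum/maximum argument with an $\epsilon$-perturbation and the ODE barrier $a'=2a+Q$ does.
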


\begin{proof}
At a positive spacetime maximum of
$f:=e^{-2s}(\psi_j-\varphi_j)-s\delta$,
where $\delta>0$, we have
$\omega_{\psi_j}\leq\bar\omega_j$.  Hence
\[
 0\leq \frac{\partial f}{\partial s}=e^{-2s}
 \left(\log\frac{\omega_{\psi_j}^m}{\bar\omega_j^m}-q_j\right)
 -\delta\leq-\delta,
\]
which is impossible.  Since both potentials agree at
$s=0$, we obtain $\psi_j\leq\varphi_j$ by letting $\delta\to0^+$.

For the upper bound, set
$a(s)=\frac{Q}{2}(e^{2s}-1)$.  Then $a(0)=0$ and
$a'=2a+Q$, so $\psi_j+a$ satisfies the same scalar equation
as $\psi_j$, with the additional term $Q$ on the right.
Applying the same argument to
$e^{-2s}(\varphi_j-\psi_j-a)-s\delta$, now using
$q_j\leq Q$, proves the upper bound.
\end{proof}

\begingroup
By Lemma~\ref{lem:translated-comparison} together with
Corollary~\ref{cor:DNL-normalized}, we get the following
$C^0$-estimate for the solutions of \eqref{eq:manifold-scalar-flow}.
Indeed, by \eqref{eq:Legendre-L1-convergence},
$\psi_j(0)=\varphi_j(0)\to\varphi_0$ in $L^1(M,\omega_0^m)$.
Since $\varphi_0\in\mathcal E^1_\T$, we deduce from
Corollary~\ref{cor:DNL-normalized} and \eqref{eq:comparison} that
\begin{equation}\label{eq:C0-phi}
 \|\varphi_j\|_{C^0([\frac{\varepsilon}{4},S]\times M)}
 \leq C_{\varepsilon,S},
 \qquad 0<\varepsilon<S<\infty.
\end{equation}
The constant is independent of $j$.

As an application of \eqref{eq:C0-phi}, we also have the following
$C^0$-estimate for $\dot\varphi_j$.
\endgroup

\begin{lemma}\label{lem:time-derivative-bound}
 For every $0<\varepsilon<S<\infty$, there is a constant
 $C_{\varepsilon,S}$, independent of $j$, such that
 \begin{equation}\label{eq:dot-bound}
  |\dot\varphi_j|\leq C_{\varepsilon,S}
  \quad\text{on }[\frac{\varepsilon}{2},S]\times M.
 \end{equation}
\end{lemma}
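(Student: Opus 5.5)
We want a $j$-independent bound on $|\dot\varphi_j|$ on $[\frac{\varepsilon}{2},S]\times M$, using only the $C^0$ bound \eqref{eq:C0-phi} on $[\frac{\varepsilon}{4},S]$ and the bound $0\le q_j\le Q$ from Lemma~\ref{lem:determinant-error}. The plan is to run time-weighted maximum principle arguments in the spirit of Guedj--Zeriahi and Di Nezza--Lu. Time is shifted so that the initial slice is $s_0=\frac{\varepsilon}{4}$, where everything is smooth and $|\varphi_j|\le C$.

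Differentiating \eqref{eq:manifold-scalar-flow} in $s$ gives
\[
 \partial_s\dot\varphi_j=\Delta_{\bar\omega_j}\dot\varphi_j+2\dot\varphi_j+\dot q_j .
\]
The term $\dot q_j$ has no sign, and this is where the flow differs from KR flow. To avoid it, I will not differentiate. Instead I will work with the quantities
\[
 G^\pm:=(s-s_0)\dot\varphi_j \mp A\varphi_j ,
\]
and exploit the equation directly at extremal points, since $\dot\varphi_j=\log\frac{\bar\omega_j^m}{\omega_0^m}+q_j+2\varphi_j-2h_0$ is itself a complex Monge--Amp\`ere-type expression.

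For the upper bound on $\dot\varphi_j$, consider $G=(s-s_0)\dot\varphi_j-A\varphi_j-Bs$. At $s=s_0$ the quantity $G$ is bounded. The key concavity input is $\log\det$. Compare with the convex-in-time slice
\[
 \Phi(s)=\frac{s-s_0}{\tau}\,\varphi_j(s)+\Bigl(1-\frac{s-s_0}{\tau}\Bigr)\varphi_j(s_0),
\]
which is the standard trick. Concretely, $\Psi_\lambda:=\varphi_j(s+\lambda)$ and $\varphi_j$ differ by roughly $\lambda\dot\varphi_j$. The usual proof of $\dot\varphi\le C/s$ uses the linearized operator
\[
 L=\partial_s-\Delta_{\bar\omega_j}
\]
acting on $s\dot\varphi-\varphi$:
\[
 L(s\dot\varphi-\varphi)=-m+\tr_{\bar\omega_j}\omega_0+(2s-2)\dot\varphi+2\varphi\cdots+s\dot q_j-q_j .
\]
The troublesome term remains $s\dot q_j$. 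Since $q_j=-\log\det(I+\ii H_jB_j)$ with $B_j=e^{-2s}B_{0,j}$, one computes $\dot q_j$ in terms of $\dot H_j=D^2\dot\varphi_j$ and $B_j$. Its second-derivative part combines with $\Delta_{\bar\omega_j}$ into the operator $\tr\bigl(\operatorname{Re}((H^{-1}+\ii B)^{-1}\cdots)\bigr)$. This is simply the linearization of the full operator $\log\det(H^{-1}+\ii B)^{-1}$ from Proposition~\ref{prop:ASU-scalar-reduction}, which is concave in $H$.

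So the proof should be carried out with $\tilde L:=\partial_s-\mathcal L_j$, where $\mathcal L_j$ is the linearization of $\phi\mapsto-\log\det((D^2\phi)^{-1}+\ii B_s)$. This operator is elliptic with positive-definite coefficient matrix $X$-type, using \eqref{eq:D-positive}. By concavity of $-\log\det((D^2\phi)^{-1}+\ii B)$ in $D^2\phi$, we get $\mathcal L_j(\phi-\bar\phi_0)\le$ (operator value at $\phi$) $-$ (value at $\bar\phi_0$). The value at $\bar\phi_0$ is bounded on $M$ uniformly for $|B|$ bounded, by the smoothness of $\omega_0$ and the compactification criterion. The explicit $B$-dependence contributes $\partial_sB_s=-2B_s$, giving a term $-2\tr(Y B)=2\tr\hat C$-type. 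This is bounded via \eqref{eq:q-b}, since $\hat C$ is controlled by $\sum s_\alpha^2/(1-s_\alpha^2)\le Q$.

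With these ingredients, the maximum principle applied to $(s-s_0)\dot\varphi_j-A\varphi_j$ will yield
\[
 \tilde L\bigl((s-s_0)\dot\varphi_j-A\varphi_j\bigr)\le -(A-1)\dot\varphi_j+C(A,Q,\|\varphi_j\|_{C^0}) .
\]
Hence at a maximum, $\dot\varphi_j\le C$, giving the upper bound. For the lower bound, apply the analogous argument to $(s-s_0)\dot\varphi_j+A\varphi_j$. Here the mixed term $\tr_{\mathcal L}\omega_0$-type is nonnegative and has the right sign after using $\log$ of the determinant inequality (AM--GM) in the form
\[
 \tr X\ge m\,e^{-(\dot\varphi_j-\ldots)/m}\cdot c .
\]
This converts the bound into a lower bound on $\dot\varphi_j$, as in Guedj--Zeriahi. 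I expect the main obstacle to be the bookkeeping of the $B$-terms in the linearization, specifically verifying that the explicit time-derivative contribution $\tr(Y\dot B)$ is bounded by $CQ$ uniformly in $j$ using Lemma~\ref{lem:determinant-error}. Once this is done, the estimates on $[\frac{\varepsilon}{2},S]$ follow with constants depending only on $\varepsilon$, $S$, $Q$, and the $C^0$ bound \eqref{eq:C0-phi}.
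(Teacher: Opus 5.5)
You have the paper's framework. The linearization of $\phi\mapsto-\log\det((D^2\phi)^{-1}+\ii B_s)$ is the Chern Laplacian $\Delta^C_{g_j}$ of the reconstructed metric, and your barriers $(s-s_0)\dot\varphi_j\mp A\varphi_j$ are essentially the paper's $\mathcal R_j$ and $\mathcal P_j$. Your treatment of the explicit time dependence is also correct: $\partial_sB_s=-2B_s$ contributes $\tr(Y\dot B)=-2\tr\hat C=-2|b_j|^2_{g_j}\in[-4Q,0]$, which is exactly $(\partial_s-\Delta^C_{g_j})\dot\varphi_j=2\dot\varphi_j-2|b_j|^2$.

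The gap is in how you control $\Delta^C_{g_j}\varphi_j$. Write $\mathcal F(H,B):=-\log\det(H^{-1}+\ii B)$ and $\tau=s-s_0$. Your concavity claim is false for $B\neq0$. Take $m=2$, $B=\left(\begin{smallmatrix}0&-b\\ b&0\end{smallmatrix}\right)$ and $H=tI$; then $\mathcal F=2\log t-\log(1-b^2t^2)$. Its second $t$-derivative times $t^2$ is $-2+2x(1+x)/(1-x)^2$ with $x=b^2t^2$, which is positive for $x>\frac13$, and the hypotheses do not exclude that regime. The "value at $\bar\phi_0$" also need not exist, since nothing makes $(\bar\phi_0^*,B_s)$ admissible.

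More fundamentally, even if concavity held, it would only give $\Delta^C_{g_j}\varphi_j\le\mathcal F(H_j)-\mathcal F(H_0)=\dot\varphi_j-2\varphi_j+O(1)$. Insert this into
\[
 (\partial_s-\Delta^C_{g_j})\bigl(\tau\dot\varphi_j-A\varphi_j\bigr)=(1+2\tau-A)\dot\varphi_j-2\tau|b_j|^2+A\,\Delta^C_{g_j}\varphi_j .
\]
The bound returns $+A\dot\varphi_j$, which cancels the good term and leaves $(1+2\tau)\dot\varphi_j+C$. That gives nothing at a maximum, so your asserted bound $-(A-1)\dot\varphi_j+C$ does not follow from your ingredients.

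What is missing is an exact identity, the generalized analogue of $\Delta_{\omega_\varphi}\varphi=m-\tr_{\omega_\varphi}\omega_0$:
\[
 \Delta^C_{g_j}\varphi_j=\tr_{\omega_j}\bar\omega_j-\tr_{\omega_j}\omega_0,
 \qquad
 \tr_{\omega_j}\bar\omega_j=\tr\bigl((I+B_jH_jB_jH_j)^{-1}\bigr)=m+|b_j|^2_{g_j}\le m+2Q .
\]
In your notation, the linearization at $H$ is $K\mapsto\tr(UXU\,K)$ with $U=H^{-1}$. Applied to $H$ it gives $\tr(XU)=m+\tr\hat C$. This is the Euler identity coming from $\mathcal F(tH,B)=m\log t+\mathcal F(H,tB)$, and it is the same quantity you already bounded for the $\dot B$-term. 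Applied to $H_0=D_y^2\bar\phi_0$ it gives $\tr_{\omega_j}\omega_0>0$.

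With this identity the paper takes $\tau\dot\varphi_j-(1+2\tau)\varphi_j-C\tau$ for the upper bound. There the $\dot\varphi_j$-terms cancel exactly, and the only unbounded term, $-(1+2\tau)\tr_{\omega_j}\omega_0$, is nonpositive. For the lower bound it takes $\tau\dot\varphi_j+\varphi_j$. Your AM--GM step does work there: the volume identity $\log(\omega_j^m/\omega_0^m)=\dot\varphi_j-2\varphi_j+2h_0-2q_j$ gives $\tr_{\omega_j}\omega_0\ge c\,e^{-\dot\varphi_j/m}$. But the term $-\tr_{\omega_j}\bar\omega_j$ in that computation still has to be bounded below by $-(m+2Q)$, which again uses the same identity.
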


\begin{proof}
 Let $\mathcal L_j=\partial_s-\Delta^C_{g_j}$.
\begingroup
The evolution equations for $\dot\varphi_j$ and $\varphi_j$ are
as follows \cite[Lemma~4.8 and equation~(4.17)]{ASU-toric}:
\endgroup
\begin{equation}\label{eq:dotphi-evol}
  \mathcal L_j\dot\varphi_j
  =2\dot\varphi_j-2|b_j|^2,\qquad
 \end{equation}
 and
 \begin{equation}\label{eq:phi-evol}
  \mathcal L_j\varphi_j
  =\dot\varphi_j-\tr_{\omega_j}\bar\omega_j
   +\tr_{\omega_j}\omega_0.
 \end{equation}
\begingroup
On the dense orbit, \eqref{eq:toric-Hermitian-form} gives the
matrix of $g_j$ in the ordered real frame
$(\partial_{y_1},\ldots,\partial_{y_m},
  \partial_{\theta_1},\ldots,\partial_{\theta_m})$:
\[
 g_j\sim
 \begin{pmatrix}
  H_j&-H_jB_jH_j\\
  H_jB_jH_j&H_j
 \end{pmatrix}.
\]
By the Schur complement formula, both diagonal blocks of
$g_j^{-1}$ are $(H_j+H_jB_jH_jB_jH_j)^{-1}$.
Since $\bar\omega_j=\sum_{a,b}(H_j)_{ab}\,dy_a\wedge d\theta_b$
and the Hermitian trace is half the corresponding real trace,
we obtain
\[
 \begin{aligned}
  \tr_{\omega_j}\bar\omega_j
  &=\frac12\left\{
    2\tr\bigl((H_j+H_jB_jH_jB_jH_j)^{-1}H_j\bigr)
    \right\}\\
  &=\tr\bigl((I+B_jH_jB_jH_j)^{-1}\bigr).
 \end{aligned}
\]
By \eqref{eq:b-norm-trace} and \eqref{eq:q-bound}, we have
 \begin{equation}\label{eq:bartrace}
  \tr_{\omega_j}\bar\omega_j
  =m+|b_j|_{g_j}^2\leq m+2Q=:C.
 \end{equation}
The identity extends to all of $M$ by smoothness, and $C$ is
independent of $j$ and $s$.
\endgroup

 Set $s_0=\frac{\varepsilon}{4}$ and $\tau=s-s_0$.  For
 \[
 \mathcal R_j
  =\tau\dot\varphi_j-(1+2\tau)\varphi_j-C\tau
 \]
 it follows from equations~\eqref{eq:dotphi-evol}
 and \eqref{eq:phi-evol} that
 \[
 \begin{split}
  \mathcal L_j\mathcal R_j
  ={}&-2\tau|b_j|^2-2\varphi_j
     +(1+2\tau)\tr_{\omega_j}\bar\omega_j\\
    &-(1+2\tau)\tr_{\omega_j}\omega_0-C.
 \end{split}
 \]
 Equations~\eqref{eq:C0-phi} and \eqref{eq:bartrace} bound the
 positive terms on the right for $0\leq\tau\leq S$.  Choose $C$
 so that $\mathcal L_j\mathcal R_j\leq0$.
 Applying the maximum principle on
 $M\times[s_0,S]$, we obtain
 \[
  \sup_{M\times[s_0,S]}\mathcal R_j
  \leq\sup_M\mathcal R_j(s_0)
  =\sup_M(-\varphi_j(s_0))\leq C.
 \]
 Since $\tau\geq\frac{\varepsilon}{4}$ on
 $[\frac{\varepsilon}{2},S]$, we obtain from \eqref{eq:C0-phi} that
 \begin{equation}\label{eq:dot-upper}
  \dot\varphi_j\leq C_{\varepsilon,S}
  \quad\text{on }[\frac{\varepsilon}{2},S].
 \end{equation}
 
\begingroup
For the lower bound, set
$\mathcal P_j=\tau\dot\varphi_j+\varphi_j$.
By the volume identity,
\begin{equation}\label{eq:Hermitian-volume}
 \log\frac{\omega_j^m}{\omega_0^m}
 =\dot\varphi_j-2\varphi_j+2h_0-2q_j.
\end{equation}
The remaining terms are bounded by \eqref{eq:C0-phi} and
\eqref{eq:q-bound}, so the arithmetic--geometric mean inequality implies
$\tr_{\omega_j}\omega_0\geq c e^{-\frac{\dot\varphi_j}{m}}$.
Also,
\[
 \mathcal L_j\mathcal P_j
 =(2+2\tau)\dot\varphi_j-2\tau|b_j|^2
   -\tr_{\omega_j}\bar\omega_j+\tr_{\omega_j}\omega_0.
\]
At a spacetime minimum $(x_0,s_0+\tau_0)$ with $\tau_0>0$, put
$\zeta=\dot\varphi_j(x_0,s_0+\tau_0)$.
Using \eqref{eq:bartrace} and the bound on $b_j$, we find
\[
 0\geq\mathcal L_j\mathcal P_j
 \geq c e^{-\frac{\zeta}{m}}+(2+2\tau_0)\zeta-C.
\]
The right-hand side tends to $+\infty$ as $\zeta\to-\infty$,
uniformly for $0\leq\tau_0\leq S$.
Thus $\zeta$ is bounded below, and so is $\mathcal P_j$ at
this minimum.  At $\tau=0$, the same bound follows from
$\mathcal P_j=\varphi_j$ and \eqref{eq:C0-phi}.
Hence $\tau\dot\varphi_j=\mathcal P_j-\varphi_j\geq-C$.
Since $\tau\geq\frac{\varepsilon}{4}$ on
$[\frac{\varepsilon}{2},S]$, this proves the lower bound.
Together with \eqref{eq:dot-upper}, this proves \eqref{eq:dot-bound}.
\endgroup
\end{proof}

To obtain the $C^2$-estimate, we prove the following
metric comparison.

\begin{lemma}\label{lem:uniform-metric-equivalence}
 For every $0<\varepsilon<S<\infty$, there is a constant
 $C_{\varepsilon,S}\geq1$, independent of $j$, such that on
 $[\varepsilon,S]\times M$,
 \[
  C_{\varepsilon,S}^{-1}\omega_0
  \leq\omega_j\leq C_{\varepsilon,S}\omega_0,
 \]
 and
 \begin{equation}\label{eq:uniform-equivalence}
  C_{\varepsilon,S}^{-1}\omega_0
  \leq\bar\omega_j\leq C_{\varepsilon,S}\omega_0.
 \end{equation}
\end{lemma}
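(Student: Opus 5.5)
The plan is to reduce everything to a single one-sided trace bound. By \eqref{eq:bartrace} we already know $\tr_{\omega_j}\bar\omega_j=m+|b_j|_{g_j}^2\leq m+2Q$. By \eqref{eq:q-volume-ratio} and \eqref{eq:q-bound}, $\omega_j^m/\bar\omega_j^m=e^{-q_j}\in[e^{-Q},1]$. A trace bound together with a two-sided determinant bound gives two-sided equivalence: all eigenvalues of $\bar\omega_j$ relative to $\omega_j$ are at most $C$, and their product is at least $1$. Hence $\omega_j$ and $\bar\omega_j$ are uniformly equivalent on all of $[0,\infty)\times M$, and it suffices to prove \eqref{eq:uniform-equivalence}, or equivalently the statement for $\omega_j$.

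Next I would control the volume form. By \eqref{eq:Hermitian-volume} and \eqref{eq:manifold-scalar-flow},
\[
\log\frac{\bar\omega_j^m}{\omega_0^m}=\dot\varphi_j-q_j-2\varphi_j+2h_0 .
\]
On $[\frac{\varepsilon}{2},S]$ this is bounded independently of $j$ by \eqref{eq:C0-phi}, Lemma~\ref{lem:time-derivative-bound} and \eqref{eq:q-bound}. So again it is enough to prove a one-sided bound, namely $\tr_{\omega_j}\omega_0\leq C_{\varepsilon,S}$ on $[\varepsilon,S]\times M$. The other directions then follow from the eigenvalue argument, applied once between $\omega_j$ and $\omega_0$ and once between $\omega_j$ and $\bar\omega_j$.

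For the bound on $\tr_{\omega_j}\omega_0$ I would use a parabolic Chern--Lu (Schwarz lemma) argument with the operator $\mathcal L_j=\partial_s-\Delta^C_{g_j}$. The first step is a differential inequality
\[
\mathcal L_j\log\tr_{\omega_j}\omega_0\leq C_0\,\tr_{\omega_j}\omega_0+C_0 .
\]
Here $C_0$ depends only on a bisectional curvature bound for $\omega_0$ and on the normalization constant $2$. I expect the torsion terms coming from the Bismut--Ricci flow either to have a favourable sign or to be absorbed, as in the Schwarz lemma for pluriclosed flow of Streets--Tian and Jordan--Streets. As a fallback, the torsion could be controlled through $|b_j|^2\leq 2Q$: in the symplectic-type setting the torsion is expressed through $\partial\omega_J=\bar\partial\beta^{\rm JS}$ by \eqref{eq:JS-beta-conversion}.

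With $s_0=\frac{\varepsilon}{4}$ and $\tau=s-s_0$, I would then consider
\[
G=\tau\log\tr_{\omega_j}\omega_0-A\varphi_j .
\]
By \eqref{eq:phi-evol}, \eqref{eq:bartrace} and Lemma~\ref{lem:time-derivative-bound},
\[
\mathcal L_j(-A\varphi_j)=-A\dot\varphi_j+A\tr_{\omega_j}\bar\omega_j-A\tr_{\omega_j}\omega_0\leq C_A-A\tr_{\omega_j}\omega_0 .
\]
The extra term $\log\tr_{\omega_j}\omega_0\leq\tr_{\omega_j}\omega_0$ coming from $\partial_s\tau$ is absorbed as well. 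Choosing $A>S C_0+1$ gives $\mathcal L_jG\leq C-\tr_{\omega_j}\omega_0$.

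At an interior maximum with $\tau>0$ this forces $\tr_{\omega_j}\omega_0\leq C$. At $\tau=0$, $G=-A\varphi_j(s_0)$ is bounded by \eqref{eq:C0-phi}. The maximum principle and \eqref{eq:C0-phi} then give $\tau\log\tr_{\omega_j}\omega_0\leq C$, hence the desired bound for $s\geq\varepsilon$. All constants are independent of $j$ because only \eqref{eq:C0-phi}, \eqref{eq:dot-bound}, $Q$ and the background $\omega_0$ enter.

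The main obstacle is the Chern--Lu inequality in the non-K\"ahler setting, specifically controlling the Chern-torsion terms in $\Delta^C_{g_j}\tr_{\omega_j}\omega_0$. If that proves awkward, I would instead exploit the toric reduction. By Proposition~\ref{prop:ASU-scalar-reduction}, $\phi_j$ solves the equation $\partial_s\phi=-\log\det(H^{-1}+\ii B)+2\phi$, which is real and Hessian-type. I would run a Pogorelov/Aubin--Yau type maximum principle for $\log\tr_{\omega_0}\bar\omega_j-A\varphi_j$ with the linearized operator, whose coefficients are $\operatorname{Re}\bigl((H^{-1}+\ii B)^{-1}\bigr)$-type. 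Near $\partial P$, i.e. near the toric divisors, I would handle smoothness through the compactification as in Lemma~\ref{lem:facet}.
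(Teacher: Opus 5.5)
Your argument is correct, but it runs the trace estimate in the opposite direction from the paper.

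\textbf{The paper's route.} The paper bounds $\tr_{\omega_0}\omega_j$ (Aubin--Yau direction), using the inequality $\mathcal L_j\log\tr_{\omega_0}\omega_j\leq|T_j^c|^2+C\tr_{\omega_j}\omega_0+C$ quoted from Apostolov--Streets--Ustinovskiy. There the torsion enters with the wrong sign. It is absorbed by adding $2\tau|b_j|^2$ to the test function and using $\mathcal L_j|b_j|^2\leq-|T_j^c|^2$. In addition, the term $\log\tr_{\omega_0}\omega_j$ produced by $\partial_s\tau$ must be traded for $\tr_{\omega_j}\omega_0$ via the volume bound and $\tr_{\omega_0}\omega_j\leq C(1+\tr_{\omega_j}\omega_0)^{m-1}$.

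\textbf{Your route.} You bound $\tr_{\omega_j}\omega_0$ instead, and your expectation about the torsion is right. Write the unnormalized pluriclosed flow as $\partial_tg=-\mathcal S+\Theta$, where $\mathcal S_{i\bar j}=g^{k\bar l}R^g_{k\bar l i\bar j}$ is the second Chern--Ricci curvature and $\Theta_{i\bar j}=g^{k\bar l}g^{p\bar q}T_{ik\bar q}\overline{T_{jl\bar p}}\geq0$ (Streets--Tian). With $\Upsilon=\nabla^{\omega_0}-\nabla^C_g$, a direct computation gives, up to the factor $2$ of the paper's conventions,
\[
 (\partial_t-\Delta^C_g)\tr_g\omega_0
 =-|\Upsilon|^2_{g^{-1},g^{-1},g_0}-\langle\Theta,\omega_0\rangle_g
 +g^{k\bar l}g^{i\bar j}R^{\omega_0}_{k\bar l i\bar j}.
\]
The $\mathcal S$-terms from $\partial_t$ and from $\Delta^C_g$ cancel, and no torsion cross terms occur. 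The estimate $|\partial\tr_g\omega_0|_g^2\leq\tr_g\omega_0\,|\Upsilon|^2_{g^{-1},g^{-1},g_0}$ then yields your inequality, with $C_0$ depending only on an upper bound for the bisectional curvature of $\omega_0$. The normalization contributes $-2\tr_{\omega_j}\omega_0$, which only helps.

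\textbf{Comparison.} In your route the maximum-principle step needs neither the $|b_j|^2$ absorption nor the trace--determinant interpolation. This is because $\log\tr_{\omega_j}\omega_0\leq\tr_{\omega_j}\omega_0$ is absorbed directly by $-A\tr_{\omega_j}\omega_0$. The cost is that you must carry out this Chern--Lu computation yourself instead of citing ASU's evolution inequalities. Both proofs then recover the other side from the two-sided volume bound. Your observation that $\omega_j$ and $\bar\omega_j$ are uniformly equivalent for all times (from $\tr_{\omega_j}\bar\omega_j\leq m+2Q$ and $\bar\omega_j^m/\omega_j^m=e^{q_j}\geq1$) is the same device the paper uses at the end.

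\textbf{Smaller remarks.}
Your stated fallback would not work as written. The pointwise bound $|b_j|^2\leq2Q$ gives no control of $|T_j^c|$, which involves a derivative of $\beta^{\rm JS}$ through $\bar\partial\beta^{\rm JS}=\partial\omega_J$. The usable mechanism is the evolution inequality $\mathcal L_j|b_j|^2\leq-|T_j^c|^2$, which is exactly the paper's device and is unnecessary in your direction.

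With $s_0=\frac{\varepsilon}{4}$ you need \eqref{eq:dot-bound} on $[\frac{\varepsilon}{4},S]$, i.e.\ Lemma~\ref{lem:time-derivative-bound} applied with $\frac{\varepsilon}{2}$ in place of $\varepsilon$.

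To reach $\mathcal L_jG\leq C-\tr_{\omega_j}\omega_0$ after using $\log x\leq x$, you should take $A\geq SC_0+2$.
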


\begin{proof}
 Let $T_j^c$ denote the torsion tensor of the Chern connection of
 the Hermitian metric $(g_j,J)$.
We use the following evolution inequalities
\cite[Equations~(4.15)--(4.16)]{ASU-toric}:
 \begin{equation}\label{eq:trace-evol}
  \mathcal L_j\log\tr_{\omega_0}\omega_j
  \leq|T_j^c|^2+C\tr_{\omega_j}\omega_0+C,
  \qquad
  \mathcal L_j|b_j|^2\leq-|T_j^c|^2.
 \end{equation}
\begingroup
Set $\tau=s-\frac{\varepsilon}{2}$ and consider 
 \[
 \mathcal Q_j
 =\tau\log\tr_{\omega_0}\omega_j
 +2\tau|b_j|^2-K\varphi_j-C\tau,
 \]
 where $K,C>0$ will be chosen below.
From equations~\eqref{eq:phi-evol} and \eqref{eq:trace-evol}, we compute
 \[
 \begin{split}
  \mathcal L_j\mathcal Q_j
  \leq{}&
  \log{\tr_{\omega_0}\omega_j}
  -\tau|T_j^c|^2
  -\bigl(K-C_0\tau\bigr)
    \tr_{\omega_j}\omega_0\\
  &+2|b_j|^2-K\dot\varphi_j
    +K\tr_{\omega_j}\bar\omega_j+C_0\tau-C,
 \end{split}
 \]
where $C_0$ bounds the fixed constants in \eqref{eq:trace-evol}.
Choose $K>2C_0S+1$ and enlarge $C$.
Using \eqref{eq:bartrace}, \eqref{eq:dot-bound}, and the bound
on $b_j$, we obtain
 \begin{equation}\label{eq:trace-test}
  \mathcal L_j\mathcal Q_j
  \leq\log{\tr_{\omega_0}\omega_j}-c\,\tr_{\omega_j}\omega_0+C.
 \end{equation}

Combining equations~\eqref{eq:Hermitian-volume}, \eqref{eq:C0-phi},
\eqref{eq:dot-bound}, and \eqref{eq:q-bound}, we obtain
$C^{-1}\leq\frac{\omega_j^m}{\omega_0^m}\leq C$.
The elementary trace--determinant inequality therefore yields
\[
 \tr_{\omega_0}\omega_j
 \leq C(1+\tr_{\omega_j}\omega_0)^{m-1},\qquad
 \log\tr_{\omega_0}\omega_j
 \leq \eta\,\tr_{\omega_j}\omega_0+C_\eta
\]
for every $\eta>0$.
At a positive-time maximum of $\mathcal Q_j$, choose $\eta<c$
in \eqref{eq:trace-test}. Then
$0\leq C-(c-\eta)\tr_{\omega_j}\omega_0$, which bounds both
traces at the maximum point. The other terms in $\mathcal Q_j$ are bounded
by \eqref{eq:C0-phi} and \eqref{eq:q-bound}.
At $\tau=0$, we have $\mathcal Q_j=-K\varphi_j$, so its maximum
on the whole cylinder is uniformly bounded.
Since $\tau\geq\frac{\varepsilon}{2}$ on $[\varepsilon,S]$,
we obtain $\tr_{\omega_0}\omega_j\leq C$ on
$[\varepsilon,S]\times M$.
The reverse metric inequality follows from the determinant lower bound.

 By \eqref{eq:bartrace},
 $\bar\omega_j\leq C\omega_j$, while
 \[
  \log\frac{\bar\omega_j^m}{\omega_0^m}
  =\dot\varphi_j-2\varphi_j+2h_0-q_j
 \]
 is bounded above and below.  The upper metric bound and the
 determinant lower bound for $\bar\omega_j$ prove
 \eqref{eq:uniform-equivalence}.
\endgroup
\end{proof}

\begingroup
By the $C^2$-estimate in
Lemma~\ref{lem:uniform-metric-equivalence}, we can derive
higher-order estimates and then finish the proof of
Proposition~\ref{prop:uniform-smoothing}.
\endgroup

\begingroup
\begin{proof}[Proof of Proposition~\ref{prop:uniform-smoothing}]
By Lemma~\ref{lem:uniform-metric-equivalence} and \eqref{eq:q-bound},
$g_j$ is uniformly equivalent to the reference metric and
$\beta_j=-\ii\pi_J^{2,0}b_j$ is uniformly bounded on
$[\frac{\varepsilon}{2},S]\times M$.
Using formula~\eqref{eq:JS-generalized-metric-preliminary}, we obtain
\[
 C^{-1}\widetilde{\mathcal G}\leq\mathcal G_j
 \leq C\widetilde{\mathcal G},
\]
where $\widetilde{\mathcal G}$ is the direct-sum metric induced by
$\omega_0$.
Rescale from $s=\frac{\varepsilon}{2}$ as in
Lemma~\ref{lem:JS-convention}; the same equivalence holds on this
finite interval. Applying Theorem~\ref{thm:JS-local} on a finite
coordinate cover of $M$, we obtain uniform bounds for all spatial
derivatives of $g_j$ and $\beta_j$ on $[\varepsilon,S]\times M$.
The mixed space--time bounds follow from the flow equations.

By \eqref{eq:phi-evol} and \eqref{eq:bartrace},
\[
 \Delta^C_{g_j}\varphi_j
 =m+|b_j|_{g_j}^2-\tr_{\omega_j}\omega_0=:f_j.
\]
Since $b_j=\ii(\beta_j-\overline{\beta_j})$, these estimates also
bound $f_j$ uniformly in every $C^{k,\alpha}(M)$-norm.
The uniform ellipticity of $\Delta^C_{g_j}$ and Schauder estimates,
together with \eqref{eq:C0-phi}, therefore yield
\[
 \|\varphi_j(s)\|_{C^{k+2,\alpha}(M)}
 \leq C\bigl(\|f_j(s)\|_{C^{k,\alpha}(M)}
             +\|\varphi_j(s)\|_{C^0(M)}\bigr)
 \leq C_{\varepsilon,S,k,\alpha}
\]
for every $k\geq0$, $0<\alpha<1$, and $s\in[\varepsilon,S]$,
with constants independent of $j$.
Finally, the time-derivative bounds follow from the scalar equation
\eqref{eq:manifold-scalar-flow} and the volume identity
\eqref{eq:q-volume-ratio}, completing the proof of \eqref{eq:positive-time}.
\end{proof}
\endgroup

\begingroup
By Proposition~\ref{prop:uniform-smoothing}, to finish the proof of
Theorem~\ref{thm:conditional-weak-B-flow} it remains to show that
the limiting solution attains the prescribed initial data and to
prove the uniqueness.

\begin{proof}[Completion of the proof of Theorem~\ref{thm:conditional-weak-B-flow}]
Applying Proposition~\ref{prop:uniform-smoothing} with $B_{0,j}=B_0$,
we obtain smooth precompactness on compact subsets of
$(0,\infty)\times M$.
The bound $q_j\leq Q$ and \eqref{eq:q-b} imply
$s_\alpha^2\leq1-e^{-Q}$ for each skew-symmetric block.  Hence the smallest
eigenvalue of $I+\ii H_j^{\frac12}B_tH_j^{\frac12}$ is at least
$1-\sqrt{1-e^{-Q}}>0$.  Together with the metric bounds, this
shows that every smooth limit remains admissible.  The same bound
holds on each face by continuity up to the toric boundary.
Thus any subsequential limit $\varphi(t)$ solves
\eqref{eq:manifold-scalar-flow} with $B_t=e^{-2t}B_0$.

Next we use the comparison in Lemma~\ref{lem:translated-comparison}
to prove the convergence to the prescribed initial data.
Let $\chi(t)$ be the normalized weak KR flow with
initial potential $\varphi_0$.  Passing to the limit in the
comparison for the smooth approximations gives
\begin{equation}\label{eq:weak-B-KRF-sandwich}
 \chi(t)\leq\varphi(t)
 \leq\chi(t)+\frac Q2(e^{2t}-1),
\end{equation}
by Lemma~\ref{lem:translated-comparison} and
Corollary~\ref{cor:DNL-normalized}.
Since $\varphi_0\in\mathcal E^1_\T$, the potentials $\chi(t)$
converge to $\varphi_0$ in $L^1(M,\omega_0^m)$ and their
Aubin--Mabuchi energies converge as $t\to0^+$
\cite[Proposition~5.2]{GuedjZeriahiKRF}.
Put $v(t)=(\bar\phi_0+\chi(t))^*$.
By convexity and Legendre duality, we have local uniform convergence
$v(t)\to u_\infty$ on $P^\circ$.
Supporting affine functions at an interior point provide a common
lower bound on $P$, while convergence of the integrals follows from
\eqref{eq:toric-AM-energy}.  Scheff\'e's lemma therefore yields
$v(t)\to u_\infty$ in $L^1(P)$.
Taking Legendre transforms in \eqref{eq:weak-B-KRF-sandwich},
we obtain
\[
 v(t)-\frac Q2(e^{2t}-1)\leq u(t)\leq v(t).
\]
Consequently, as $t\to0^+$,
$u(t)\to u_\infty$ in $L^1(P)$, and
\eqref{eq:weak-B-KRF-sandwich} also implies
$\varphi(t)\to\varphi_0$ in $L^1(M,\omega_0^m)$.

Now we prove the uniqueness within the class of approximation limits.
Let $\varphi$ and $\psi$ be two approximation limits with the same
initial data $(\varphi_0,B_0)$, and choose $Q$ common to the two
approximating sequences.  By \eqref{eq:weak-B-KRF-sandwich}, both
limits lie between the same $\chi(t)$ and
$\chi(t)+\frac Q2(e^{2t}-1)$.  Hence, for every $\delta>0$,
\[
 \|\varphi(\delta)-\psi(\delta)\|_{C^0(M)}
 \leq\frac Q2(e^{2\delta}-1).
\]
For a fixed $B$, the operator
$H\mapsto-\log\det(H^{-1}+\ii B)$ is increasing on ordered
admissible Hessians, and its linearization is the Chern Laplacian
of the reconstructed metric~\cite[Lemma~4.7]{ASU-toric}.
Since both solutions have the same parameter $B_t=e^{-2t}B_0$,
the maximum principle on $[\delta,t]\times M$ gives
\[
 \begin{aligned}
 \|\varphi(t)-\psi(t)\|_{C^0(M)}
 &\leq e^{2(t-\delta)}
       \|\varphi(\delta)-\psi(\delta)\|_{C^0(M)}\\
 &\leq\frac Q2e^{2(t-\delta)}(e^{2\delta}-1).
 \end{aligned}
\]
Letting $\delta\to0^+$ proves $\varphi=\psi$.
The full approximating sequence therefore converges by
the precompactness and the uniqueness.
\end{proof}

The following corollary will be used in the proof of
Theorem~\ref{thm:main} in the next section.

\begin{corollary}\label{cor:vanishing-B-regularization}
Assume \eqref{eq:uniform-initial-convergence} and
\eqref{eq:Q0-def} hold, and suppose that $B_{0,j}\to0$.
Then $\varphi_j$ converges in
$C^\infty_{\mathrm{loc}}((0,\infty)\times M)$ to the normalized
weak KR flow with initial potential $\varphi_0$.
\end{corollary}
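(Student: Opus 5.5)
Proposition~\ref{prop:uniform-smoothing} applies verbatim, since its hypotheses \eqref{eq:uniform-initial-convergence} and \eqref{eq:Q0-def} allow $B_{0,j}$ to vary with $j$. It gives $j$-independent bounds
\[
 \|\varphi_j\|_{C^k([\varepsilon,S]\times M)}\leq C_{\varepsilon,S,k}
\]
for all $0<\varepsilon<S<\infty$. By Arzel\`a--Ascoli and a diagonal argument, every subsequence has a further subsequence converging in $C^\infty_{\mathrm{loc}}((0,\infty)\times M)$ to some $\varphi$. The plan is to identify every such limit with the normalized weak KR flow $\chi$ from Corollary~\ref{cor:DNL-normalized} with initial potential $\varphi_0$. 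Uniqueness of the limit then gives convergence of the whole sequence.

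To identify the limit, pass to the limit in the scalar equation \eqref{eq:manifold-scalar-flow}. Since $B_j(s)=e^{-2s}B_{0,j}\to0$ uniformly in $s$, and the Hessians $H_j=D^2_y(\bar\phi_0+\varphi_j)$ converge locally smoothly on the dense orbit, we get $q_j=-\log\det(I+\ii H_jB_j)\to0$ there. We also need $q_j\to0$ on all of $M$, which is where care is needed. Lemma~\ref{lem:uniform-metric-equivalence} gives $\bar\omega_j\leq C\omega_0$ on $[\varepsilon,S]\times M$, and hence $|H_j^{1/2}B_jH_j^{1/2}|\leq C|B_j|$ in the torus directions, uniformly up to the toric boundary. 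Alternatively, \eqref{eq:q-b} combined with the smooth convergence of $\omega_j$ and $\bar\omega_j$ works, since the quotient $\omega_j^m/\bar\omega_j^m=e^{-q_j}$ is a smooth function on $M$ converging locally uniformly on the dense orbit and uniformly bounded in $C^1$. Either way $q_j\to0$ uniformly, and the limit $\varphi$ is a smooth solution of \eqref{eq:normalized-KRF} on $(0,\infty)\times M$ with background $(\omega_0,h_0)$.

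It remains to check the initial condition, which I expect to be the main step. Let $\psi_j$ be the KR flows from Lemma~\ref{lem:translated-comparison}, which satisfy the sandwich \eqref{eq:comparison},
\[
 \psi_j\leq\varphi_j\leq\psi_j+\tfrac Q2(e^{2s}-1).
\]
By \eqref{eq:Legendre-L1-convergence}, $\psi_j(0)\to\varphi_0$ in $L^1$, and Corollary~\ref{cor:DNL-normalized} gives $\psi_j\to\chi$ locally smoothly. Letting $j\to\infty$ yields
\[
 \chi\leq\varphi\leq\chi+\tfrac Q2(e^{2t}-1).
\]
Hence $\varphi(t)\to\varphi_0$ in $L^1(M,\omega_0^m)$ as $t\to0^+$, because $\chi(t)\to\varphi_0$ in $L^1$ by \cite[Proposition~5.2]{GuedjZeriahiKRF}. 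Thus $\varphi$ is a weak solution of \eqref{eq:normalized-KRF} attaining the initial datum $\varphi_0$.

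We could now invoke the uniqueness part of Theorem~\ref{thm:DNL-original} after the change of variables \eqref{eq:DNL-exact-potential-change}. A more direct route uses the maximum principle for the KR flow itself. Both $\varphi$ and $\chi$ solve \eqref{eq:normalized-KRF} on $[\delta,t]$, and $\|\varphi(\delta)-\chi(\delta)\|_{C^0}\leq\frac Q2(e^{2\delta}-1)$. Comparing gives
\[
 \|\varphi(t)-\chi(t)\|_{C^0}\leq\tfrac Q2e^{2(t-\delta)}(e^{2\delta}-1),
\]
and letting $\delta\to0^+$ shows $\varphi=\chi$.
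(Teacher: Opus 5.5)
Your proposal is correct and follows essentially the same route as the paper. It uses the uniform positive-time bounds from Proposition~\ref{prop:uniform-smoothing}, gets $q_j\to0$ from $B_j(s)\to0$, passes to the limit in the sandwich \eqref{eq:comparison} via Corollary~\ref{cor:DNL-normalized}, and identifies every subsequential limit with $\chi$. Your alternative maximum-principle identification is the argument the paper uses for uniqueness in the completion of the proof of Theorem~\ref{thm:conditional-weak-B-flow}; here the paper instead cites the uniqueness in Corollary~\ref{cor:DNL-normalized}.
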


\begin{proof}
By Proposition~\ref{prop:uniform-smoothing}, we obtain smooth
positive-time precompactness and a K\"ahler limit
$\omega_0+\ddcJ\varphi_*$.
The Hessian entries
$(H_j)_{ab}=\bar\omega_j(K_a,JK_b)$ have uniform derivative
bounds on each compact positive-time cylinder.
Since $B_j(s)=e^{-2s}B_{0,j}\to0$, it follows that
$q_j=-\log\det(I+\ii H_jB_j)\to0$ smoothly there.
Thus \eqref{eq:manifold-scalar-flow} passes to the normalized
K\"ahler--Ricci flow equation.
Let $\chi$ be its weak solution with initial potential
$\varphi_0$.  By Lemma~\ref{lem:translated-comparison} and
Corollary~\ref{cor:DNL-normalized},
\[
 0\leq\varphi_*(s)-\chi(s)\leq\frac Q2(e^{2s}-1).
\]
Hence $\varphi_*(s)\to\varphi_0$ in $L^1(M,\omega_0^m)$ as
$s\to0^+$.  By the uniqueness in
Corollary~\ref{cor:DNL-normalized}, $\varphi_*=\chi$.
Precompactness then implies convergence of the whole sequence.
\end{proof}
\endgroup

\section{Proof of Theorem \ref{thm:main}}
\label{sec:main-proof}
\label{sec:regularization-rigidity}

\begingroup
In this section we prove Theorem~\ref{thm:main} by showing the
convergence of the triples
$(\tilde{\omega}_t,\widetilde I_t,\widetilde b_t)$ modulo $\TC$.

Recall that Proposition~\ref{prop:toric-compactness} gives
compactness of the normalized symplectic potentials in $L^1(P)$.
Applying Corollary~\ref{cor:vanishing-B-regularization} and using
the dissipation formula \eqref{eq:exact-dissipation} together with
Corollary~\ref{cor:finite-D}, we get the following result.
\endgroup

\begingroup
\begin{proposition}\label{prop:tail-KRS}
For every sequence $t_j\to\infty$, there are a subsequence and
$\tau_j\in\TC$ such that
\[
 \tau_j^*\bar\omega_{t_j+s}\longrightarrow\bar\omega_\infty(s)
 \quad\text{in }C^\infty_{\mathrm{loc}}((0,\infty)\times M).
\]
Every time slice $\bar\omega_\infty(s)$ is a KR soliton.
\end{proposition}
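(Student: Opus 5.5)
Given $t_j\to\infty$, I would first apply Proposition~\ref{prop:toric-compactness} to pass to a subsequence and choose $\tau_j\in\TC$ such that the normalized potentials $u_{\widehat\varphi_j}$, with $\widehat\varphi_j=\tau_j[\varphi_{t_j}]$, converge in $L^1(P)$ to some $u_\infty$ with $\varphi_\infty\in\mathcal E^1_\T$. Next I would view the time-translated flows $s\mapsto \tau_j^*$ of the flow started at time $t_j$ as new solutions of \eqref{eq:intro-flow} with initial data $(u_{\widehat\varphi_j},0,B_{t_j})$. Here $B_{t_j}=e^{-2t_j}B_0\to0$. The $\TC$-pullback only adds an affine function to $u$ by \eqref{eq:torus-action-Legendre}, and by \eqref{eq:explicit-B-reconstruction} it preserves $B$. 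I would also check the bound \eqref{eq:Q0-def}: $\sup_M|b_{t_j}|^2_{g_{t_j}}\le\sup_M|b_0|^2_{g_0}$ holds by the maximum principle of \cite[Proposition~4.10]{ASU-toric}, and this bound is invariant under pullback. The additive constant in $\varphi$ is harmless, since \eqref{eq:manifold-scalar-flow} fixes it and the normalization by $E_{\rm AM}$ only shifts by a constant whose evolution is controlled.

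With this set up, Corollary~\ref{cor:vanishing-B-regularization} applies directly. It gives $C^\infty_{\rm loc}((0,\infty)\times M)$ convergence of $\tau_j^*\bar\omega_{t_j+s}$ to $\bar\omega_\infty(s)$, the normalized weak Kähler--Ricci flow starting from $\varphi_\infty$, which is smooth for $s>0$.

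It remains to identify each slice as a soliton, and for this I would use Corollary~\ref{cor:finite-D}. Fix $0<a<b$. Then $\int_a^b\cD(t_j+s)\,\dd s\le\int_{t_j}^\infty\cD\to0$. By $\TC$-invariance, the integrand $\cD$ can be computed from the pulled-back data. In particular $p=\dot u-\langle\xi,x\rangle$ changes only by an affine function, and I would need to check that this function is constant in $s$, so that $\nabla p$ is unchanged up to a fixed shift absorbed by the normalization; indeed $\tau_j$ is independent of $s$. Using only the first (nonnegative) term of $\cD$, we get $\int_a^b\int_P\rho_\xi U^{-1}(\nabla p,\nabla p)\,\dd x\,\dd s\to0$. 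On compact subsets of $P^\circ$, the smooth convergence of $\bar\omega$ gives smooth convergence of $u$, $U$ and $\dot u$. Fatou's lemma then yields $\nabla p_\infty\equiv0$, i.e. $\dot u_\infty-\langle\xi,x\rangle$ is constant for every $s$.

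Substituting into the limiting equation $\dot u=\log\det D^2u+2u-2x\cdot\nabla u$ (the $B$ term vanishes), we find that $\log\det D^2u_\infty+2u_\infty-2x\cdot\nabla u_\infty-\langle\xi,x\rangle$ is constant. This is exactly the toric Kähler--Ricci soliton equation with vector $\xi$, in the normalization of \cite{WangZhouZhu}; equivalently, the flow moves only by the holomorphic vector field generated by $\xi$. Hence each $\bar\omega_\infty(s)$ is a KR soliton.

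The main obstacle is the bookkeeping when transferring $\cD$ under the $\TC$-pullbacks and time-dependent normalizations. Care is needed in identifying the $L^1$-in-time smallness with the pointwise vanishing of $\nabla p_\infty$ in the limit. Fatou's lemma on interior compacta should suffice, because local smooth convergence on $P^\circ$ follows from convergence on the dense orbit.
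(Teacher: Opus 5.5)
Your proposal is correct and follows essentially the same route as the paper. You get $L^1$ compactness modulo $\TC$ from Proposition~\ref{prop:toric-compactness}, positive-time smooth convergence to the weak K\"ahler--Ricci flow from Corollary~\ref{cor:vanishing-B-regularization} using $B_{0,j}=e^{-2t_j}B_0\to0$ and the uniform $b$-bound, and then the vanishing tail $\int_{t_j+a}^\infty\cD\,\dd t\to0$ on interior compacta, which forces $\nabla p_*=0$ and hence the soliton equation.

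Two points are handled more precisely in the paper. First, it proves the smooth convergence $u_j\to u_*$ on $[a,b]\times P_\delta$ via the inverse function theorem for $\nabla_y\phi_j$. Second, it notes that $\nabla p$ is exactly unchanged under the fixed $\tau_j$, since the affine correction has $s$-independent gradient; so there is no shift to absorb.
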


\begin{proof}
{
Fix any sequence $t_j\to\infty$. By
Proposition~\ref{prop:toric-compactness}, we can pass to a
subsequence, still denoted by $(t_j)$, and choose $\tau_j\in\TC$
for which the convergences in \eqref{eq:toric-compactness} hold.}
Keeping $\tau_j$ fixed in $s$, write
\[
 \bar\omega_j(s):=\tau_j^*\bar\omega_{t_j+s}
 =\omega_0+\ddcJ\varphi_j(s),\qquad
 u_j(s)=(\bar\phi_0+\varphi_j(s))^*.
\]
Choose the constants by \eqref{eq:manifold-scalar-flow}, with
$\varphi_j(0)=\tau_j[\varphi_{t_j}]$.
Then $u_j(0)\to u_\infty$ in $L^1(P)$ and
$\varphi_\infty=u_\infty^*-\bar\phi_0\in\mathcal E^1_\T$.
The pullback leaves the type-$B$ matrix unchanged, as noted
after \eqref{eq:explicit-B-reconstruction}.  Thus
\[
 B_j(s)=e^{-2s}B_{0,j},\qquad
 B_{0,j}=e^{-2t_j}B_0\longrightarrow0.
\]
{
Let $(g_j,b_j)$ be the metric and two-form associated to
$(u_j,0,B_j)$ by the toric reconstruction formulas in
Section~\ref{sec:preliminaries}, expressed in the fixed-$J$ gauge.}
By Lemma~\ref{lem:determinant-error},
$\frac12\sup_M|b_j(0)|_{g_j(0)}^2\leq Q$ for a constant
independent of $j$.
Corollary~\ref{cor:vanishing-B-regularization} now yields
\[
 \varphi_j\longrightarrow\varphi_*
 \quad\text{in }C^\infty_{\mathrm{loc}}((0,\infty)\times M),
\]
where $\varphi_*$ is the normalized weak K\"ahler--Ricci flow
from $\varphi_\infty$.
Thus $\bar\omega_\infty(s)=\omega_0+\ddcJ\varphi_*(s)$.

Put $u_*=(\bar\phi_0+\varphi_*)^*$.
Fix $0<a<b<\infty$ and
$P_\delta=\{x\in P:L_r(x)\geq\delta\text{ for every }r\}
\Subset P^\circ$, where $\delta>0$ is small.
\begingroup
Write $\phi_j=\bar\phi_0+\varphi_j$ and
$\phi_*=\bar\phi_0+\varphi_*$. The preceding convergence implies
that $\phi_j\to\phi_*$ locally smoothly in $(s,y)$.
Since the reductions are K\"ahler, their gradient maps are
diffeomorphisms from $\mathbb R^m$ onto $P^\circ$.
Define the inverse maps $y_j(s,x)$ and $y_*(s,x)$ by
\[
 \nabla_y\phi_j(s,y_j(s,x))=x,\qquad
 \nabla_y\phi_*(s,y_*(s,x))=x.
\]
The image of $[a,b]\times P_\delta$ under
$(s,x)\mapsto(s,y_*(s,x))$ is compact. Thus
$D_y^2\phi_*\geq cI$ on a fixed neighborhood of this image for
some $c>0$, and smooth convergence gives
$D_y^2\phi_j\geq\frac c2I$ there for all sufficiently large $j$.
The inverse function theorem with parameter $s$, applied on a
finite cover of this image, therefore gives $y_j\to y_*$ in
$C^\infty([a,b]\times P_\delta)$; strict convexity identifies the
local inverse branches with the global maps $y_j$.
By Legendre duality,
\[
 u_j(s,x)=\langle x,y_j(s,x)\rangle-\phi_j(s,y_j(s,x)).
\]
The smooth convergence of $\phi_j$ and $y_j$ on the relevant
compact sets therefore yields
\[
 u_j\longrightarrow u_*
 \quad\text{in }C^\infty([a,b]\times P_\delta).
\]
\endgroup
Set $U_j=D_x^2u_j$ and
$p_j=\partial_su_j-\langle\xi,x\rangle$, and define $U_*,p_*$
in the same way.
By \eqref{eq:torus-action-Legendre},
$u_j(s)-u_{t_j+s}$ is affine in $x$ with gradient independent
of $s$.  Hence $U_j=D^2u_{t_j+s}$ and
$\nabla p_j=\nabla(\dot u_{t_j+s}-\langle\xi,x\rangle)$.
The dissipation formula~\eqref{eq:exact-dissipation} and
Corollary~\ref{cor:finite-D} imply
\[
 0\leq\int_a^b\!\int_{P_\delta}
 \rho_\xi U_j^{-1}(\nabla p_j,\nabla p_j)\,\dd x\,\dd s
 \leq\int_{t_j+a}^\infty\cD(t)\,\dd t\longrightarrow0.
\]
The integrands converge smoothly.  Since $\rho_\xi>0$ and
$U_*>0$, we obtain $\nabla p_*=0$ on this cylinder and,
letting $\delta\to0^+$, on $[a,b]\times P^\circ$.
Differentiating \eqref{eq:intro-flow} in $x$, we have
\[
 \nabla p_j
 =\nabla\log\det(U_j+\ii B_j)-2U_jx-\xi.
\]
Passing to the limit and integrating in $x$, we find
\[
 \log\det D_x^2u_*+2u_*
 -2x\cdot\nabla u_*-\langle\xi,x\rangle=c(s).
\]
Subtracting $\frac{c(s)}2$ from $u_*(s)$, we obtain the soliton equation
without changing the K\"ahler form. Thus every positive-time slice
is a KR soliton.
\end{proof}
\endgroup

\begingroup
The following is due to \cite[Theorem~1.3 and Section~4]{HuangZhu}.

\begin{lemma}\label{lem:toric-local-stability}Let  $\omega_{\KRS}$ be the KR soliton associated to   $\xi$ in  Definition \ref{def:M}.
Fix $0<\alpha<1$.  There is  an $\epsilon_0>0$ with the following
property:  let $\bar\omega_s$ be  the K\"ahler form associated to the K\"ahler reduction equation (\ref{eq:manifold-scalar-flow}) with $B_s=e^{-2s}B_*$ and
$\bar\omega_0=\omega_{\KRS}+\ddcJ\psi_0$.
If, after subtracting a constant,
\[
 \|\psi_0\|_{C^{2,\alpha}}+|B_*|<\epsilon_0,
\]
then there is a smooth family $\sigma_s\in\TC$ such that
$\sigma_s^*\bar\omega_s\to\omega_{\KRS}$ in $C^\infty$ as
$s\to\infty$.
\end{lemma}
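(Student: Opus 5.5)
The plan is a perturbative stability argument around the soliton, in the spirit of \cite{HuangZhu}. We treat the reduced flow \eqref{eq:manifold-scalar-flow} as the normalized KR flow plus the error term $q_s$, which by \eqref{eq:q-b} is quadratic in $H_sB_s$ and hence of size $O(e^{-4s}|B_*|^2)$ as long as $H_s$ stays bounded. First we rewrite the equation with respect to the background $\omega_{\KRS}$. Put $\bar\omega_s=\omega_{\KRS}+\ddcJ\psi_s$ and pass to the $\xi$-modified gauge: pull back by the flow of $-\frac12 J\xi$ (an element of $\TC$) and absorb $\xi(\psi)$. This makes $\psi\equiv0$ a stationary point of the resulting equation
\[
 \partial_s\psi=\log\frac{(\omega_{\KRS}+\ddcJ\psi)^m}{\omega_{\KRS}^m}+2\psi-\xi(\psi)+q_s,
\]
after normalizing constants. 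The linearization at $\psi=0$ is $L=\Delta_\xi+2$ acting on $\T$-invariant functions. Here $\Delta_\xi$ is the drift Laplacian, self-adjoint with respect to $e^{\theta_\xi}\omega_{\KRS}^m$. By the Tian--Zhu/Futaki type spectral analysis, its eigenvalues satisfy $\lambda_1(-\Delta_\xi)\geq2$, and the eigenspace for $\lambda=2$ consists of $\T$-invariant holomorphy potentials. On toric $M$ this eigenspace is spanned by the components $x_\alpha$ of the moment map, i.e.\ it is exactly the tangent space to the $\TC$-orbit, together with constants, which are handled by the normalization.

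Second, we run a modulated flow. At each time we choose $\sigma_s\in\TC$ by the implicit function theorem so that the $\sigma_s$-pulled back potential is weighted-$L^2$ orthogonal to the span of the $x_\alpha$. On the orthogonal complement, $L$ has spectral gap $\lambda_2(-\Delta_\xi)-2>0$ (modulo constants, which grow at rate $2$ but are fixed by the energy normalization $E_{\rm AM}=0$, as in Section~\ref{sec:mabuchi}). We then derive a differential inequality for $\|\psi_s\|$ of the form
\[
 \frac{d}{ds}\|\psi\|\leq-\gamma\|\psi\|+C\|\psi\|^2+Ce^{-4s}|B_*|^2.
\]
Combined with parabolic Schauder estimates, this should give exponential decay in $C^{2,\alpha}$ as long as the solution stays small. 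A continuity/bootstrap argument then shows that $\|\psi_s\|_{C^{2,\alpha}}$ stays below $C\epsilon_0$ for all time, using $\|\psi_0\|_{C^{2,\alpha}}+|B_*|<\epsilon_0$. It also shows $|\dot\sigma_s|\lesssim\|\psi_s\|$, so $\sigma_s$ converges.

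Third, we upgrade to $C^\infty$ using the local estimates, Theorem~\ref{thm:JS-local} applied on unit time intervals. Uniform equivalence of the generalized metric follows from the $C^{2,\alpha}$ smallness and the bound on $q_s$ from Lemma~\ref{lem:determinant-error}.

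The main obstacle is the kernel direction. The $\lambda=2$ modes are neutral for the linearized flow, so the choice of $\sigma_s$ must exactly absorb them. We also have to check that the perturbation $q_s$ does not feed these modes in a non-integrable way. This works because $q_s$ decays like $e^{-4s}$, which is integrable, and because $\T$-invariance excludes any further holomorphy potentials.
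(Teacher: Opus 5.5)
Your proposal is correct in outline, but it takes a genuinely different route from the paper. The paper does not prove stability itself. It quotes Huang--Zhu's stability theorem (Theorem~1.3 there) as a black box; that theorem already handles the $B$-perturbed reduced flow and supplies a normalized family $T_{b_s}^*\bar\omega_s$, precompact in $C^\infty$, whose limits are KR solitons. The paper then notes that on $\T$-invariant potentials the normalization can be done by logarithmic translations, because the kernel modulo constants is spanned by the $x_\alpha$. It replaces $b_s$ by the explicit choice $a_s=\nabla u_s(0)-\nabla u_{\KRS}(0)$, so $\sigma_s=T_{a_s}$ is smooth in $s$ for free. Since $a_s-b_s=O(1)$, precompactness survives. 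Every subsequential limit is a soliton whose Legendre potential has the same gradient at $0$ as $u_{\KRS}$, so it equals $\omega_{\KRS}$ by the uniqueness in Proposition~\ref{prop:M-variational-structure}. This is a compactness-plus-uniqueness argument and gives no rate.

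You instead linearize at $\omega_{\KRS}$ in the $\xi$-rotating gauge, fix $\sigma_s$ by weighted orthogonality to the $x_\alpha$, and prove decay directly. This proves stability and pins down the limit at the same time, so you never need global uniqueness of the toric soliton, and it yields an exponential rate. The price is that you must carry out the nonlinear estimates the paper outsources.
\begin{itemize}
\item Since $q_s=-\log\det(I+\ii H_sB_s)$ depends on $\ddcJ\psi$ through $H_s$, it enters as a small fully nonlinear perturbation of the operator, of size $O(e^{-4s}|B_*|^2)$, not as a pure source term.
\item Your schematic inequality for $\|\psi\|$ only closes as a bootstrap once you fix the norm, for instance weighted $L^2$ with the spectral gap, and absorb the quadratic second-derivative terms by interpolating against uniform higher-order bounds.
\item In the smoothing step, Theorem~\ref{thm:JS-local} applies to the unmodified flow, not to the flow pulled back by the time-dependent $\sigma_s$ (whose $\xi$-drift is unbounded). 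On each interval $[s_0,s_0+1]$ you should pull back by the frozen element $\sigma_{s_0}$, which preserves $B_s$ and the equation, and use that $\sigma_{s_0}^{-1}\sigma_s$ stays bounded there.
\end{itemize}
In effect you are reproving the Huang--Zhu stability theorem in the toric $B$-perturbed setting, which the paper takes as given.
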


\begin{proof}
By the stability theorem \cite[Theorem~1.3]{HuangZhu},
it suffices to show that the normalizing automorphisms can be
chosen smoothly in $\TC$. Their stability argument yields a
normalized family $\widehat\omega_s$, precompact in
$C^\infty(M)$, whose limits as $s\to\infty$ are KR solitons.
For torus-invariant potentials, the kernel of the linearized
soliton equation modulo constants is spanned by the moment-map
components. Thus the normalization in
\cite[Lemma~4.7 and Proposition~4.8]{HuangZhu} can be performed
by logarithmic translations $T_a:y\mapsto y+a$.
They preserve $B_s=e^{-2s}B_*$ and
commute with the soliton vector field. We may therefore choose
this normalization so that
$\widehat\omega_s=T_{b_s}^*\bar\omega_s$ with
$b_s\in\mathfrak t$. Let $u_s$ denote the Legendre potential of
$\bar\omega_s$, and write
$v_s=u_s-\langle b_s,x\rangle$ for that of
$\widehat\omega_s$, up to a constant.

To normalize the gradient at the origin, set
\[
a_s=\nabla u_s(0)-\nabla u_{\KRS}(0),\qquad \sigma_s=T_{a_s}.
\]
Then $\sigma_s$ depends smoothly on $s$, and the translated
Legendre potential $u_s-\langle a_s,x\rangle$ has gradient
$\nabla u_{\KRS}(0)$ at the origin.
Since $\nabla v_s$ is the inverse of the normalized moment map
in logarithmic coordinates, the inverse function theorem at the
K\"ahler limits gives $\nabla v_s(0)=O(1)$, as $0\in P^\circ$.
Hence
\[
\begin{aligned}
	a_s-b_s
	&=\nabla v_s(0)-\nabla u_{\KRS}(0)=O(1),\\
	\sigma_s^*\bar\omega_s
	&=T_{a_s-b_s}^*\widehat\omega_s.
\end{aligned}
\]
The bounded additional translations preserve precompactness.
By Proposition~\ref{prop:M-variational-structure}, the gradient
normalization makes every limiting soliton potential differ
from $u_{\KRS}$ only by a constant, so every limiting form is
$\omega_{\KRS}$. Precompactness therefore yields
$\sigma_s^*\bar\omega_s\to\omega_{\KRS}$ in $C^\infty(M)$
as $s\to\infty$.
\end{proof}

By Proposition~\ref{prop:tail-KRS} and the stability result above,
we obtain the following convergence of $\bar\omega_t$.

\begin{proposition}\label{prop:reduction-convergence}
There is a smooth family $\gamma_t\in\TC$ such that
\[
 \gamma_t^*\bar\omega_t\longrightarrow\omega_{\KRS}
 \quad\text{in }C^\infty.
\]
\end{proposition}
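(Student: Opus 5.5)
The plan is to combine the subsequential convergence of Proposition~\ref{prop:tail-KRS} with the local stability of Lemma~\ref{lem:toric-local-stability}. One good time slice, close to $\omega_{\KRS}$ after a torus pullback and with small $B$, already forces convergence of the whole tail of the flow.

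First, pick any sequence $t_j\to\infty$. Proposition~\ref{prop:tail-KRS} gives a subsequence and $\tau_j\in\TC$ such that $\tau_j^*\bar\omega_{t_j+s}\to\bar\omega_\infty(s)$ in $C^\infty_{\mathrm{loc}}((0,\infty)\times M)$, where each slice $\bar\omega_\infty(s)$ is a KR soliton. On a toric Fano manifold the KR soliton is unique modulo $\TC$, by Proposition~\ref{prop:M-variational-structure} together with \eqref{eq:torus-action-Legendre}. Hence $\bar\omega_\infty(1)=\kappa^*\omega_{\KRS}$ for some $\kappa\in\TC$.

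Next, replace $\tau_j$ by $\tau_j\kappa^{-1}$, which still lies in $\TC$. Then $\psi_{0,j}$, defined by $(\tau_j\kappa^{-1})^*\bar\omega_{t_j+1}=\omega_{\KRS}+\ddcJ\psi_{0,j}$ and normalized by a constant, tends to $0$ in $C^{2,\alpha}$. The type-$B$ parameter is unchanged by torus pullbacks, as noted after \eqref{eq:explicit-B-reconstruction}, and equals $e^{-2(t_j+1)}B_0\to0$. So for one fixed large $j$ the hypotheses of Lemma~\ref{lem:toric-local-stability} hold, with $B_*=e^{-2(t_j+1)}B_0$ and initial form $(\tau_j\kappa^{-1})^*\bar\omega_{t_j+1}$.

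We must check that the pulled-back family $s\mapsto(\tau_j\kappa^{-1})^*\bar\omega_{t_j+1+s}$ still solves \eqref{eq:manifold-scalar-flow} with $B_s=e^{-2s}B_*$. This holds because a time-independent $\TC$ pullback adds an affine function to $u$ and leaves $D^2u$ unchanged. By the affine invariance of \eqref{eq:intro-flow} up to renormalizing constants, the translated potentials solve the same equation. The lemma then provides a smooth family $\sigma_s\in\TC$ with $\sigma_s^*(\tau_j\kappa^{-1})^*\bar\omega_{t_j+1+s}\to\omega_{\KRS}$.

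Finally, set $\gamma_t:=\tau_j\kappa^{-1}\sigma_{t-t_j-1}$ for $t\geq t_j+1$ (with composition order matching the pullback convention). For $t\le t_j+1$, interpolate smoothly in $\TC$, which is connected, to obtain a smooth family for all $t$. Smoothness of $\gamma_t$ is inherited from $\sigma_s$.

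The main obstacle is the precise fit with Lemma~\ref{lem:toric-local-stability}. That lemma requires $C^{2,\alpha}$-smallness of the potential after subtracting a constant, while Proposition~\ref{prop:tail-KRS} gives only $C^\infty_{\mathrm{loc}}$ convergence of forms. To pass from forms to potentials, I would use the $\partial\bar\partial$-lemma on $(M,J)$ with reference $\omega_{\KRS}$ together with elliptic estimates for $\Delta_{\omega_{\KRS}}$: form convergence in $C^{k}$ gives potential convergence in $C^{k+1,\alpha}$ modulo constants. I would also make sure that the uniqueness of the soliton modulo $\TC$ is justified in the toric setting, by strict convexity of $\cM_\xi(\cdot,0)$ modulo affine functions.
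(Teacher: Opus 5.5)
Your proposal is correct and follows essentially the same route as the paper. You evaluate Proposition~\ref{prop:tail-KRS} at $s=1$, use uniqueness of the toric soliton modulo $\TC$ to get a fixed torus element taking the limit to $\omega_{\KRS}$, obtain $C^{2,\alpha}$-small potentials by elliptic estimates, use $B_{t_j+1}\to0$ and the invariance of the reduced flow under time-independent torus pullbacks, apply Lemma~\ref{lem:toric-local-stability} for one large $j$, and extend $\gamma_t$ smoothly over the initial finite interval (the paper does this via its logarithm in $\mathfrak t$, you via connectedness of $\TC$; either works).
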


\begin{proof}
Apply Proposition~\ref{prop:tail-KRS} to one sequence
$t_j\to\infty$ and evaluate at $s=1$.
Then $\tau_j^*\bar\omega_{t_j+1}\to\omega_*$ smoothly, where
$\omega_*$ is a KR soliton.
By Proposition~\ref{prop:M-variational-structure} and
\eqref{eq:torus-action-Legendre}, there is a fixed $\eta\in\TC$
such that $\eta^*\omega_*=\omega_{\KRS}$.
The mean-zero potentials of
$\eta^*\tau_j^*\bar\omega_{t_j+1}$ relative to $\omega_{\KRS}$
tend to zero in $C^{2,\alpha}$ by elliptic estimates.
Also $B_{t_j+1}\to0$.
Fix one sufficiently large $j$, and put $T=t_j+1$ and
$\tau=\tau_j\circ\eta$.
Since $\tau$ is independent of time, the family
$\tau^*\bar\omega_{T+s}$ satisfies the same reduced flow with
initial parameter $B_T$ and meets the smallness condition in
Lemma~\ref{lem:toric-local-stability}.
It follows that
$\sigma_s^*\tau^*\bar\omega_{T+s}\to\omega_{\KRS}$ smoothly.
Set $\gamma_t=\tau\circ\sigma_{t-T}$ for all sufficiently large
$t$.  Its logarithm is a smooth curve in $\mathfrak t$, which
can be extended smoothly over the remaining finite time interval.
\end{proof}
\endgroup

\begingroup
We now recover the full bi-Hermitian data from the K\"ahler
reductions. In fact, we can prove the following result.

\begin{proposition}\label{prop:reconstruction-convergence}
Suppose that $\gamma_t\in\TC$ is a smooth family and
\[
 \gamma_t^*\bar\omega_t\longrightarrow\bar\omega_\infty
 \quad\text{in }C^\infty(M),
\]
where $\bar\omega_\infty$ is a K\"ahler form.
Then, with $g_\infty=\bar\omega_\infty(\cdot,J\cdot)$,
\[
 (\gamma_t^*\widetilde g_t,\gamma_t^*\widetilde I_t,
       \gamma_t^*\widetilde b_t)
 \longrightarrow(g_\infty,J,0)
 \quad\text{in }C^\infty(M).
\]
If the reductions converge exponentially with rate $\lambda_0>0$
in every $C^k$-norm, the full data converge with rate
$\min\{\lambda_0,2\}$.
\end{proposition}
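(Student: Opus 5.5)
The plan is to express every piece of the reconstructed bi-Hermitian data $(\widetilde g_t,\widetilde I_t,\widetilde b_t)$ on $(M,J)$ as a globally defined smooth tensor. Each tensor will be built algebraically from three ingredients: the K\"ahler reduction $\bar\omega_t$, its normalized moment map $\mu_t$, and the constant matrix $B_t=e^{-2t}B_0$. Every correction to the K\"ahler data will carry at least one factor of $B_t$. Convergence of $\gamma_t^*\bar\omega_t$ then forces convergence of everything, and the $B$-corrections decay like $e^{-2t}$.

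\emph{Step 1: the metric.} On the dense orbit $\iota_{K_a}\bar\omega_t=-d\mu_t^a$ (up to the sign convention), and $J\,d\mu^a_t$ corresponds to $\sum_b (H_t)_{ab}\,d\theta_b$. Hence \eqref{eq:toric-Hermitian-form} can be rewritten intrinsically as
\[
 \tilde\omega_t=\bar\omega_t+\tfrac12\sum_{a,b}(B_t)_{ab}
 \bigl(d\mu_t^a\wedge d\mu_t^b+Jd\mu_t^a\wedge Jd\mu_t^b\bigr).
\]
The right-hand side extends smoothly to all of $M$ by density and smoothness of both sides. For $\gamma\in\TC$, the normalized moment map of $\gamma^*\bar\omega_t$ is $\mu_t\circ\gamma$, and $\gamma$ is $J$-holomorphic. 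Therefore $\gamma_t^*\tilde\omega_t$ is given by the same formula with $\bar\omega_t$ replaced by $\gamma_t^*\bar\omega_t$. The moment map is recovered from the K\"ahler form through $d\mu^a=-\iota_{K_a}\bar\omega$ and the normalization, so it depends continuously in every $C^k$ on the form. Convergence of $\gamma_t^*\bar\omega_t$ thus gives uniform $C^k$ bounds on $\mu_t\circ\gamma_t$. The correction term is then $O(|B_t|)=O(e^{-2t})$ in each $C^k$, so $\gamma_t^*\widetilde g_t\to g_\infty$. If the reductions converge at rate $\lambda_0$, the total error is $O(e^{-\lambda_0t}+e^{-2t})$, which gives the rate $\min\{\lambda_0,2\}$.

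\emph{Step 2: the symplectic form, $I$, and $b$.} By \eqref{eq:explicit-B-reconstruction} with $\Omega=\bar\omega_t$, the pulled-back symplectic form is
\[
 \gamma_t^*F_t=\gamma_t^*\bar\omega_t+\sum_{a,b}(B_t)_{ab}\,d(\mu_t^a\circ\gamma_t)\wedge d(\mu_t^b\circ\gamma_t).
\]
Here one must check that the pulled-back data in the fixed-$J$ gauge are exactly these forms, using $\Phi_t^*J_t=J$ and the equivariance of $\Phi_t$. The form $\gamma_t^*F_t$ converges to $\bar\omega_\infty$ with error $O(e^{-2t})$ in every $C^k$. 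Then $\widetilde b=-FJ-\widetilde g$ is an algebraic expression in already-controlled quantities, and it tends to $-\bar\omega_\infty J-g_\infty=0$. For the other complex structure, $\widetilde I=-F^{-1}J^*F$. This requires $F^{-1}$ to be uniformly controlled, which follows because $F$ tames $J$ uniformly: $F(V,JV)=\tilde\omega_t(V,JV)$, and by Step 1 this is uniformly comparable to $g_\infty$ for large $t$. Inverting matrices in $C^k$ then gives $\gamma_t^*\widetilde I_t\to-\bar\omega_\infty^{-1}J^*\bar\omega_\infty=J$, with the same rates.

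\emph{Expected obstacle.} The hardest part will be the bookkeeping that the intrinsic formulas on the dense orbit really describe the fixed-$J$-gauge pullbacks by $\Phi_t$ and extend smoothly across the toric divisors. I would handle this by checking the identity between the two sides on the dense orbit, where both are given explicitly by the matrices in Section~\ref{sec:preliminaries}. Smooth extension then follows because both sides are smooth on $M$. Once that identification is in place, the remaining estimates are routine algebra with a factor of $B_t$ in every correction term.
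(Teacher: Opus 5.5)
In the $A=0$ model your argument is correct and essentially the paper's. There, $F$ is written via \eqref{eq:explicit-B-reconstruction} with $\Omega=\gamma_t^*\bar\omega_t$, and the correction is $O(|B_t|)$ in every $C^k$. Then $\widetilde g$, $\widetilde b$, $\widetilde I$ are read off algebraically from $F$ and $J$.

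\textbf{The gap: you never account for the type-$A$ parameter $A_t=e^{-2t}A_0$.} The proposition concerns the flow from an arbitrary $\T$-invariant initial structure. Such structures are parametrized by $(u,A,B)$, and $A_0$ may be nonzero. Both formulas you rely on are stated in the paper only for the $A=0$ model: \eqref{eq:toric-Hermitian-form} for $\tilde\omega_t$ and \eqref{eq:explicit-B-reconstruction} for $F$. The scalar estimates are done in that model because $A_t$ decouples from $(u_t,B_t)$. The reconstructed tensors $\widetilde g_t$, $\widetilde I_t$, $\widetilde b_t$, however, do depend on $A_t$, since $J_t$ is built from $\Psi_t=U_t+A_t$ rather than $U_t$. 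So your claim that every correction carries a factor of $B_t$ is false in general. The dense-orbit check you planned as the ``expected obstacle'' would not confirm your formulas when $A_0\neq0$. Relative to the logarithmic coordinates of $\bar\omega_t$, the real holomorphic coordinates change by $y\mapsto y+A_t(\mu(y))$.

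The missing step, which the paper supplies, is showing that this type-$A$ correction is also small.
\begin{itemize}
\item The map $y\mapsto y+A_t(\mu(y))$ is induced by the real part of the $\TC$-action with point-dependent parameter $A_t(\mu)$. It therefore extends to a smooth self-map of $M$ across the toric divisors.
\item Since the moment maps $\mu$ of $\gamma_t^*\bar\omega_t$ have uniform $C^k$ bounds, this map and its inverse differ from the identity by $O(|A_t|)$ in every $C^k$ norm.
\item In the fixed-$J$ gauge, $F$ is the pullback of your type-$B$ form by the inverse of this diffeomorphism. Hence
\[
\bigl\|F-\gamma_t^*\bar\omega_t\bigr\|_{C^k}\leq C_k\bigl(|A_t|+|B_t|\bigr)=O(e^{-2t}).
\]
\end{itemize}
With this in hand, drop your Step~1 formula for $\tilde\omega_t$, which is again valid only for $A=0$. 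Obtain the metric instead from $\widetilde g=-(FJ)_{\mathrm{sym}}$, and $\widetilde b$, $\widetilde I$ from $F$ as in your Step~2. The rest of Step~2, including the rate $\min\{\lambda_0,2\}$, then goes through unchanged.
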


\begin{proof}
Recall that the reconstruction formulas in Section~\ref{sec:preliminaries}
commute with pullback by $\TC$, which leaves $A_t,B_t$ unchanged.
We first prove convergence of the associated symplectic forms.
At each time, write $F$ for the symplectic form of the pulled-back
generalized K\"ahler structure and $\mu$ for the normalized moment
map of $\gamma_t^*\bar\omega_t$.
The assumed smooth convergence of the reductions implies smooth
convergence of these moment maps; in particular, all their
derivatives are uniformly bounded.

If $A_t=0$, formula~\eqref{eq:explicit-B-reconstruction} reads
\[
 F=\gamma_t^*\bar\omega_t
       +\sum_{a,b}(B_t)_{ab}\,d\mu^a\wedge d\mu^b.
\]
Thus the difference between $F$ and $\gamma_t^*\bar\omega_t$
is $O(|B_t|)$ in every $C^k$-norm.
For general $A_t$, the type-$A$ construction changes the real
holomorphic coordinates by
\[
 y\longmapsto y+A_t(\mu(y)),
\]
as in \cite[equation~(3.13) and Lemma~3.6]{ASU-toric}.
This coordinate change extends smoothly over $M$: it is induced
by the real part of the global $\TC$-action with the
point-dependent parameter $A_t(\mu)$.
Since the derivatives of $\mu$ are uniformly bounded and
$A_t\to0$, this diffeomorphism and its inverse differ from the
identity by $O(|A_t|)$ in every $C^k$-norm.
In the fixed-$J$ gauge, $F$ is obtained by pulling back the
type-$B$ form displayed above by this inverse coordinate change.
Consequently, for all sufficiently large $t$,
\[
 \bigl\|F-\gamma_t^*\bar\omega_t\bigr\|_{C^k(M)}
 \leq C_k\bigl(|A_t|+|B_t|\bigr)=O(e^{-2t}).
\]
It follows that $F\to\bar\omega_\infty$ in $C^\infty(M)$.

The remaining tensors are recovered from $F$ and the fixed
complex structure $J$ by the algebraic identities
\cite[equation~(1.3)]{ASU-toric}
\[
 \gamma_t^*\widetilde g_t=-(FJ)_{\mathrm{sym}},\qquad
 \gamma_t^*\widetilde b_t=-(FJ)_{\mathrm{skew}},\qquad
 \gamma_t^*\widetilde I_t=-F^{-1}J^*F.
\]
Because $\bar\omega_\infty$ is nondegenerate, inversion is smooth
near the limit. Since it is also $J$-K\"ahler, the right-hand
sides converge respectively to $g_\infty$, $0$, and $J$.
This proves the asserted smooth convergence.
If the reductions converge at rate $e^{-\lambda_0t}$, the estimate
for $F$ and the same identities bound the total error in every
$C^k$-norm by $O(e^{-\lambda_0t})+O(e^{-2t})$, proving the rate
$\min\{\lambda_0,2\}$.
\end{proof}

\begin{proof}[Proof of Theorem~\ref{thm:main}]
By Proposition~\ref{prop:reduction-convergence}, the K\"ahler
reductions converge to $\omega_{\KRS}$ after pullback by $\gamma_t$.
Then Proposition~\ref{prop:reconstruction-convergence} gives
convergence of the full data, proving
\eqref{eq:main-KRS-convergence}, with
$g_{\KRS}=\omega_{\KRS}(\cdot,J\cdot)$.
Here $\gamma_t^*\tilde{\omega}_t
=(\gamma_t^*\widetilde g_t)(J\cdot,\cdot)$, since $\gamma_t^*J=J$.

In case that  the Futaki invariant vanishes,  $\xi=0$ and
$\omega_{\KRS}$ is  a K\"ahler--Einstein metric.
Choose $T$ and $\tau$ as in the proof of
Proposition~\ref{prop:reduction-convergence}, with $T$ sufficiently
large.  By the result  of \cite[Theorem~1.2]{HuangZhu}, there are a fixed
$\sigma\in\Aut_0(M,J)$ and $\lambda_0>0$ such that

\[
 \|\sigma^*\tau^*\bar\omega_t-\omega_{\KRS}\|_{C^k}
 \leq C_k e^{-\lambda_0(t-T)},\qquad t\geq T.
\]
Thus $\bar\omega_t$ converges exponentially to
$\omega_{\KE,\infty}
=(\tau^{-1})^*(\sigma^{-1})^*\omega_{\KRS}$,
which is $\T$-invariant as well as $\bar\omega_t$.
Applying Proposition~\ref{prop:reconstruction-convergence} with
$\gamma_t=\operatorname{id}$, we obtain  the exponential convergences of metrics, complex structures and $b$-fields with rate
$\min\{\lambda_0,2\}$.
\end{proof}
\endgroup

\end{document}